\newcommand{\be}{\begin{eqnarray}}
\newcommand{\ee}{\end{eqnarray}}
\newcommand{\by}{\begin{eqnarray*}}
\newcommand{\ey}{\end{eqnarray*}}
\newcommand{\bn}{\begin{enumerate}}
\newcommand{\en}{\end{enumerate}}
\newcommand{\ei}{\end{itemize}}
\newtheorem{theorem}{Theorem}
\newtheorem{lemma}[theorem]{Lemma}
\newtheorem{assumption}{Assumption}
\newtheorem{remark}[theorem]{Remark}
\newtheorem{definition}[theorem]{Definition}
\renewcommand{\theequation}{\arabic{section}.\arabic{equation}}
\numberwithin{equation}{section}
\begin{document}
\date{}
\title{\bf The Smoluchowski-Kramers approximation for a system with  arbitrary friction depending on both state and distribution \footnote{This work was supported by
the National Natural Science Foundation of China  NSFC No.11771207.
}}
\author{ Xueru Liu\footnote{Department of Mathematics, Nanjing University,
Nanjing, China. dg21210009@smail.nju.edu.cn }\hskip1cm  Qianqian Jiang\footnote{Department of Statistics and Data Science, Southern University of Science and Technology, Shenzhen, China.  Corresponding author, jiangqq@sustech.edu.cn }\hskip1cm Wei Wang\footnote{Department of Mathematics, Nanjing University,
Nanjing, China. wangweinju@nju.edu.cn} \\
}\maketitle
\begin{abstract}
 A system of stochastic differential equations describing diffusive phenomena, which has arbitrary friction depending on both  state and distribution  is investigated. The Smoluchowski-Kramers approximation is seen to describe dynamics in the small mass limit.  We obtain the limiting equation and, in particular, the addition drift terms that appear in the limiting equation are expressed in terms of the solutions to the Lyapunov matrix equation and  Sylvester matrix equation. Furthermore, we provide the rate of convergence  and extend the system to encompass  more general interactions and noise.
\end{abstract}

\textbf{Key Words:} Langevin  matrix equation, Sylvester matrix equations, distribution-dependent  friction,   convergent rate, Smoluchowski-Kramers approximations.

\section{Introduction}
  \setcounter{equation}{0}
  \renewcommand{\theequation}
{1.\arabic{equation}}

 In this paper, we are concerned with  the rigorous derivation of the mass limit for the following  $2d$-dimensional stochastic differential equation (SDE) with both  state-dependent and distribution-dependent  damping:
\begin{eqnarray}\label{maineq}
\left\{\begin{array}{ll}
  {d} \bm{x}_t^{\epsilon} = \bm{v}_{t}^{\epsilon} {d}t,\\
  {d}\bm{v}_{t}^{\epsilon} = \frac{1}{\epsilon}F(\bm{x}_t^{\epsilon}) {d}t-\frac{1}{\epsilon}\bm{\gamma}( \bm{x}_t^{\epsilon},\mathscr{L}_{\bm{x}_t^{\epsilon}}) \bm{v}_{t}^{\epsilon} {d}t+\frac{1}{\epsilon}\bm{\sigma}(\bm{x}_t^{\epsilon}) {d}\bm{W}_{t},
\end{array}
\right.
\end{eqnarray}
where $\bm{F}:\mathbb{R}^d\mapsto\mathbb{R}^d$  is the driving force,   $\bm{\gamma}:\mathbb{R}^d\rightarrow \mathbb{R}^{d\times d}$ is a $d\times d$ invertible matrix-valued function, and $\bm{\sigma}:\mathbb{R}^d\rightarrow \mathbb{R}^{d\times k}$ is a random external force field.   Let $\{\bm{W}_{t}\}_{t\geq0}$ be a $k$-dimensional Wiener process on a complete probability space $(\Omega,\mathcal{F},\mathbb{P})$ with a filtration $(\mathcal{F}_{t})_{t\geq 0}$ being the nature filtration generated by $\bm{W}_{t}$.  By Newton's second law of motion, the solution  of equation~(\ref{maineq}) can be interpreted as the displacement field and velocity field of the particles in a continuum body.  The physical significance of this type of models has been discussed in standard literatures, one can refer to~\cite{shamgv:smklim, Nelson,Papanicolaou}.

The study of the small mass  limit has been of interest since the
seminal work of Smoluchowski~\cite{Smoluchowski} and Kramers~\cite{Kramers},  now known as Smoluchowski--Kramers(S--K)  approximation. The mass limit of equations similar to (\ref{maineq}) are investigated by many authors, the case of a constant friction coefficient $\gamma$ is considered and  the limiting equation is formally obtained by taking $\epsilon = 0$ in (\ref{maineq})~\cite{Freidlin, Spiliopoulos, W.Wang}.
Furthermore, a substantial body of literature addresses S--K approximation in scenarios involving variable friction~\cite{MW,Hanggi, Herzog, shamgv:smklim, Pardoux, Sancho, W.Wang1}. However, the case of a state-independent friction creates an additional drift in the limiting equations which is relevant to the non-constant friction and the noises. Notably, Hottovy  et al.~\cite{shamgv:smklim}  provided a rigorous mathematical explanation, employing the theory of the convergence of stochastic
integrals with respect to semimartingales, to account for such experimental observations.

More recently, there are some work on  S--K approximation in mean filed limit for stochastic $N$-interacting
particles system~\cite[e.g.]{Carrillo, W.Wang, Xr.L}. The  motivation for our  equation~(\ref{maineq})  comes from interacting particles systems~\cite{Carrillo}, which derived the combined mean field  and small mass limit  with $\bm{\sigma}= 0$ at the hydrodynamic level by a discrete version of the modulated kinetic energy method.
More precisely, at the formal level, the $N$-point particle system of~\cite{Carrillo}  can write out the corresponding McKean--Vlasov SDE,
\begin{eqnarray}\label{intro-4}
\dot{\bm{x}_i }  &=& {\bm{v}_i },\\
  \epsilon \dot{\bm{x}_i}  &=&  - \gamma {\bm{v}_i } - \nabla_x V(\bm{x}_i)- \nabla_x W(\bm{x}_i)\star \rho_t\nonumber\\
 &&{}  +  \int_{\mathbb{R}^{d}\times \mathbb{R}^{d}} \psi( \bm{x}_i-y)w f(dy dw) -  {\bm{v}_i } \int_{\mathbb{R}^{d}\times \mathbb{R}^{d}} \psi( \bm{x}_i-y)f(dy dw)   ,\label{intro-5}
\end{eqnarray}
where $f_t(x, v)$ is the distribution of $({\bm{x}_i } ,{\bm{v}_i } )$.  For more detail one can refer to~\cite{Carrillo}.  In our case, we reset  the damping force ${\bm{v}_i } \int_{\mathbb{R}^{d}\times \mathbb{R}^{d}} \psi( \bm{x}_i-y)f(dy dw)$ in (\ref{intro-5}) to $\bm{\gamma}( \bm{x}_t^{\epsilon},\mathscr{L}_{\bm{x}_t^{\epsilon}}) \bm{v}_{t}^{\epsilon} $ and add  a random external force field. However,
such a method~\cite{Carrillo} fails due to the existence of noise.
Since the coefficients in the system \eqref{maineq} depend not only on the state $x_t^{\epsilon}$ but also on its distribution, the proof relies on a theory of convergence of stochastic integrals developed by Kurtz and Protter~\cite{Kurtz}, while also heavily hinging upon a preliminary study of the derivatives of the solution to the mean-field stochastic differential equation (also called as the McKean-Vlasov equation) with respect to the probability law and a corresponding $It\hat{o}$ formula.  We show that, under the assumptions in Section 2, the $\bm{x}_t^{\epsilon}$-component of the solution of equation~(\ref{maineq})  converges in $L^2$, with respect to the uniform norm in $C([0,T], \mathbb{R}^d)$,  to the solution $\bm{x}_{t}$  of the following limiting equation
\begin{eqnarray}\label{maineq-limit}
  {d}\bm{x}_{t} = [\bm{\gamma}^{-1}(\bm{x}_{t},\mathscr{L}_{\bm{x}_{t}})F(\bm{x}_{t})+S( \bm{x}_{t},\mathscr{L}_{\bm{x}_{t}}) +\widetilde{S}(\bm{x}_{t},\mathscr{L}_{\bm{x}_{t}})] {d}t+\bm{\gamma}^{-1}( \bm{x}_{t},\mathscr{L}_{\bm{x}_{t}}) \bm{\sigma}(\bm{x}_{t}) {d}\bm{W}_{t}
\end{eqnarray}
where $S_{i}(\bm{x},\mu) = \frac{\partial}{\partial x_{l}}[\bm{\gamma}^{-1}_{ij}(\bm{x},\mu)]J_{jl}(\bm{x},\mu)$ and $J$ solves the Lyapunov equation
\begin{eqnarray}
    J(\bm{x},\mu)\bm{\gamma}^{*}(\bm{x},\mu)+\bm{\gamma}(\bm{x},\mu)J(\bm{x},\mu) = \bm{\sigma}(\bm{x})\bm{\sigma}^{*}(\bm{x}),
\end{eqnarray}
and $\widetilde{S}_{i}(\bm{x},\mu) = \widetilde{E}[(\partial_{\mu}\bm{\gamma}^{-1}_{ij}(\bm{x},\mu)(\widetilde{\bm{x}}))_{l}\widetilde{J}_{jl}(\bm{x},\widetilde{\bm{x}},\mu)]$ and $\widetilde{J}$ solves the Sylevster equation
\begin{eqnarray}
    \widetilde{J}(\bm{x},\widetilde{\bm{x}},\mu)\bm{\gamma}^{*}(\widetilde{\bm{x}},\mu)+\bm{\gamma}(\bm{x},\mu)\widetilde{J}(\bm{x},\widetilde{\bm{x}},\mu) = \bm{\sigma}(\bm{x})\bm{\sigma}^{*}(\widetilde{\bm{x}}).
\end{eqnarray}
And the  rate of convergence is
    \begin{eqnarray}
        \mathbb{E}(\sup_{0 \leq t \leq T}\mid \bm{x}_t^{\epsilon}-\bm{x}_{t}\mid^2)\leq C\sqrt{\epsilon}.
    \end{eqnarray}

The rest of this paper is organized as follows. Some notations, assumptions and main results are introduced in Section 2. In Section 3, we study the well-posedness of equation (\ref{maineq}) and establish  some uniform bounds  for the solutions $(\bm{x}_{t}^\epsilon, \bm{v}_{t}^\epsilon)$ with respect to $\epsilon$. These
bounds enables us to demonstrate  the tightness of the distribution of $\bm{x}_{t}^\epsilon$, for sufficiently small $\epsilon$.
The limiting equation  and the rate of convergence  are derived in Section 4. Finally, we extend the system for more general interactions ${\bm F}$ and noise ${\bm\sigma}$ in Section 5.

\section{Preliminary}\label{main}
  \setcounter{equation}{0}
  \renewcommand{\theequation}
{2.\arabic{equation}}
~~~Suppose that $\mathscr{P}$ is  the set of probability measure on $(\mathbb{R}^{d},\mathscr{B}(\mathbb{R}^{d}))$. Let
\begin{eqnarray}
 \mathscr{P}_{2}:=\{\mu\in \mathscr{P}:\mu(\mid \cdot\mid^{2}):=\int_{\mathbb{R}^{d}}|x|^{2}\mu(\mathrm{d}x)<\infty\},
\end{eqnarray}
then $ \mathscr{P}_{2}$ is a polish space under the $L^{2}$-wasserstein distance, i.e.,
\begin{eqnarray}
    W(\mu,\nu):= \inf_{\rho\in\mathscr{C}_{\mu,\nu}}\Big[\Big(\int_{\mathbb{R}^{d}\times\mathbb{R}^{d}}|x-y|^{2}\rho(\mathrm{d}x,\mathrm{d}y)\Big)^{1/2}\Big], ~~\mu,\nu\in \mathscr{P}_{2},
\end{eqnarray}
where $\mathscr{C}_{\mu,\nu}$ is the set of all couplings for $\mu$ and $\nu$.
In the following parts, the notation $C_{T}$ denotes a positive constant depending on $T$ which maybe change from line to line.\\
Now, we first remind the reader of the definition of derivative on the Wasserstein space.  For the measure derivatives of introduced by Lions, we follow the approach in \cite{carp:nomefiga}. Consider $u:\mathscr{P}_{2}\rightarrow \mathbb{R}$.

\begin{definition}{\bf ($L$-differentiability at $\mu\in \mathscr{P}_{2}$)}
We say that $u$ is $L$-differentiable at $\mu\in \mathscr{P}_{2}$ if there is  an $X\in L^{2}(\Omega)$ such that $u = \mathscr{L}_{X}$ and the function $U: L^{2}(\Omega)\rightarrow \mathbb{R}$ given by $U(Y)\triangleq u(\mathscr{L}_{Y})$ is Fr$\acute{e}$chet differentiable at $X$. We will call U the lift of $u$.
\end{definition}

\begin{definition}{\bf (Lions derivative)}
If $u$ is $L$-differentiable at $\mu$ then we write $\partial_{\mu}u(\mathscr{L}_{X})(X):=DU(X)$, where $\partial_{\mu}u(\mathscr{L}_{X}):\mathbb{R}^{d}\rightarrow \mathbb{R}^{d}$, which is called Lions derivative of $u$ at $\mu=\mathscr{L}_{X}$. Moreover, we have $\partial_{\mu}u: \mathscr{P}_{2}\times \mathbb{R}^{d}\rightarrow  \mathbb{R}^{d}$ given by
$$\partial_{\mu}u(\mu,y):=[\partial_{\mu}u(\mu)](y).$$
\end{definition}
Moreover, $\partial_{\mu}u(\mu)\in L^{2}(\mu, \mathbb{R}^{d})$, for $\mu\in\mathscr{P}_{2}$. We observe that if $\mu$ is fixed then $\partial_{\mu}u(\mu)$ is a function from $\mathbb{R}^{d}\rightarrow
 \mathbb{R}^{d}$. Furthermore, if $\partial_{\mu}u(\mu):\mathbb{R}^{d}\rightarrow \mathbb{R}^{d}$ is differentiable at $y\in \mathbb{R}^{d}$, we denote its derivative by $\partial_{y}\partial_{\mu}u(\mu):\mathbb{R}^{d}\rightarrow \mathbb{R}^{d}\times \mathbb{R}^{d}$.

Let $\mid \cdot \mid$ be the Euclidean vector norm, $\langle\cdot,\cdot\rangle$ be the Euclidean inner product, $\parallel \cdot \parallel$ be the matrix norm or the operator norm if there is no confusion possible and $\parallel \cdot \parallel_{2}$ be the spectral norm.  We call a vector-valued, or matrix-valued function $u(\mu)~=~(u_{ij}(\mu))$ differentiable at $\mu\in\mathscr{P}_{2}$, if its all components are differentiable at $\mu$, and set $\partial
_{\mu}u(\mu):=(\partial_{\mu}u_{ij}(\mu))$. Furthermore, we call $\partial_{\mu}u(\mu)(y)$ differentiable at $y\in \mathbb{R}^{d}$, if all its
components are differentiable at $y$, and set $\partial_{y}\partial_{\mu}u(\mu)(y):=(\partial_{y}\partial_{\mu}u_{ij}(\mu)(y))$.

\begin{definition}
    We write $u\in C^{1,1}(\mathscr{P}_{2})$, if the map $u: \mathscr{P}_{2}\rightarrow \mathbb{R}$ is $L$-differentiable at any point $\mu\in\mathscr{P}_{2}$, and $\partial_{\mu}u: \mathscr{P}_{2}\times \mathbb{R}^{d}\rightarrow \mathbb{R}^{d}$ is Lipschitz continuous, that is, there is some real constant $C$ such that:
     \begin{eqnarray}
         |\partial_{\mu}u(\mu_{1},x_{1})-\partial_{\mu}u(\mu_{2},x_{2})|\leq C(\mathbb{W}(\mu_{1},\mu_{2})+|x_{1}-x_{2}|), ~\mu_{1},\mu_{2}\in\mathscr{P}_{2}, x_{1}, x_{2}\in\mathbb{R}^{d}.
    \end{eqnarray}
If moreover $\partial_{\mu}u(\mu,x)$ is bounded, we donote $u\in C_{b}^{1,1}(\mathscr{P}_{2})$, i.e.,
    \begin{eqnarray}
         |\partial_{\mu}u(\mu,x)|\leq C,\mu\in\mathscr{P}_{2}, x\in\mathbb{R}^{d}.
    \end{eqnarray}
\end{definition}

\begin{definition}
    We say that $\mu\in C^{1}(\mathbb{R}^{d})$ if the map $u: \mathbb{R}^{d}\rightarrow \mathbb{R}$ is differentiable at any point $x\in\mathbb{R}^{d}$ and there is some real constant $C$ such that
    \begin{eqnarray}
         |u(x_{1})-u(x_{2})|\leq C|x_{1}-x_{2}|, \quad x_{1}, x_{2}\in\mathbb{R}^{d}.
    \end{eqnarray}
    If moreover $\partial_{x}u(x)$ is bounded, we denote $u\in C_{b}^{1}(\mathbb{R}^{d})$, i.e.,
    \begin{eqnarray}
         |\partial_{x}u(x)|\leq C, \quad x\in\mathbb{R}^{d}.
    \end{eqnarray}
\end{definition}

\begin{definition}
    By $C_{b}^{1,(1,1)} (\mathbb{R}^{d}\times \mathscr{P}_{2})$ we denote the functions $u(x,\mu)$ such that $u(\cdot,\mu)\in C_{b}^{1} (\mathbb{R}^{d})$ for each $\mu$, and such that $u(x,\cdot)\in C_{b}^{(1,1)} (\mathscr{P}_{2})$ for each $x$.
\end{definition}

To conveniently express integrals with respect to the laws of process taken only over the "new" variable arising in the measure derivative we introduce another probability space $(\widetilde{\Omega},\widetilde{\mathcal{F}},\widetilde{\mathbb{P}})$ with a filtration $(\mathcal{F}_{t})_{t\geq 0}$ and processes $\widetilde{x}$ on this probability space such that they have the same laws as $x$. The expectation $\widetilde{\mathbb{E}}[\cdot] = \int_{\widetilde{\Omega}}(\cdot) {d}\widetilde{\mathbb{P}}$ acts only over the variables endowed with a tilde. Moreover, if we now consider the probability space $(\Omega\times\widetilde{\Omega},\mathcal{F}\times\widetilde{\mathcal{F}},\mathbb{P}\times\widetilde{\mathbb{P}})$$(=(\Omega\times\Omega,\mathcal{F}\times\mathcal{F},\mathbb{P}\times \mathbb{P}))$,  then we see that the processes with and without tilde are independent on this new space.

Next, we give out some assumptions throughout this work. We suppose that for any $T>0$, there exist constants $C_{T}$ such that the following conditions hold for all $t\in [0, T]$, $x, x_{1}, x_{2} \in \mathbb{R}^{d}$, $v, v_{1}, v_{2} \in \mathbb{R}^{d}$, $\mu,\mu_{1},\mu_{2}\in \mathscr{P}_{2}$.

\begin{assumption}\label{a1}
\begin{eqnarray}\label{e1}
&&\mid \bm{F}(\bm{x}_{1})-\bm{F}(\bm{x}_{2})\mid+\parallel \bm{\sigma}(\bm{x}_{1})-\bm{\sigma}(\bm{x}_{2})\parallel
\leq C_{T} \mid \bm{x}_{1}-\bm{x}_{2}\mid ;
\end{eqnarray}
\end{assumption}

\begin{assumption}\label{a2}
\begin{eqnarray}\label{e2}
&&\mid \bm{F}(\bm{x})\mid^{2}+\parallel \bm{\sigma}(\bm{x})\parallel^{2}
\leq C_{T}(1+\mid \bm{x}\mid^{2})  ;
\end{eqnarray}
\end{assumption}

\begin{assumption}\label{a3}
The function $\bm{\gamma}(\bm{x},\mu)\in C_{b}^{1,(1,1)}(\mathbb{R}^{d},\mathscr{P}_{2})$ and
\begin{eqnarray}
\parallel \bm{\gamma}(\bm{x}_{1},\mu_{1})-\bm{\gamma}(\bm{x}_{2},\mu_{2})\parallel\leq C_{T}\Big[\mid \bm{x}_{1}-\bm{x}_{2}\mid+\mathbb{W}(\mu_{1},\mu_{2})\Big],
\end{eqnarray}
\begin{eqnarray}\label{e3-1}
|\partial_{x}\bm{\gamma}_{ij}(\bm{x}_{1},\mu_{1})-\partial_{x}\bm{\gamma}_{ij}(\bm{x}_{2},\mu_{2})|\leq C_{T}\Big[\mid \bm{x}_{1}-\bm{x}_{2}\mid+\mathbb{W}(\mu_{1},\mu_{2})\Big].
\end{eqnarray}
\end{assumption}

\begin{assumption}\label{a4}
The smallest eigenvalue $\lambda_{1}(\bm{x},\mu)$ of the symmetric part $\frac{1}{2}(\bm{\gamma}+\bm{\gamma}^{*})$ of the matrix $\bm{\gamma}$ satisfies
\begin{eqnarray}\label{e4}
    \lambda_{1}(\bm{x},\mu)\geq C_{\lambda_{\bm{\gamma}}}>0.
\end{eqnarray}
\end{assumption}

\begin{remark}

Conditions (\ref{e1})-(\ref{e3-1}) are used to guarantee the existence and uniqueness
 of strong solutions to system  (\ref{maineq}). To prove tightness, we need the assumption \ref{a4} additionally.

\end{remark}

\section{Tightness and Well-posedness }
  \setcounter{equation}{0}
  \renewcommand{\theequation}
{3.\arabic{equation}}

\subsection{Tightness}

Firstly, we prove some uniform bounds $w.r.t.$ $\epsilon\in(0,1)$ for the 2nd moment of the solution $(\bm{x}_t^{\epsilon}, \sqrt{\epsilon}\bm{v}_{t}^{\epsilon})$ to system (\ref{maineq}).
\begin{lemma}\label{lem1}
    Assume that {assumptions \ref{a1}-\ref{a4}} hold. For every $T>0$, there exists a constant $C_{T}>0$ such that
    \begin{eqnarray}
        \mathbb {E}[\sup_{0\leq t\leq T}\mid \bm{x}_t^{\epsilon} \mid^{2}]\leq C_{T},
    \end{eqnarray}
    and for every $0\leq s\leq t\leq T$ and $\epsilon>0$, there exist a constant $C_{T}>0$ such that
    \begin{eqnarray}
        \mathbb {E}\mid \bm{x}_t^{\epsilon}-\bm{x}_{s}^{\epsilon}\mid^{2}\leq C_{T}|t-s|.
    \end{eqnarray}
\end{lemma}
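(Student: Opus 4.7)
The main difficulty is that the singular term $\tfrac{1}{\epsilon}\mathrm{tr}(\bm{\sigma}\bm{\sigma}^{*})$ appearing in $d(\epsilon|\bm{v}_t^\epsilon|^2)$ prevents the Gronwall loop from closing if one works directly with the naive Lyapunov function $|\bm{x}|^2 + \epsilon|\bm{v}|^2$. I would therefore follow a Hottovy-type device and introduce the auxiliary process
\[
\bm{Y}_t^\epsilon := \bm{x}_t^\epsilon + \epsilon\,\bm{\gamma}^{-1}(\bm{x}_t^\epsilon,\mathscr{L}_{\bm{x}_t^\epsilon})\bm{v}_t^\epsilon,
\]
whose evolution absorbs the singular scaling. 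Since $\bm{x}_t^\epsilon$ is of bounded variation, the McKean--Vlasov It\^o formula for $\bm{\gamma}^{-1}(\bm{x}_t^\epsilon,\mathscr{L}_{\bm{x}_t^\epsilon})$ reduces to a pure drift
\[
d\bm{\gamma}^{-1}(\bm{x}_t^\epsilon,\mathscr{L}_{\bm{x}_t^\epsilon}) = \partial_x\bm{\gamma}^{-1}\bm{v}_t^\epsilon\,dt + \widetilde{\mathbb{E}}\big[\partial_\mu\bm{\gamma}^{-1}(\tilde{\bm{x}}_t^\epsilon)\tilde{\bm{v}}_t^\epsilon\big]dt,
\]
and combining this with the identity $\bm{v}\,dt = \bm{\gamma}^{-1}\bm{F}\,dt - \epsilon\bm{\gamma}^{-1}d\bm{v} + \bm{\gamma}^{-1}\bm{\sigma}\,d\bm{W}$ extracted from the $\bm{v}$-equation yields
\[
d\bm{Y}_t^\epsilon = \bm{\gamma}^{-1}\bm{F}\,dt + \bm{\gamma}^{-1}\bm{\sigma}\,d\bm{W}_t + \epsilon\bm{v}_t^\epsilon\,d[\bm{\gamma}^{-1}(\bm{x}_t^\epsilon,\mathscr{L}_{\bm{x}_t^\epsilon})],
\]
which contains no factor of $1/\epsilon$.

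A preliminary It\^o estimate on $\epsilon|\bm{v}_t^\epsilon|^2$, using the coercivity $\bm{v}^{*}\bm{\gamma}\bm{v} \geq C_{\lambda_{\bm{\gamma}}}|\bm{v}|^2$ from Assumption \ref{a4}, Young's inequality on the cross term $2\bm{v}^{*}\bm{F}$, and the growth bounds from Assumption \ref{a2}, yields
\[
\epsilon\mathbb{E}|\bm{v}_t^\epsilon|^2 + C_{\lambda_{\bm{\gamma}}}\mathbb{E}\!\int_0^t |\bm{v}_s^\epsilon|^2\,ds \leq C_T\Big(1+\sup_{r\leq t}\mathbb{E}|\bm{x}_r^\epsilon|^2\Big),
\]
together with an analogous fourth-moment bound $\epsilon^2\mathbb{E}|\bm{v}_t^\epsilon|^4 \leq C_T(1+\sup_{r\leq t}\mathbb{E}|\bm{x}_r^\epsilon|^4)$ by applying It\^o to $\epsilon^2|\bm{v}|^4$. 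I would then take $L^2$-norms of the $\bm{Y}$-equation, use Burkholder--Davis--Gundy on the stochastic integral, and exploit the boundedness of $\bm{\gamma}^{-1}$, $\partial_x\bm{\gamma}^{-1}$, $\partial_\mu\bm{\gamma}^{-1}$ (implicit in $\bm{\gamma}\in C_b^{1,(1,1)}$) together with the growth of $\bm{F},\bm{\sigma}$ to obtain a closed Gronwall inequality for $\mathbb{E}\sup_{s\leq t}|\bm{Y}_s^\epsilon|^2$, after rewriting $|\bm{x}|^2 \leq 2|\bm{Y}|^2 + C\epsilon^2|\bm{v}|^2$ and absorbing the $O(\epsilon)$ contribution into the left-hand side. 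This yields $\mathbb{E}\sup_{s\leq T}|\bm{x}_s^\epsilon|^2 \leq C_T$.

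The increment estimate follows from the same representation on $[s,t]$:
\[
\bm{x}_t^\epsilon - \bm{x}_s^\epsilon = \int_s^t \bm{\gamma}^{-1}\bm{F}\,dr + \int_s^t \bm{\gamma}^{-1}\bm{\sigma}\,d\bm{W}_r + \epsilon\!\int_s^t \bm{v}_r^\epsilon\,d[\bm{\gamma}^{-1}] - \epsilon[\bm{\gamma}^{-1}\bm{v}]_s^t.
\]
Cauchy--Schwarz handles the first drift integral as $O((t-s)^2)$, It\^o's isometry gives $C_T(t-s)$ for the stochastic integral, and the endpoint term $\epsilon[\bm{\gamma}^{-1}\bm{v}]_s^t$ is $O(\epsilon)$ in $L^2$ thanks to $\epsilon\mathbb{E}|\bm{v}|^2 \leq C_T$. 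A case split completes the argument: when $t-s \geq \epsilon$ the $O(\epsilon)$-terms are dominated by $C_T(t-s)$, and when $t-s < \epsilon$ one uses the direct bound $|\bm{x}_t^\epsilon-\bm{x}_s^\epsilon|^2 \leq (t-s)\int_s^t|\bm{v}_r^\epsilon|^2 dr$ combined with $\mathbb{E}\int_s^t|\bm{v}_r^\epsilon|^2 dr \leq C_T(t-s)/\epsilon$, giving $C_T(t-s)^2/\epsilon \leq C_T(t-s)$.

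The main obstacle is the careful execution of the McKean--Vlasov It\^o formula for $\bm{\gamma}^{-1}(\bm{x}_t^\epsilon,\mathscr{L}_{\bm{x}_t^\epsilon})$ and the handling of the measure-derivative drift $\widetilde{\mathbb{E}}[\partial_\mu\bm{\gamma}^{-1}(\tilde{\bm{x}}_t^\epsilon)\tilde{\bm{v}}_t^\epsilon]$, which couples the state to an independent copy through its law and demands the fourth-moment bound on $\bm{v}$ to close the Gronwall loop.
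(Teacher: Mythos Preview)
Your approach via the auxiliary process $\bm{Y}_t^\epsilon = \bm{x}_t^\epsilon + \epsilon\bm{\gamma}^{-1}\bm{v}_t^\epsilon$ is genuinely different from the paper's, but as written it contains a gap that you yourself flag in your opening sentence and then overlook. The displayed ``preliminary'' estimate
\[
\epsilon\,\mathbb{E}|\bm{v}_t^\epsilon|^2 + C_{\lambda_{\bm{\gamma}}}\,\mathbb{E}\!\int_0^t |\bm{v}_s^\epsilon|^2\,ds \;\leq\; C_T\bigl(1+\sup_{r\leq t}\mathbb{E}|\bm{x}_r^\epsilon|^2\bigr)
\]
is false for the \emph{integrated} term: applying It\^o to $\epsilon|\bm{v}|^2$ produces exactly the singular source $\tfrac{1}{\epsilon}\mathrm{tr}(\bm{\sigma}\bm{\sigma}^*)$ you identified, so $\int_0^t\mathbb{E}|\bm{v}_s^\epsilon|^2\,ds$ carries an unavoidable factor $1/\epsilon$. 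What \emph{is} true is the pointwise bound $\epsilon\,\mathbb{E}|\bm{v}_t^\epsilon|^2 \leq C_T(1+\sup_{r\leq t}\mathbb{E}|\bm{x}_r^\epsilon|^2)$, obtained by treating the differential inequality for $\epsilon\mathbb{E}|\bm{v}_t|^2$ as an ODE with decay rate $C_{\lambda_{\bm{\gamma}}}/\epsilon$ and source $O(1/\epsilon)$; the two $1/\epsilon$'s cancel in the Duhamel integral. But this pointwise bound is not enough to close your Gronwall loop on $\mathbb{E}|\bm{Y}_t|^2$: the drift term $\epsilon\,\bm{v}_t^\epsilon\,\partial_x\bm{\gamma}^{-1}\bm{v}_t^\epsilon$ forces you, after Young's inequality, to control $\int_0^t\epsilon^2\mathbb{E}|\bm{v}_s^\epsilon|^4\,ds$, and your (correct) pointwise fourth-moment bound only delivers this in terms of $\sup_{r\leq t}\mathbb{E}|\bm{x}_r^\epsilon|^4$. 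So the second-moment loop does not close on its own; you would need a separate fourth-moment Gronwall argument first, which you do not supply and which has the same structural obstacle one level up.

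The paper avoids this entirely by working with the mild formulation. It introduces the fundamental solution $\phi(t)$ of $\dot\phi = -\tfrac{1}{\epsilon}\bm{\gamma}(\bm{x}_t^\epsilon,\mathscr{L}_{\bm{x}_t^\epsilon})\phi$, writes $\bm{v}_t^\epsilon$ by variation of constants, and then integrates once more to express $\bm{x}_h^\epsilon$ as a sum of double integrals of $\phi(t)\phi^{-1}(s)$ against $F$, $\bm{\sigma}\,d\bm{W}$, and the initial velocity. The key estimate $\|\phi(t)\phi^{-1}(s)\| \leq e^{-C_{\lambda_{\bm{\gamma}}}(t-s)/\epsilon}$ (Hartman, Lemma~4.4.2) makes the $1/\epsilon$ prefactor disappear upon the inner $t$-integration, and one obtains directly a closed Gronwall inequality for $\mathbb{E}\sup_{h\leq T}|\bm{x}_h^\epsilon|^2$ without any prior moment bound on $\bm{v}^\epsilon$. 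The increment estimate is handled by the same representation, splitting the double integrals over $[0,h]\times[h,h+k]$ and $[h,h+k]\times[s,h+k]$ and using the exponential decay together with the mean value theorem, again with no case distinction on $t-s$ versus $\epsilon$ and no appeal to velocity moments.
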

\begin{proof}
First, we rewrite (\ref{maineq}) as
\begin{eqnarray}
      {d}\bm{v}_{t}^{\epsilon}+\frac{1}{\epsilon}\bm{\gamma}( \bm{x}_t^{\epsilon},\mathscr{L}_{\bm{x}_t^{\epsilon}}) \bm{v}_{t}^{\epsilon} {d}t = \frac{1}{\epsilon}F(\bm{x}_t^{\epsilon}) {d}t+\frac{1}{\epsilon}\bm{\sigma}(\bm{x}_t^{\epsilon}) {d}\bm{W}_{t}.
\end{eqnarray}
Let $\phi(t)$ satisfies
\begin{eqnarray}
    \frac{ {d}\phi(t)}{ {d}t} = -\frac{1}{\epsilon}\bm{\gamma}( \bm{x}_t^{\epsilon},\mathscr{L}_{\bm{x}_t^{\epsilon}})\phi(t), ~\phi(0) = I,
\end{eqnarray}
thus,
\begin{eqnarray}
     {d}(\phi^{-1}(t)\bm{v}_{t}^{\epsilon}) = \phi^{-1}(t) \frac{1}{\epsilon}F(\bm{x}_t^{\epsilon}) {d}t+\phi^{-1}(t) \frac{1}{\epsilon}\bm{\sigma}(\bm{x}_t^{\epsilon}) {d}\bm{W}_{t},
\end{eqnarray}
and hence
\begin{eqnarray}
   \bm{v}_{t}^{\epsilon} = \frac{1}{\epsilon}\int_{0}^{t}\phi(t)\phi^{-1}(s) F(\bm{x}_{s}^{\epsilon}) {d}s+\frac{1}{\epsilon}\int_{0}^{t}\phi(t)\phi^{-1}(s) \bm{\sigma}(\bm{x}_{s}^{\epsilon}) {d}\bm{W}_{s}+\phi(t)\bm{v}_{0}.
\end{eqnarray}
Therefore,
\begin{eqnarray}
    &&\bm{x}_{h}^{\epsilon}= \int_{0}^{h}\bm{v}_{t}^{\epsilon} {d}t + \bm{x}_{0}
    \nonumber\\
    &=& \frac{1}{\epsilon}\int_{0}^{h} \int_{0}^{t}\phi(t)\phi^{-1}(s) F(\bm{x}_{s}^{\epsilon}) {d}s  {d}t
    + \frac{1}{\epsilon}\int_{0}^{h}\int_{0}^{t}\phi(t)\phi^{-1}(s) \bm{\sigma}(\bm{x}_{s}^{\epsilon}) {d}\bm{W}_{s} {d}t
    +\int_{0}^{h}\phi(t)\bm{v}_{0} {d}t+ \bm{x}_{0}
    \nonumber\\
    &=&\sum_{i=1}^{3}I_{i}(h)+ \bm{x}_{0}.
\end{eqnarray}
Let $y(t)=\phi(t)\phi^{-1}(s) F(\bm{x}_{s}^{\epsilon})$, then $y(s)=F(\bm{x}_{s}^{\epsilon})$ and
\begin{eqnarray}
    \frac{ {d}y(t)}{ {d}t} &=& \frac{ {d}\phi(t)}{ {d}t}\phi^{-1}(s)F(\bm{x}_{s}^{\epsilon})
    \nonumber\\
    &=& -\frac{1}{\epsilon}\bm{\gamma}( \bm{x}_t^{\epsilon},\mathscr{L}_{\bm{x}_t^{\epsilon}})\phi(t)\phi^{-1}(s)F(\bm{x}_{s}^{\epsilon})
    \nonumber\\
    &=& -\frac{1}{\epsilon}\bm{\gamma}( \bm{x}_t^{\epsilon},\mathscr{L}_{\bm{x}_t^{\epsilon}})y(t),
\end{eqnarray}
thus, by Lemma 4.4.2 in \cite{HP:ordieq}, we have
\begin{eqnarray}\label{ppinvf}
    |\phi(t)\phi^{-1}(s) F(\bm{x}_{s}^{\epsilon})|&\leq&|F(\bm{x}_{s}^{\epsilon})|e^{-\int_{s}^{t}\frac{\lambda_{1}(\bm{x}_{r}^{\epsilon},\mathscr{L}_{\bm{x}_{r}^{\epsilon}})}{\epsilon} {d}r}
    \nonumber\\
    &\leq&|F(\bm{x}_{s}^{\epsilon})|e^{-\frac{C_{\lambda_{\bm{\gamma}}}}{\epsilon}(t-s)}.
    \end{eqnarray}
Similarly, we have
\begin{eqnarray}\label{sigeat}
    |\phi(t)\phi^{-1}(s) \bm{\sigma}(\bm{x}_{s}^{\epsilon})|
    &\leq&|\bm{\sigma}(\bm{x}_{s}^{\epsilon})|e^{-\frac{C_{\lambda_{\bm{\gamma}}}}{\epsilon}(t-s)},
\end{eqnarray}
and
\begin{eqnarray}\label{conest}
    |\phi(t)\bm{v}_{0}|&\leq&|\bm{v}_{0}|e^{-\frac{C_{\lambda_{\bm{\gamma}}}}{\epsilon}t}.
\end{eqnarray}
By Fubini Theorem, we have
\begin{eqnarray}\label{I1pre}
    |I_{1}(h)|&\leq&\frac{1}{\epsilon}\int_{0}^{h}\int_{s}^{h}|F(\bm{x}_{s}^{\epsilon})|e^{-\frac{C_{\lambda_{\bm{\gamma}}}}{\epsilon}(t-s)} {d}t {d}s
    \nonumber\\
    &=&\frac{1}{C_{\lambda_{\bm{\gamma}}}}\int_{0}^{h}|F(\bm{x}_{s}^{\epsilon})| {d}s,
\end{eqnarray}
and hence by \eqref{I1pre}, H\"{o}lder inequality and assumption \ref{a2}, we have
\begin{eqnarray}\label{est1}
     \mathbb {E}\sup_{0\leq h\leq T}|I_{1}(h)|^{2}
    &\leq&\frac{T^{2}}{C^{2}_{\lambda_{\bm{\gamma}}}}\mathbb {E}\int_{0}^{T}|F(\bm{x}_{s}^{\epsilon})|^{2} {d}s
    \nonumber\\
    &\leq&\frac{C_{T}}{C^{2}_{\lambda_{\bm{\gamma}}}}\int_{0}^{T}\mathbb {E}[1+|\bm{x}_{s}^{\epsilon}|^{2}] {d}s
    \nonumber\\
    &\leq&\frac{C_{T}}{C^{2}_{\lambda_{\bm{\gamma}}}} +\frac{C_{T}}{C^{2}_{\lambda_{\bm{\gamma}}}}\int_{0}^{T}\mathbb {E}\sup_{0\leq r\leq s}|\bm{x}_{r}^{\epsilon}|^{2} {d}s.
\end{eqnarray}
By Stochastics Fubini Theorem, we have
\begin{eqnarray}
    I_{2}(h) &=& \frac{1}{\epsilon}\int_{0}^{h}\int_{0}^{t}\phi(t)\phi^{-1}(s) \bm{\sigma}(\bm{x}_{s}^{\epsilon}) {d}\bm{W}_{s} {d}t
    \nonumber\\
    &=& \frac{1}{\epsilon}\int_{0}^{h}\int_{s}^{h}\phi(t)\phi^{-1}(s) \bm{\sigma}(\bm{x}_{s}^{\epsilon}) {d}t {d}\bm{W}_{s},
\end{eqnarray}
and let
\begin{eqnarray}
    \widehat{I_{2}} = \frac{1}{\epsilon}\int_{s}^{h}\phi(t)\phi^{-1}(s) \bm{\sigma}(\bm{x}_{s}^{\epsilon}) {d}t,
\end{eqnarray}
thus, from (\ref{sigeat}) we have
\begin{eqnarray}
    |\widehat{I_{2}}| &\leq& \frac{1}{\epsilon}\int_{s}^{h}|\phi(t)\phi^{-1}(s) \bm{\sigma}(\bm{x}_{s}^{\epsilon})| {d}t
    \nonumber\\
    &\leq& \frac{1}{\epsilon}\int_{s}^{h}|\bm{\sigma}(\bm{x}_{s}^{\epsilon})|e^{-\frac{C_{\lambda_{\bm{\gamma}}}}{\epsilon}(t-s)} {d}t
    \nonumber\\
    &\leq&|\bm{\sigma}(\bm{x}_{s}^{\epsilon})|\frac{1}{C_{\lambda_{\bm{\gamma}}}},
 \end{eqnarray}
    and hence
    \begin{eqnarray}\label{sigest2}
      |\widehat{I_{2}}|^{2} =  \Big|\frac{1}{\epsilon}\int_{s}^{h}\phi(t)\phi^{-1}(s) \bm{\sigma}(\bm{x}_{s}^{\epsilon}) {d}t\Big|^{2}\leq|\bm{\sigma}(\bm{x}_{s}^{\epsilon})|^{2}\frac{1}{C^{2}_{\lambda_{\bm{\gamma}}}}.
    \end{eqnarray}
By Doob's maximal inequality, (\ref{sigest2}), Burkholder inequality and assumption \ref{a2}, we have
\begin{eqnarray}\label{est2}
    \mathbb {E}\sup_{0\leq h\leq T}|I_{2}(h)|^{2}  &=&E\sup_{0\leq h\leq T}\Big|\int_{0}^{h}\Big(\frac{1}{\epsilon}\int_{s}^{h}\phi(t)\phi^{-1}(s) \bm{\sigma}(\bm{x}_{s}^{\epsilon}) {d}t\Big) {d}\bm{W}_{s}\Big|^{2}
     \nonumber\\
     &\leq&4 \mathbb {E}\Big|\int_{0}^{T}\Big(\frac{1}{\epsilon}\int_{s}^{h}\phi(t)\phi^{-1}(s) \bm{\sigma}(\bm{x}_{s}^{\epsilon}) {d}t\Big) {d}\bm{W}_{s}\Big|^{2}
     \nonumber\\
    &\leq&4\mathbb {E}\int_{0}^{T}|\bm{\sigma}(\bm{x}_{s}^{\epsilon})|^{2}\frac{1}{C^{2}_{\lambda_{\bm{\gamma}}}} {d}s
      \nonumber\\
     &\leq&\frac{C_{T}}{C^{2}_{\lambda_{\bm{\gamma}}}}\int_{0}^{T}\mathbb {E}[1 +|\bm{x}_{s}^{\epsilon}|^{2}] {d}s
    \nonumber\\
    &\leq& \frac{C_{T}}{C^{2}_{\lambda_{\bm{\gamma}}}} + \frac{C_{T}}{C^{2}_{\lambda_{\bm{\gamma}}}}\int_{0}^{T}\mathbb {E}\sup_{0\leq r\leq s}|\bm{x}_{r}^{\epsilon}|^{2} {d}s.
\end{eqnarray}
For $I_{3}(h)$, by Cauchy-Schwarz's inequality, H\"{o}lder inequality and (\ref{conest}), we have
\begin{eqnarray}\label{est3}
E\sup_{0\leq h\leq T}|I_{3}(h)|^{2}  &=&  \mathbb {E}\sup_{0\leq h\leq T}\Big|\int_{0}^{h}\phi(t)\bm{v}_{0} {d}t+ \bm{x}_{0}\Big|^{2}
\nonumber\\
&\leq&C_T(\mathbb {E}\sup_{0\leq h\leq T}\int_{0}^{h}\Big|\phi(t)\bm{v}_{0}\Big|^{2} {d}t+ \Big|\bm{x}_{0}\Big|^{2})
\nonumber\\
&\leq&C_T(\mathbb {E}\int_{0}^{T}\Big|\phi(t)\bm{v}_{0}\Big|^{2} {d}t+ \Big|\bm{x}_{0}\Big|^{2})
\nonumber\\
&\leq&C_T(\mathbb {E}\int_{0}^{T}e^{-\frac{2C_{\lambda_{\bm{\gamma}}}}{\epsilon}t}\Big|\bm{v}_{0}\Big|^{2} {d}t+ \Big|\bm{x}_{0}\Big|^{2})
\nonumber\\
&\leq&C_{T}(
\frac{\epsilon}{2C_{\lambda_{\bm{\gamma}}}}\Big|\bm{v}_{0}\Big|^{2}+\Big|\bm{x}_{0}\Big|^{2}).
\end{eqnarray}
Combining (\ref{est1}), (\ref{est2}) and (\ref{est3}), by Cauchy-Schwarz's inequality we have
\begin{eqnarray}
    \mathbb {E}\sup_{0\leq h\leq T}|\bm{x}_{h}^{\epsilon}|^{2} &\leq&C\left(\frac{C_{T}}{C^{2}_{\lambda_{\bm{\gamma}}}} +\frac{C_{T}}{C^{2}_{\lambda_{\bm{\gamma}}}}\int_{0}^{T}\mathbb {E}\sup_{0\leq r\leq s}|\bm{x}_{r}^{\epsilon}|^{2} {d}s+C_{T}(
\frac{\epsilon}{2C_{\lambda_{\bm{\gamma}}}}\Big|\bm{v}_{0}\Big|^{2}+\Big|\bm{x}_{0}\Big|^{2})\right)
    \nonumber\\
    &\leq&\frac{C_{T}}{C^{2}_{\lambda_{\bm{\gamma}}}}\int_{0}^{T}\mathbb {E}\sup_{0\leq r\leq s}|\bm{x}_{r}^{\epsilon}|^{2} {d}s+C_{T}(\frac{1}{C^{2}_{\lambda_{\bm{\gamma}}}} +  \frac{\epsilon}{2C_{\lambda_{\bm{\gamma}}}}\Big|\bm{v}_{0}\Big|^{2}+\Big|\bm{x}_{0}\Big|^{2}),
   \end{eqnarray}
thus, by Gronwall inequality we have
\begin{eqnarray}\label{xsqubound}
    \mathbb {E}\sup_{0\leq h\leq T}|\bm{x}_{h}^{\epsilon}|^{2}
    &\leq&C_{T}(\frac{1}{C^{2}_{\lambda_{\bm{\gamma}}}}+\frac{\epsilon}{2C_{\lambda_{\bm{\gamma}}}}\Big|\bm{v}_{0}\Big|^{2}+\Big|\bm{x}_{0}\Big|^{2})
    (1+\frac{C_{T}}{C^{2}_{\lambda_{\bm{\gamma}}}}e^{\frac{C_{T}}{C^{2}_{\lambda_{\bm{\gamma}}}}})\leq C_{T}.
\end{eqnarray}
For the proof of the second part, by Stochastic Fubini Theorem, we have
\begin{align}\label{xconpre}
\bm{x}_{h+k}-\bm{x}_{h}
     &=\frac{1}{\epsilon}\int_{0}^{h+k}\int_{s}^{h+k}\phi(t)\phi^{-1}(s) F(\bm{x}_{s}^{\epsilon}) {d}t  {d}s
   \nonumber\\
  &\ \ \ \ + \frac{1}{\epsilon}\int_{0}^{h+k}\int_{s}^{h+k} \phi(t)\phi^{-1}(s) \bm{\sigma}(\bm{x}_{s}^{\epsilon}) {d}t {d}\bm{W}_{s}
    +\int_{0}^{h+k}\phi(t)\bm{v}_{0} {d}t
    \nonumber\\
   &\ \ \ \ -\frac{1}{\epsilon}\int_{0}^{h}\int_{s}^{h}\phi(t)\phi^{-1}(s) F(\bm{x}_{s}^{\epsilon}) {d}t  {d}s
   \nonumber\\
    &\ \ \ \ - \frac{1}{\epsilon}\int_{0}^{h}\int_{s}^{h}\phi(t)\phi^{-1}(s) \bm{\sigma}(\bm{x}_{s}^{\epsilon}) {d}t {d}\bm{W}_{s}
    -\int_{0}^{h}\phi(t)\bm{v}_{0} {d}t
\nonumber\\
     &= \frac{1}{\epsilon}\Big(\int_{0}^{h}\int_{h}^{h+k}\phi(t)\phi^{-1}(s) F(\bm{x}_{s}^{\epsilon})  {d}t {d}s+ \int_{h}^{h+k}\int_{s}^{h+k}\phi(t)\phi^{-1}(s) F(\bm{x}_{s}^{\epsilon})  {d}t {d}s\Big)
    \nonumber\\
    &\ \ \ \ + \frac{1}{\epsilon}\Big(\int_{0}^{h}\int_{h}^{h+k} \phi(t)\phi^{-1}(s) \bm{\sigma}(\bm{x}_{s}^{\epsilon}) {d}t {d}\bm{W}_{s}
    + \int_{h}^{h+k}\int_{s}^{h+k} \phi(t)\phi^{-1}(s) \bm{\sigma}(\bm{x}_{s}^{\epsilon}) {d}t {d}\bm{W}_{s}\Big)
    \nonumber\\
    &\ \ \ \ +\int_{h}^{h+k}\phi(t)\bm{v}_{0} {d}t
    \nonumber\\
    &=\sum_{i=1}^{3}\widetilde{I}_{i}(h).
\end{align}
For $\widetilde{I}_{1}(h)$, by \eqref{ppinvf} and mean value Theorem we have
\begin{align}
    |\widetilde{I}_{1}(h)|&=\left|\frac{1}{\epsilon}\int_{0}^{h}\int_{h}^{h+k}\phi(t)\phi^{-1}(s) F(\bm{x}_{s}^{\epsilon})  {d}t {d}s+ \frac{1}{\epsilon}\int_{h}^{h+k}\int_{s}^{h+k}\phi(t)\phi^{-1}(s) F(\bm{x}_{s}^{\epsilon})  {d}t {d}s\right|
    \nonumber\\
    &\leq \left|\frac{1}{\epsilon}\int_{0}^{h}\int_{h}^{h+k}\phi(t)\phi^{-1}(s) F(\bm{x}_{s}^{\epsilon})  {d}t {d}s\right|+ \left|\frac{1}{\epsilon}\int_{h}^{h+k}\int_{s}^{h+k}\phi(t)\phi^{-1}(s) F(\bm{x}_{s}^{\epsilon})  {d}t {d}s\right|\nonumber\\
    &\leq\frac{1}{\epsilon}\int_{0}^{h}\int_{h}^{h+k}\left|\phi(t)\phi^{-1}(s) F(\bm{x}_{s}^{\epsilon})\right| {d}t {d}s+ \frac{1}{\epsilon}\int_{h}^{h+k}\int_{s}^{h+k}\left|\phi(t)\phi^{-1}(s) F(\bm{x}_{s}^{\epsilon})\right| {d}t {d}s\nonumber\\
    &\leq \frac{1}{\epsilon}\int_{0}^{h}\int_{h}^{h+k} |F(\bm{x}_{s}^{\epsilon})|e^{-\frac{C_{\lambda_{\bm{\gamma}}}}{\epsilon}(t-s)} {d}t {d}s+ \frac{1}{\epsilon}\int_{h}^{h+k}\int_{s}^{h+k}|F(\bm{x}_{s}^{\epsilon})|e^{-\frac{C_{\lambda_{\bm{\gamma}}}}{\epsilon}(t-s)} {d}t {d}s \nonumber\\
     &\leq \frac{1}{\epsilon}\int_{0}^{h} \frac{\epsilon}{C_{\lambda_{\bm{\gamma}}}}(e^{-\frac{C_{\lambda_{\bm{\gamma}}}}{\epsilon}(h-s)}-e^{-\frac{C_{\lambda_{\bm{\gamma}}}}{\epsilon}(h+k-s)})
     |F(\bm{x}_{s}^{\epsilon})|\mathrm{d}s\nonumber
     \end{align}
      \begin{align}\label{absI1con}
    &\ \ \ \ + \frac{1}{\epsilon}\int_{h}^{h+k}\frac{\epsilon}{C_{\lambda_{\bm{\gamma}}}}(1-e^{-\frac{C_{\lambda_{\bm{\gamma}}}}{\epsilon}(h+k-s)})|F(\bm{x}_{s}^{\epsilon})| {d}t {d}s\nonumber\\
    &\leq\frac{1}{C_{\lambda_{\bm{\gamma}}}}\int_{0}^{h} (\frac{C_{\lambda_{\bm{\gamma}}}}{\epsilon}e^{-\frac{C_{\lambda_{\bm{\gamma}}}}{\epsilon}\theta}k)|F(\bm{x}_{s}^{\epsilon})| {d}s + \frac{1}{C_{\lambda_{\bm{\gamma}}}}\int_{h}^{h+k}|F(\bm{x}_{s}^{\epsilon})| {d}t {d}s,
\end{align}
thus, by \eqref{absI1con},
H\"{o}lder inequality, assumption \ref{a2} and \eqref{xsqubound}, we get
\begin{align}\label{I1squcon}
    \mathbb {E}|\widetilde{I}_{1}(h)|^2&\leq \frac{2h}{C^2_{\lambda_{\bm{\gamma}}}}\mathbb {E}\int_{0}^{h} (\frac{C_{\lambda_{\bm{\gamma}}}}{\epsilon}e^{-\frac{C_{\lambda_{\bm{\gamma}}}}{\epsilon}\theta}k)^2|F(\bm{x}_{s}^{\epsilon})|^2 {d}s\nonumber\\
    &\ \ \ \ + \frac{2k}{C^2_{\lambda_{\bm{\gamma}}}}\mathbb {E}\int_{h}^{h+k}|F(\bm{x}_{s}^{\epsilon})|^2 {d}t {d}s\nonumber\\
    &\leq \frac{2hk^2}{C^2_{\lambda_{\bm{\gamma}}}}\int_{0}^{h} \mathbb {E}|\bm{x}_{s}^{\epsilon}|^2 {d}s + \frac{2k}{C^2_{\lambda_{\bm{\gamma}}}}\int_{h}^{h+k}\mathbb {E}|\bm{x}_{s}^{\epsilon}|^2 {d}t {d}s\nonumber\\
    &\leq \frac{2k^2}{C^2_{\lambda_{\bm{\gamma}}}}(h^2+1).
\end{align}
For $\widetilde{I}_{2}(h)$, let
\begin{align}
    \hat{I}_{21}(s) = \frac{1}{\epsilon}\int_{h}^{h+k} \phi(t)\phi^{-1}(s) \bm{\sigma}(\bm{x}_{s}^{\epsilon}) {d}t,
\end{align}
and
\begin{align}
    \hat{I}_{22}(s) = \frac{1}{\epsilon}\int_{s}^{h+k} \phi(t)\phi^{-1}(s) \bm{\sigma}(\bm{x}_{s}^{\epsilon}) {d}t,
\end{align}
similarly to \eqref{sigest2}, by \eqref{sigeat} and mean value Theorem we get
\begin{align}
    |\hat{I}_{21}(s)|&\leq \frac{1}{\epsilon}\int_{h}^{h+k}|\bm{\sigma}(\bm{x}_{s}^{\epsilon})|e^{-\frac{C_{\lambda_{\bm{\gamma}}}}{\epsilon}(t-s)} {d}t\nonumber\\
    &\leq \frac{1}{\epsilon}|\bm{\sigma}(\bm{x}_{s}^{\epsilon})|\left(\frac{\epsilon}{C_{\lambda_{\bm{\gamma}}}}(e^{-\frac{C_{\lambda_{\bm{\gamma}}}}{\epsilon}(h-s)}-e^{-\frac{C_{\lambda_{\bm{\gamma}}}}{\epsilon}(h+k-s)})\right)\nonumber\\
    &\leq\frac{1}{C_{\lambda_{\bm{\gamma}}}}|\bm{\sigma}(\bm{x}_{s}^{\epsilon})|\left(e^{-\frac{C_{\lambda_{\bm{\gamma}}}}{\epsilon}(h-s)}-e^{-\frac{C_{\lambda_{\bm{\gamma}}}}{\epsilon}(h+k-s)}\right)\nonumber\\
    &\leq \frac{1}{C_{\lambda_{\bm{\gamma}}}}|\bm{\sigma}(\bm{x}_{s}^{\epsilon})|\left(\frac{C_{\lambda_{\bm{\gamma}}}}{\epsilon}e^{-\frac{C_{\lambda_{\bm{\gamma}}}}{\epsilon}\theta}k\right),
\end{align}
and hence
\begin{align}\label{I21con}
    |\hat{I}_{21}(s)|^2 \leq  \frac{1}{C^2_{\lambda_{\bm{\gamma}}}}|\bm{\sigma}(\bm{x}_{s}^{\epsilon})|^2\left(e^{-\frac{C_{\lambda_{\bm{\gamma}}}}{\epsilon}(h-s)}-e^{-\frac{C_{\lambda_{\bm{\gamma}}}}{\epsilon}(h+k-s)}\right)^2.
\end{align}
Similarly,
\begin{align}\label{I22con}
     |\hat{I}_{22}(s)|^2 \leq  \frac{1}{C^2_{\lambda_{\bm{\gamma}}}}|\bm{\sigma}(\bm{x}_{s}^{\epsilon})|^2\left(1-e^{-\frac{C_{\lambda_{\bm{\gamma}}}}{\epsilon}(h+k-s)}\right)^2
     \leq  \frac{1}{C^2_{\lambda_{\bm{\gamma}}}}|\bm{\sigma}(\bm{x}_{s}^{\epsilon})|^2.
\end{align}
By \eqref{I21con}-\eqref{I22con}, Cauchy-Schwarz inequality, and Burkh\"{o}lder inequality
\begin{align}\label{I2squcon}
    \mathbb {E}\left|\widetilde{I}_{2}(h)\right|^2&\leq C(\mathbb{E}\left|\int_0^h \hat{I}_{21}(s) {d} W_s\right|^2+\mathbb {E}\left|\int_h^{h+k} \hat{I}_{22}(s) {d}W_s\right|^2)\nonumber\\
    &\leq C\left(\mathbb {E}\int_0^h \left|\hat{I}_{21}(s)\right|^2 {d}s+\mathbb {E}\int_h^{h+k} \left|\hat{I}_{22}(s)\right|^2 {d}s\right)\nonumber\\
    &\leq C\Big(\mathbb {E}\int_0^h \frac{1}{C^2_{\lambda_{\bm{\gamma}}}}|\bm{\sigma}(\bm{x}_{s}^{\epsilon})|^2\left(\frac{2C_{\lambda_{\bm{\gamma}}}}{\epsilon}e^{-\frac{2C_{\lambda_{\bm{\gamma}}}}{\epsilon}\theta}k\right)^2 {d}s +\mathbb {E}\int_h^{h+k} \frac{1}{C^2_{\lambda_{\bm{\gamma}}}}|\bm{\sigma}(\bm{x}_{s}^{\epsilon})|^2 {d}s\Big).
\end{align}
Hence by \eqref{I2squcon}, \eqref{xsqubound} and assumption \ref{a2}, we get
\begin{align}
    \mathbb {E}\left|\widetilde{I}_{2}(h)\right|^2&\leq \frac{C}{C^2_{\lambda_{\bm{\gamma}}}}\Big(\mathbb {E}\int_0^h k^2|\bm{x}_{s}^{\epsilon}|^2 {d}s +\mathbb {E}\int_h^{h+k} |\bm{x}_{s}^{\epsilon}|^2 {d}s\Big)\nonumber\\
    &\leq \frac{Ck}{C^2_{\lambda_{\bm{\gamma}}}}\Big(hk +1\Big).
\end{align}
For $\widetilde{I}_{3}(h)$, similarly to \eqref{est3}, by Cauchy-Schwarz's inequality,  H\"{o}lder inequality and (\ref{conest}), we have
\begin{eqnarray}\label{I3squcon}
\mathbb {E}|\widetilde{I}_{3}(h)|^{2}  &=&  \mathbb {E}\Big|\int_{h}^{h+k}\phi(t)\bm{v}_{0} {d}t\Big|^{2}
\nonumber\\
&\leq&k\mathbb {E}\int_{h}^{h+k}\Big|\phi(t)\bm{v}_{0}\Big|^{2} {d}t
\nonumber\\
&\leq&k\mathbb {E}\int_{h}^{h+k}e^{-\frac{2C_{\lambda_{\bm{\gamma}}}}{\epsilon}t}\Big|\bm{v}_{0}\Big|^{2} {d}t
\nonumber\\
&=&
\frac{k\epsilon}{2C_{\lambda_{\bm{\gamma}}}}\Big|\bm{v}_{0}\Big|^{2}\left(e^{-\frac{2C_{\lambda_{\bm{\gamma}}}}{\epsilon}h}-e^{-\frac{2C_{\lambda_{\bm{\gamma}}}}{\epsilon}(h+k)}\right)\nonumber\\
&\leq& \frac{k\epsilon}{2C_{\lambda_{\bm{\gamma}}}}\Big|\bm{v}_{0}\Big|^{2}\left(\frac{2C_{\lambda_{\bm{\gamma}}}}{\epsilon}e^{-\frac{2C_{\lambda_{\bm{\gamma}}}}{\epsilon}\theta}k\right)\nonumber\\
&\leq& \frac{k^2\epsilon}{2C_{\lambda_{\bm{\gamma}}}}\Big|\bm{v}_{0}\Big|^{2}.
\end{eqnarray}
Thus, by \eqref{xconpre},
\eqref{I1squcon}, \eqref{I2squcon} and \eqref{I3squcon}, we get
\begin{align}
 \mathbb {E}\left|\bm{x}_{h+k}-\bm{x}_h\right|^2\leq C(\sum_{i=1}^3\mathbb {E}|\widetilde{I}_{i}(h)|^{2})\leq Ck.\nonumber
\end{align}
\end{proof}

\begin{lemma}{\bf{(\cite{shamgv:smklim}, Lemma~3)}}\label{l2vcon}
 Assume that assumption \ref{a1}-\ref{a4}  hold.  For any fixed $t\in[0,T]$,
    \begin{eqnarray*}
        \mathbb {E}[\epsilon|\bm{v}_{t}^{\epsilon}|^{2}]\leq C,
    \end{eqnarray*}
    where $C$ is a constant independent of $\epsilon$ and of $t\leq T$.
\end{lemma}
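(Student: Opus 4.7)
My plan is to prove the bound by applying It\^o's formula to $|\bm{v}_t^\epsilon|^2$ directly and exploiting the coercivity assumption \ref{a4} to produce a dissipation term strong enough to absorb the $\epsilon^{-2}$ noise contribution. The alternative route, plugging the variation-of-constants representation of $\bm{v}_t^\epsilon$ (already derived in the proof of Lemma \ref{lem1}) into the expectation, would also work via Cauchy--Schwarz and It\^o's isometry with the exponential decay bounds \eqref{ppinvf}--\eqref{conest}, but I find the It\^o route cleaner because the $\epsilon$-scaling arises automatically from balancing the drift and diffusion terms in the resulting ODE.

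Concretely, I would first write
\begin{eqnarray*}
d|\bm{v}_t^\epsilon|^2 &=& \tfrac{2}{\epsilon}\langle \bm{v}_t^\epsilon, F(\bm{x}_t^\epsilon)\rangle\,dt - \tfrac{2}{\epsilon}\langle \bm{v}_t^\epsilon, \bm{\gamma}(\bm{x}_t^\epsilon,\mathscr{L}_{\bm{x}_t^\epsilon})\bm{v}_t^\epsilon\rangle\,dt \\
&& {}+ \tfrac{2}{\epsilon}\langle \bm{v}_t^\epsilon, \bm{\sigma}(\bm{x}_t^\epsilon)\,d\bm{W}_t\rangle + \tfrac{1}{\epsilon^2}\,\mathrm{tr}(\bm{\sigma}\bm{\sigma}^*)(\bm{x}_t^\epsilon)\,dt.
\end{eqnarray*}
Assumption \ref{a4} gives $\langle v, \bm{\gamma} v\rangle = \langle v, \tfrac12(\bm{\gamma}+\bm{\gamma}^*)v\rangle \geq C_{\lambda_{\bm{\gamma}}}|v|^2$, so the dissipation provides $-\tfrac{2C_{\lambda_{\bm{\gamma}}}}{\epsilon}|\bm{v}_t^\epsilon|^2\,dt$. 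I would then handle the cross drift term by Young's inequality, $\tfrac{2}{\epsilon}|\langle \bm{v}_t^\epsilon, F\rangle| \leq \tfrac{C_{\lambda_{\bm{\gamma}}}}{\epsilon}|\bm{v}_t^\epsilon|^2 + \tfrac{C}{\epsilon C_{\lambda_{\bm{\gamma}}}}|F|^2$, so that a net dissipation of rate $C_{\lambda_{\bm{\gamma}}}/\epsilon$ survives.

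Taking expectations kills the martingale term, and invoking assumption \ref{a2} together with the moment bound $\mathbb{E}|\bm{x}_s^\epsilon|^2\leq C_T$ from Lemma \ref{lem1}, I obtain the linear ODE inequality
\begin{eqnarray*}
\tfrac{d}{dt}\mathbb{E}|\bm{v}_t^\epsilon|^2 \leq -\tfrac{C_{\lambda_{\bm{\gamma}}}}{\epsilon}\,\mathbb{E}|\bm{v}_t^\epsilon|^2 + \tfrac{C_T}{\epsilon} + \tfrac{C_T}{\epsilon^2}.
\end{eqnarray*}
Gronwall (or the explicit integrating factor $e^{C_{\lambda_{\bm{\gamma}}}t/\epsilon}$) yields
\begin{eqnarray*}
\mathbb{E}|\bm{v}_t^\epsilon|^2 \leq e^{-C_{\lambda_{\bm{\gamma}}}t/\epsilon}|\bm{v}_0|^2 + \tfrac{\epsilon}{C_{\lambda_{\bm{\gamma}}}}\Big(\tfrac{C_T}{\epsilon} + \tfrac{C_T}{\epsilon^2}\Big) \leq |\bm{v}_0|^2 + \tfrac{C_T}{C_{\lambda_{\bm{\gamma}}}} + \tfrac{C_T}{\epsilon C_{\lambda_{\bm{\gamma}}}},
\end{eqnarray*}
and multiplying by $\epsilon$ gives $\mathbb{E}[\epsilon|\bm{v}_t^\epsilon|^2]\leq C$ uniformly in $t\in[0,T]$ and $\epsilon\in(0,1)$.

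The only subtle point is verifying that the noise term $\epsilon^{-2}\mathrm{tr}(\bm{\sigma}\bm{\sigma}^*)$, which is the dominant contributor of size $\epsilon^{-2}$, is exactly compensated by the exponential relaxation rate $\epsilon^{-1}$ provided by $C_{\lambda_{\bm{\gamma}}}$, producing a net factor of $\epsilon^{-1}$ that multiplies down to $O(1)$ after scaling by $\epsilon$. This is where the strict positivity in assumption \ref{a4} is essential: without a uniform positive lower bound on $\lambda_1$, the dissipation rate could degenerate and the bound would fail. All other manipulations are standard once that coercivity is in hand, and the dependence of $\bm{\gamma}$ on the law $\mathscr{L}_{\bm{x}_t^\epsilon}$ causes no difficulty because the coercivity bound \eqref{e4} is uniform in $\mu\in\mathscr{P}_2$.
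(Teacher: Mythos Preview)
Your argument is correct and is essentially the standard route to this bound. Note, however, that the paper does not give its own proof of this lemma: it simply quotes the result from \cite{shamgv:smklim}, Lemma~3, so there is no in-paper proof to compare against. The It\^o-formula-plus-coercivity argument you outline is precisely the approach used in that reference (adapted here only in that the damping $\bm{\gamma}$ now depends also on the law $\mathscr{L}_{\bm{x}_t^\epsilon}$, which, as you correctly observe, is harmless because the lower bound in assumption~\ref{a4} is uniform in $\mu$).

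One small technical point worth tightening: to take expectations and drop the stochastic integral $\tfrac{2}{\epsilon}\int_0^t\langle \bm{v}_s^\epsilon,\bm{\sigma}(\bm{x}_s^\epsilon)\,d\bm{W}_s\rangle$ you implicitly assume it is a true martingale, which requires knowing in advance that $\mathbb{E}\int_0^T|\bm{v}_s^\epsilon|^2\|\bm{\sigma}(\bm{x}_s^\epsilon)\|^2\,ds<\infty$. The cleanest fix is to localize with the stopping times $\tau_n=\inf\{t:|\bm{v}_t^\epsilon|\geq n\}$, run your differential inequality up to $t\wedge\tau_n$, and then pass to the limit $n\to\infty$ using Fatou's lemma; the bound you obtain is independent of $n$, so the limit is immediate. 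With that standard localization inserted, your proof is complete.
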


\begin{remark}
Note that by Lemma \ref{lem1} $\bm{x}_t^{\epsilon}$ is tightness, let $\mathcal{K}:=\overline{\{\mathscr{L}_{\bm{x}_t^{\epsilon}}: \forall \epsilon>0, t\in[0,T]\}}$, by Prokhorov Theorem,  $\mathcal{K}$ is tight and relatively compact in $\mathscr{P}_{2}$.
\end{remark}

Now we prove a lemma which is useful in the proof of the limiting equation.
\begin{lemma}\label{gl2conall}
    Assume that assumption \ref{a1}-\ref{a2} hold, if $g(x,\mu,y):\mathbb{R}^{d}\times \mathscr{P}_{2}\times\mathbb{R}^{d}:\rightarrow\mathbb{R}$ such that $|g(x,\mu,y)|\leq C_{T}$,  for all $x\in K$
    \begin{eqnarray}\label{gvcon1}
        \lim_{\epsilon\rightarrow0}\mathbb {E}\Big(\sup_{0\leq t\leq T}\Big|\widetilde{\mathbb {E}}(\int_{0}^{t}g(\bm{x}_{s}^{\epsilon},\mathscr{L}_{\bm{x}_{s}^{\epsilon}},\widetilde{\bm{x}_{s}^{\epsilon}})\epsilon(\bm{v}_{s}^{\epsilon})_{i} {d}s)\Big|\Big)^{2} = 0,
    \end{eqnarray}
     \begin{eqnarray}\label{gvcon2}
        \lim_{\epsilon\rightarrow0}\mathbb {E}\Big(\sup_{0\leq t\leq T}\Big|\widetilde{\mathbb {E}}(\int_{0}^{t}g(\bm{x}_{s}^{\epsilon},\mathscr{L}_{\bm{x}_{s}^{\epsilon}},\widetilde{\bm{x}_{s}^{\epsilon}})\epsilon(\widetilde{\bm{v}}_{s}^{\epsilon})_{i} {d}s)\Big|\Big)^{2} = 0,
    \end{eqnarray}
     \begin{eqnarray}\label{gvcon3}
        \lim_{\epsilon\rightarrow0}\mathbb {E}\Big(\sup_{0\leq t\leq T}\Big|\widetilde{\mathbb {E}}(\int_{0}^{t}g(\bm{x}_{s}^{\epsilon},\mathscr{L}_{\bm{x}_{s}^{\epsilon}},\widetilde{\bm{x}_{s}^{\epsilon}})\epsilon(\widetilde{\bm{v}}_{s}^{\epsilon})_{i} {d}(\bm{W}_{s})_{j})\Big|\Big)^{2} = 0,
    \end{eqnarray}
       \begin{eqnarray}\label{gvcon4}
        \lim_{\epsilon\rightarrow0}\mathbb {E}\Big(\sup_{0\leq t\leq T}\Big|\widetilde{\mathbb {E}}(\int_{0}^{t}g(\bm{x}_{s}^{\epsilon},\mathscr{L}_{\bm{x}_{s}^{\epsilon}},\widetilde{\bm{x}_{s}^{\epsilon}})\epsilon(\bm{v}_{s}^{\epsilon})_{i} {d}(\widetilde{W}_{s})_{j})\Big|\Big)^{2} = 0.
    \end{eqnarray}
    \end{lemma}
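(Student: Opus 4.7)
The plan is to derive all four estimates from the uniform bound $\mathbb{E}[\epsilon|\bm{v}_t^\epsilon|^2]\le C$ of Lemma~\ref{l2vcon} combined with the uniform boundedness $|g|\le C_T$. The core observation is that Cauchy--Schwarz gives $\mathbb{E}[\epsilon|\bm{v}_t^\epsilon|]\le\sqrt\epsilon\,\sqrt{\mathbb{E}[\epsilon|\bm{v}_t^\epsilon|^2]}\le C\sqrt\epsilon$, so Lemma~\ref{l2vcon} converts into a smallness of order $\sqrt\epsilon$ on the first moment of $\epsilon\bm{v}^\epsilon$, which is what drives each of (\ref{gvcon1})--(\ref{gvcon4}).

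For (\ref{gvcon1}) the velocity factor $\epsilon(\bm{v}_s^\epsilon)_i$ carries no tilde-randomness, so I pull out the bound $|\widetilde{\mathbb{E}}[g(\bm{x}_s^\epsilon,\mathscr{L}_{\bm{x}_s^\epsilon},\widetilde{\bm{x}}_s^\epsilon)]|\le C_T$ and dominate the remainder by $C_T\int_0^T\epsilon|\bm{v}_s^\epsilon|\,ds$; Cauchy--Schwarz in time combined with Lemma~\ref{l2vcon} then gives $\mathbb{E}\sup_t|\cdots|^2\le C_T^2 T\,\mathbb{E}\int_0^T\epsilon^2|\bm{v}_s^\epsilon|^2\,ds\le CT\epsilon$. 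For (\ref{gvcon2}), Fubini exchanges $\widetilde{\mathbb{E}}$ with $\int_0^t(\cdot)\,ds$ and Cauchy--Schwarz inside $\widetilde{\mathbb{E}}$ yields the \emph{deterministic} pointwise bound $|\widetilde{\mathbb{E}}[g\,\epsilon(\widetilde{\bm{v}}_s^\epsilon)_i]|\le C_T\sqrt{\widetilde{\mathbb{E}}[\epsilon^2|\widetilde{\bm{v}}_s^\epsilon|^2]}\le C\sqrt\epsilon$, so $\sup_t|\cdots|^2\le CT^2\epsilon$, which vanishes.

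For (\ref{gvcon3}) I apply the stochastic Fubini theorem to bring $\widetilde{\mathbb{E}}$ inside the $d\bm{W}_s$-integral; this is justified by $\mathbb{E}\widetilde{\mathbb{E}}\int_0^T|g\,\epsilon\widetilde{\bm{v}}_i|^2\,ds<\infty$ coming from $|g|\le C_T$ and Lemma~\ref{l2vcon}. The resulting integrand $\widetilde{\mathbb{E}}[g\,\epsilon\widetilde{\bm{v}}_i]$ is $\mathcal{F}_s$-adapted and enjoys the same $O(\sqrt\epsilon)$ pointwise bound as in (\ref{gvcon2}), after which Doob's maximal inequality and It\^o isometry close the estimate with order $\epsilon$. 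Case (\ref{gvcon4}) is structurally different and essentially trivial: with $\omega\in\Omega$ viewed as a parameter, the integrand $g\,\epsilon(\bm{v}_s^\epsilon)_i$ is $\widetilde{\mathcal{F}}_s$-adapted in $\widetilde\omega$ and pathwise square-integrable in $s$ ($\mathbb{P}$-a.s.), so the stochastic integral against the independent tilde-Brownian motion $\widetilde{\bm{W}}_j$ is a genuine $\widetilde{\mathcal{F}}$-martingale with vanishing $\widetilde{\mathbb{E}}$-mean; hence the expression inside $\sup_t$ is identically zero $\mathbb{P}$-almost surely, and (\ref{gvcon4}) holds exactly, not merely in the limit. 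The only genuinely delicate step is the stochastic Fubini exchange in (\ref{gvcon3}), which I expect to be the main obstacle in the write-up; I will rely on the standard version applied to the joint filtration $\mathcal{F}_t\vee\widetilde{\mathcal{F}}_t$ on $\Omega\times\widetilde{\Omega}$, with respect to which $\bm{W}$ remains a Brownian motion because $\widetilde{\mathcal{F}}$ is independent of $\bm{W}$.
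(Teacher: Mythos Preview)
Your proposal is correct. For (\ref{gvcon1}) and (\ref{gvcon2}) your argument is essentially the paper's: bound $|g|\le C_T$, then H\"older/Cauchy--Schwarz plus Lemma~\ref{l2vcon} to extract the factor $\epsilon$. Your treatment of (\ref{gvcon2}) via the deterministic pointwise bound $|\widetilde{\mathbb E}[g\,\epsilon(\widetilde{\bm v}_s^\epsilon)_i]|\le C\sqrt\epsilon$ is a slight sharpening of the paper's chain of inequalities, but the idea is the same.

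The routes diverge for the two stochastic-integral claims. For (\ref{gvcon3}) the paper does \emph{not} commute $\widetilde{\mathbb E}$ through $d\bm W$; instead it uses Jensen $|\widetilde{\mathbb E}(\cdot)|^2\le\widetilde{\mathbb E}|\cdot|^2$ and $\sup\widetilde{\mathbb E}\le\widetilde{\mathbb E}\sup$, swaps $\mathbb E$ and $\widetilde{\mathbb E}$ by Tonelli, and then applies Doob + It\^o isometry in the $\omega$-variable for $\widetilde\omega$ fixed. This sidesteps the stochastic Fubini step that you flag as the delicate point; your argument is correct but the paper's is lighter. For (\ref{gvcon4}) the paper again runs the Jensen + Doob + It\^o machinery to get an $O(\epsilon)$ bound, whereas you observe, more elegantly, that for $\mathbb P$-a.e.\ $\omega$ the integrand is $\widetilde{\mathcal F}_s$-adapted and pathwise square-integrable, so the $d\widetilde W$-integral is a genuine $\widetilde{\mathcal F}$-martingale with zero $\widetilde{\mathbb E}$-mean and the expression vanishes identically. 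Both approaches are valid; yours is sharper for (\ref{gvcon4}) and the paper's is more robust for (\ref{gvcon3}).
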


\begin{proof}
First, by H\"{o}lder inequality and Lemma \ref{l2vcon} we have
 \begin{eqnarray}
      E\Big(\sup_{0\leq t\leq T}\Big|\widetilde{\mathbb {E}}(\int_{0}^{t}g(\bm{x}_{s}^{\epsilon},\mathscr{L}_{\bm{x}_{s}^{\epsilon}},\widetilde{\bm{x}_{s}^{\epsilon}})\epsilon(\bm{v}_{s}^{\epsilon})_{i} {d}s)\Big|\Big)^{2}
      &\leq& \mathbb {E}\Big(\widetilde{\mathbb {E}}(\sup_{0\leq t\leq T}\int_{0}^{t}\Big|g(\bm{x}_{s}^{\epsilon},\mathscr{L}_{\bm{x}_{s}^{\epsilon}},\widetilde{\bm{x}_{s}^{\epsilon}})\epsilon(\bm{v}_{s}^{\epsilon})_{i}\Big| {d}s)\Big)^{2}
      \nonumber\\
      &\leq&
      \mathbb {E}\Big(\widetilde{\mathbb{E}}(\int_{0}^{T}\Big|g(\bm{x}_{s}^{\epsilon},\mathscr{L}_{\bm{x}_{s}^{\epsilon}},\widetilde{\bm{x}_{s}^{\epsilon}})\epsilon(\bm{v}_{s}^{\epsilon})_{i}\Big| {d}s)\Big)^{2}
        \nonumber\\
        &\leq& T\mathbb {E}\Big(\widetilde{\mathbb {E}}(\int_{0}^{T}\Big|g(\bm{x}_{s}^{\epsilon},\mathscr{L}_{\bm{x}_{s}^{\epsilon}},\widetilde{\bm{x}_{s}^{\epsilon}})\epsilon(\bm{v}_{s}^{\epsilon})_{i}\Big|^{2} {d}s)\Big)
        \nonumber\\
        &\leq& C_{T}\widetilde{\mathbb {E}}\Big(\int_{0}^{T}\mathbb {E}\Big|\epsilon(\bm{v}_{s}^{\epsilon})_{i}\Big|^{2} {d}s\Big)
        \nonumber\\
        &\leq& C_{T}\epsilon.
    \end{eqnarray}
Therefore, we get (\ref{gvcon1}).
For (\ref{gvcon2}), which is proven similarly. By H\"{o}lder inequality and Lemma \ref{l2vcon} we have
 \begin{eqnarray}
        \mathbb {E}\Big(\sup_{0\leq t\leq T}\Big|\widetilde{\mathbb {E}}(\int_{0}^{t}g(\bm{x}_{s}^{\epsilon},\mathscr{L}_{\bm{x}_{s}^{\epsilon}},\widetilde{\bm{x}_{s}^{\epsilon}})\epsilon(\widetilde{\bm{v}}_{s}^{\epsilon})_{i} {d}s)\Big|\Big)^{2} &\leq& C_{T}\mathbb {E}\Big(\int_{0}^{T}\widetilde{\mathbb {E}}\Big|\epsilon(\widetilde{\bm{v}}_{s}^{\epsilon})_{i}\Big|^{2} {d}s\Big)
        \nonumber\\
        &\leq& C_{T}\epsilon,
    \end{eqnarray}
and hence (\ref{gvcon2}) is proven.

To estimate (\ref{gvcon3})-(\ref{gvcon4}), by It\^{o} isometry, we have
\begin{eqnarray}\label{g3pro1}
 \widetilde{\mathbb {E}}\Big(\int_{0}^{T}g(\bm{x}_{s}^{\epsilon},\mathscr{L}_{\bm{x}_{s}^{\epsilon}},\widetilde{\bm{x}_{s}^{\epsilon}})\epsilon(\bm{v}_{s}^{\epsilon})_{i} {d}(\widetilde{\mathbb {E}}_{s})_{j}\Big)^{2} &=& \widetilde{\mathbb {E}}\int_{0}^{T}\Big|g(\bm{x}_{s}^{\epsilon},\mathscr{L}_{\bm{x}_{s}^{\epsilon}},\widetilde{\bm{x}_{s}^{\epsilon}})\epsilon(\bm{v}_{s}^{\epsilon})_{i}\Big|^{2} {d}s
 \nonumber\\
 &\leq& C_{T}\int_{0}^{T}\widetilde{\mathbb {E}}\Big|\epsilon(\bm{v}_{s}^{\epsilon})_{i}\Big|^{2} {d}s.
\end{eqnarray}
By Doob's maximal inequality,
\begin{eqnarray}\label{g3pro2}
      \widetilde{\mathbb {E}}\Big(\sup_{0\leq t\leq T}\Big|\int_{0}^{t}g(\bm{x}_{s}^{\epsilon},\mathscr{L}_{\bm{x}_{s}^{\epsilon}},\widetilde{\bm{x}_{s}^{\epsilon}})\epsilon(\bm{v}_{s}^{\epsilon})_{i} {d}(\widetilde{W}_{s})_{j}\Big|\Big)^{2} &\leq& 4\widetilde{\mathbb {E}}\Big(\int_{0}^{T}g(\bm{x}_{s}^{\epsilon},\mathscr{L}_{\bm{x}_{s}^{\epsilon}},\widetilde{\bm{x}_{s}^{\epsilon}})\epsilon(\bm{v}_{s}^{\epsilon})_{i} {d}(\widetilde{W}_{s})_{j}\Big)^{2}
      \nonumber\\   &\leq&4C_{T}\int_{0}^{T}\widetilde{\mathbb {E}}\Big|\epsilon(\bm{v}_{s}^{\epsilon})_{i}\Big|^{2} {d}s.
\end{eqnarray}
Moreover, by H\"{o}lder inequality, (\ref{g3pro3}) and Lemma \ref{l2vcon} we have
 \begin{eqnarray}\label{g3pro3}
  &&\mathbb {E}\Big(\sup_{0\leq t\leq T}\Big|\widetilde{\mathbb {E}}(\int_{0}^{t}g(\bm{x}_{s}^{\epsilon},\mathscr{L}_{\bm{x}_{s}^{\epsilon}},\widetilde{\bm{x}_{s}^{\epsilon}})\epsilon(\bm{v}_{s}^{\epsilon})_{i} {d}(\widetilde{W}_{s})_{j})\Big|\Big)^{2}
  \nonumber\\
  &\leq&\mathbb {E}\Big(\widetilde{\mathbb {E}}\sup_{0\leq t\leq T}\Big|\int_{0}^{t}g(\bm{x}_{s}^{\epsilon},\mathscr{L}_{\bm{x}_{s}^{\epsilon}},\widetilde{\bm{x}_{s}^{\epsilon}})\epsilon(\bm{v}_{s}^{\epsilon})_{i} {d}(\widetilde{W}_{s})_{j}\Big|^{2}\Big)
  \nonumber\\
  &\leq& \mathbb {E}\Big(4C_{T}\int_{0}^{T}\widetilde{\mathbb {E}}\Big|\epsilon(\bm{v}_{s}^{\epsilon})_{i}\Big|^{2} {d}s\Big)
  \nonumber\\
  &=& 4C_{T}\widetilde{\mathbb {E}}\Big(\int_{0}^{T}\mathbb {E}\Big|\epsilon(\bm{v}_{s}^{\epsilon})_{i}\Big|^{2} {d}s\Big)
  \nonumber\\
  &\leq& C_{T}\epsilon,
\end{eqnarray}
and therefore (\ref{gvcon4}) is proven.

For (\ref{gvcon3}), by Doob's maximal inequality and It\^{o} isometry we have
\begin{eqnarray}\label{laspro}
    \mathbb {E}\Big(\sup_{0\leq t\leq T}\Big|\int_{0}^{t}g(\bm{x}_{s}^{\epsilon},\mathscr{L}_{\bm{x}_{s}^{\epsilon}},\widetilde{\bm{x}_{s}^{\epsilon}})\epsilon(\widetilde{\bm{v}}_{s}^{\epsilon})_{i} {d}(\bm{W}_{s})_{j}\Big|\Big)^{2}
&\leq&4C_{T}\int_{0}^{T}\mathbb {E}\Big|\epsilon(\widetilde{\bm{v}}_{s}^{\epsilon})_{i}\Big|^{2} {d}s
      \nonumber\\
      &=& 4C_{T}\int_{0}^{T}\Big|\epsilon(\widetilde{\bm{v}}_{s}^{\epsilon})_{i}\Big|^{2} {d}s.
\end{eqnarray}
Moreover, by H\"{o}lder inequality, (\ref{laspro}) and Lemma \ref{l2vcon} we have
\begin{eqnarray}
        &&\mathbb {E}\Big(\sup_{0\leq t\leq T}\Big|\widetilde{\mathbb {E}}(\int_{0}^{t}g(\bm{x}_{s}^{\epsilon},\mathscr{L}_{\bm{x}_{s}^{\epsilon}},\widetilde{\bm{x}_{s}^{\epsilon}})\epsilon(\widetilde{\bm{v}}_{s}^{\epsilon})_{i} {d}(\bm{W}_{s})_{j})\Big|\Big)^{2}
        \nonumber\\
        &\leq& \mathbb {E}\Big(\widetilde{\mathbb {E}}\sup_{0\leq t\leq T}\Big|\int_{0}^{t}g(\bm{x}_{s}^{\epsilon},\mathscr{L}_{\bm{x}_{s}^{\epsilon}},\widetilde{\bm{x}_{s}^{\epsilon}})\epsilon(\widetilde{\bm{v}}_{s}^{\epsilon})_{i} {d}(\bm{W}_{s})_{j}\Big|^{2}\Big)
  \nonumber\\
  &=&\widetilde{\mathbb {E}}\Big(\mathbb {E}\sup_{0\leq t\leq T}\Big|\int_{0}^{t}g(\bm{x}_{s}^{\epsilon},\mathscr{L}_{\bm{x}_{s}^{\epsilon}},\widetilde{\bm{x}_{s}^{\epsilon}})\epsilon(\widetilde{\bm{v}}_{s}^{\epsilon})_{i} {d}(\bm{W}_{s})_{j}\Big|^{2}\Big)
  \nonumber\\
  &\leq& \widetilde{\mathbb {E}}\Big(4C_{T}\int_{0}^{T}\mathbb {E}\Big|\epsilon(\widetilde{\bm{v}}_{s}^{\epsilon})_{i}\Big|^{2} {d}s\Big)
  \nonumber\\
  &\leq& C_{T}\epsilon,
    \end{eqnarray}
therefore (\ref{gvcon3}) is proven.
\end{proof}

\subsection{Well-posedness of equation (\ref{maineq}) }
In this section, we prove the existence and uniqueness of strong solutions to system (\ref{maineq}).
\begin{theorem}\label{thm1}
    Suppose that assumptions \ref{a1}-\ref{a3} hold. For any $\epsilon>0$, any given initial value $\bm{x}_0,\bm{v}_0 \in \mathbb{R}^{d}$ there exists a unique solution ${(\bm{x}_t^{\epsilon},\bm{v}_{t}^{\epsilon}),t>0}$ for system (\ref{maineq}) and
    \begin{eqnarray}
\left\{\begin{array}{ll}
 \bm{x}_t^{\epsilon} = \bm{x}_{0}+\int_{0}^{t}\bm{v}_{s}^{\epsilon} {d}s,\\
 \bm{v}_{t}^{\epsilon} = \bm{v}_{0}+\frac{1}{\epsilon}\int_{0}^{t}F(\bm{x}_{s}^{\epsilon}) {d}s-\frac{1}{\epsilon}\int_{0}^{t}\bm{\gamma}( \bm{x}_{s}^{\epsilon},\mathscr{L}_{\bm{x}_{s}^{\epsilon}}) \bm{v}_{s}^{\epsilon} {d}s+\frac{1}{\epsilon}\int_{0}^{t}\bm{\sigma}(\bm{x}_{s}^{\epsilon}) {d}\bm{W}_{s}.
\end{array}
\right.
\end{eqnarray}

\end{theorem}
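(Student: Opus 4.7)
The plan is a Sznitman-type fixed-point argument on the space $C([0,T], \mathscr{P}_2)$ of continuous flows of laws, a standard approach for McKean--Vlasov SDEs that reduces the well-posedness of (\ref{maineq}) to two classical ingredients: well-posedness of the decoupled SDE obtained by freezing the law, and a contraction estimate for the induced map on flows.

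\textbf{Step 1 (decoupled SDE).} Given a flow $\mu = (\mu_t)_{t \in [0,T]}$ in $C([0,T], \mathscr{P}_2)$, I would consider the non-distribution-dependent SDE obtained by replacing $\mathscr{L}_{\bm{x}_t^\epsilon}$ with $\mu_t$ in (\ref{maineq}). Under Assumptions \ref{a1}--\ref{a3} its drift and diffusion coefficients are locally Lipschitz in $(\bm{x},\bm{v})$ and of linear growth (since $\bm{\gamma}$ is bounded by Assumption \ref{a3}, while $\bm{F}$, $\bm{\sigma}$ are globally Lipschitz by \ref{a1}--\ref{a2}), so classical It\^o theory yields a unique strong solution $(\bm{x}^\mu, \bm{v}^\mu)$ with $\mathbb{E}\sup_{t \le T}|\bm{x}^\mu_t|^2 < \infty$; the moment bound is obtained by the same variation-of-constants-plus-Fubini argument as in Lemma \ref{lem1} (with $\epsilon$ fixed here, so Assumption \ref{a4} is not required, only finiteness of the bound is).

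\textbf{Step 2 (fixed point on laws).} Define $\Phi : C([0,T], \mathscr{P}_2) \to C([0,T], \mathscr{P}_2)$ by $\Phi(\mu)_t := \mathscr{L}_{\bm{x}^\mu_t}$. Coupling the decoupled solutions for two flows $\mu, \nu$ via the same Brownian motion and using $\mathbb{W}(\Phi(\mu)_t, \Phi(\nu)_t)^2 \le \mathbb{E}|\bm{x}^\mu_t - \bm{x}^\nu_t|^2$, I would exploit the Lipschitz estimate $\parallel\bm{\gamma}(\bm{x}_1,\mu_1) - \bm{\gamma}(\bm{x}_2,\mu_2)\parallel \le C_T(|\bm{x}_1 - \bm{x}_2| + \mathbb{W}(\mu_1,\mu_2))$ from Assumption \ref{a3}, combined with the a priori $L^2$ bound on $\bm{v}^\mu$, to derive
$$\mathbb{E}\sup_{s \le t}|\bm{x}^\mu_s - \bm{x}^\nu_s|^2 \le C_{\epsilon,T}\int_0^t \Bigl(\mathbb{E}\sup_{r \le s}|\bm{x}^\mu_r - \bm{x}^\nu_r|^2 + \mathbb{W}(\mu_s, \nu_s)^2\Bigr)\,ds.$$
Gronwall's inequality followed by the standard Picard iteration argument then yields $\|\Phi^n(\mu) - \Phi^n(\nu)\|^2 \le (C_{\epsilon,T} T)^n/n!$ in the sup-Wasserstein metric, so $\Phi^n$ is a strict contraction for $n$ large and Banach's theorem produces a unique fixed point $\mu^*$. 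The pair $(\bm{x}^{\mu^*}, \bm{v}^{\mu^*})$ is then the unique strong solution of (\ref{maineq}), and pathwise uniqueness follows by applying the same Gronwall estimate to any two candidate solutions on a common probability space.

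\textbf{Main obstacle.} The delicate point is that the bilinear drift $\bm{\gamma}(\bm{x},\mu)\bm{v}$ is only locally Lipschitz in $(\bm{x},\bm{v})$, so a direct $L^2$ estimate on $\bm{v}^\mu - \bm{v}^\nu$ would generate an unbounded factor of $|\bm{v}|$. The workaround is to avoid estimating velocities pointwise and instead use the variation-of-constants representation introduced in the proof of Lemma \ref{lem1}: expressing $\bm{v}^\mu$ via the fundamental matrix of $-\bm{\gamma}(\bm{x}^\mu,\mu)/\epsilon$ and integrating in time writes $\bm{x}^\mu - \bm{x}^\nu$ directly in terms of differences of the coefficients $\bm{F}$, $\bm{\gamma}$, $\bm{\sigma}$ evaluated along the two flows, so the only surviving velocity factors multiply the initial data and are absorbed by the moment bounds already at hand.
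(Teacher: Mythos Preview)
Your Sznitman-style decoupling argument is a genuinely different route from the paper's. The paper works directly on the coupled process $\bm{z}_t=(\bm{x}_t,\bm{v}_t)$: it localizes via the stopping time $\tau(R)=\inf\{t:|\bm{z}_t|\ge R\}$, shows that the truncated Picard map
\[
G(\bm{z})_t=\bm{z}_0+\int_0^t\tilde A(\bm{z}_s)\chi_{\{s<\tau(R)\}}\,ds+\int_0^t\tilde B(\bm{z}_s,\mathscr{L}_{\bm{z}_s})\chi_{\{s<\tau(R)\}}\,ds+\int_0^t\tilde\sigma(\bm{z}_s)\chi_{\{s<\tau(R)\}}\,dW_s
\]
is a contraction on $L^2(\Omega,\mathbb{R}^{2d})$ over a short interval $[0,T_0]$ (so the McKean--Vlasov coupling and the local-Lipschitz drift are handled simultaneously by the cutoff), patches the intervals together, and finally invokes the a~priori bounds of Lemmas~\ref{lem1} and~\ref{l2vcon} to send $R\to\infty$. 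Your approach instead separates the law-dependence from the SDE theory: freeze $\mu$, solve the classical SDE, then contract on $C([0,T],\mathscr{P}_2)$. This is cleaner and more modular, at the price of needing a stability estimate for the decoupled solution in the frozen flow---which, as you correctly anticipate, forces the variation-of-constants representation in order to avoid a raw $|\bm{v}^\mu-\bm{v}^\nu|$ estimate.

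One caveat on your parenthetical that Assumption~\ref{a4} is not required in Step~1: the class $C_b^{1,(1,1)}$ only bounds the \emph{derivatives} of $\bm\gamma$, so $\bm\gamma$ itself may grow linearly and $\bm\gamma(\bm{x},\mu)\bm{v}$ can have quadratic growth in $(\bm{x},\bm{v})$; non-explosion then does not follow from linear-growth theory alone, and the bound $\|\phi(t)\phi^{-1}(s)\|\le e^{-C_{\lambda_{\bm\gamma}}(t-s)/\epsilon}$ underlying Lemma~\ref{lem1} genuinely uses the spectral lower bound. The paper has exactly the same hidden dependence (it appeals to Lemmas~\ref{lem1}--\ref{l2vcon}, both stated under Assumption~\ref{a4}, to conclude $\tau(R)\to\infty$), so this is a discrepancy in the theorem's hypotheses rather than a flaw specific to your plan.
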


\begin{proof}
Without loss of generality, let $\epsilon=1$.  Let $\bm{z}_t =(\bm{x}_t,  \bm{v}_t)$,  $\tilde{A}(\bm{z}_t)= (\bm{v}_t, F(\bm{x}_t) )$,
$\tilde{B}(\bm{z}_t,  \mathscr{L}_{\bm{z}_{t}})= (0, \bm{\gamma}(\bm{x}_t, \mathscr{L}_{{\bm{x}}_{t}} )\bm{v}_t)$, $\tilde{\sigma}(\bm{z}_t)= (0,  \bm{\sigma}(\bm{x}_t))$.
The equation~(\ref{maineq}) is equivalent to the following equation
\begin{eqnarray}\label{fixed-1}
{d}\bm{z}_t= \tilde{A}(\bm{z}_t ){d}t + \tilde{B}(\bm{z}_t,  \mathscr{L}_{\bm{z}_{t}}){d}t +\tilde{ {\sigma}}(\bm{z}_t) {d}W_t.
\end{eqnarray}
By assumptions \ref{a1}-\ref{a3}, the nonlinearity  $\tilde{B}(\bm{z}_t,  \mathscr{L}_{\bm{z}_{t}})$ is locally Lipschitz, that is for every $R>0$\,,  $\rho_1, \rho_2 \in  \mathscr{P}_{2}$ and $\bm{z}_{1}\,, \bm{z}_{2}\in  \mathbb{R}^{2d}$
\begin{eqnarray}\label{assump-1}
| \tilde{B}(z_1,\rho_1 ) -\tilde{B}(z_2,\rho_2) |\leq  L_{ R}[ W_1(\rho_1, \rho_2 )  +  |z_1-z_2|] ,
\end{eqnarray}
for some constant $L_{ R}$  with $|z_{1}|+|z_{2}|\leq R$\,.
Define the random stopping time $\tau(R)$ by
$$\tau(R) = \inf \{t>0: |\bm{z}_t | \geq R   \},    $$
and denote by $\chi_I$ the characteristic function of the set $I$.
Now, for $t \leq T$, we define
$$G(\bm{z}_t) :=  \bm{z}_0 + \int_0^t \tilde{A}(\bm{z}_s)\chi_{\{s< \tau(R) \}} {d}s + \int_0^t \tilde{B}(\bm{z}_t,  \mathscr{L}_{\bm{z}_{s}})(z_s)\chi_{\{s< \tau(R) \}} {d}s +\int_0^t \tilde{\sigma}(\bm{z}_s) \chi_{\{s< \tau(R) \}}{d}W_s, $$
for any $\bm{z}_1=(\bm{x}_1, \bm{v}_1)$, $\bm{z}_2=(\bm{x}_2, \bm{v}_2)$ and $\bm{z}_1|_{t=0}= \bm{z}_2|_{t=0} = \bm{z}_0$,  using assumption \ref{a1}, then
\begin{eqnarray}
 | \int_0^t(\tilde{A}(\bm{z}_1 )- \tilde{A}(\bm{z}_2))\chi_{\{s< \tau(R) \}}  {d}s |^2
&\leq & T \int_0^t |\bm{v}_1- \bm{v}_2|^2+  |F(\bm{x}_1)- F(\bm{x}_2)|^2 {d}s  \nonumber\\
&\leq & T\int_0^t |\bm{v}_1- \bm{v}_2|^2+ C^2 |\bm{x}_1 - \bm{x}_2 |^2 {d}s  \nonumber\\
&\leq& C_T \int_0^t | \bm{z}_1 -  \bm{z}_2 |^2 {d}s .
\end{eqnarray}
Using (\ref{assump-1}), then
\begin{eqnarray}
|\int_0^t[ \tilde{B}(\bm{z}_1,  \mathscr{L}_{\bm{z}_{1}})(\bm{z}_1) - \tilde{B}(\bm{z}_2,  \mathscr{L}_{\bm{z}_{2}})(\bm{z}_1) ] \chi_{\{s< \tau(R) \}} {d}s |^2
  &\leq&  L_{R}T\int_0^t |\bm{z}_1- \bm{z}_2|^2 + \mathbb{W}^2( \mathscr{L}_{{\bm{x}}_{1}},  \mathscr{L}_{{\bm{x}}_{2}}) {d}s  \nonumber\\
 &\leq& C_{R,T }\int_0^t | \bm{z}_1 -  \bm{z}_2 |^2 {d}s .
\end{eqnarray}
By the Cauchy-Schwarz inequality,
\begin{eqnarray}
 \mathbb{E}\sup_{t\in [0,T]}  |\int_0^t ( \tilde{ {\sigma}}(\bm{z}_1  )- \tilde{  {\sigma}}(\bm{z}_2  )) \chi_{\{s< \tau(R) \}} {d}W_s |^2
&\leq &  \mathbb{E} ( \int_0^T |\tilde{ {\sigma}}(\bm{z}_1 ) - \tilde{ {\sigma}}(\bm{z}_2 )| {d}s)^2   \nonumber\\
&\leq & T C^2  \int_0^t\mathbb {E}[| \bm{\sigma}(\bm{x}_1 )- \bm{\sigma}(\bm{x}_2 ) |^{2} ]  {d}s ]\nonumber\\
&\leq & C_T   \int_0^t\mathbb {E} | \bm{z}_1 - \bm{z}_2  |^{2}   {d}s  .
\end{eqnarray}
Hence, the mapping $G$ is Lipschiz continuous in $L^2(\Omega, \mathbb{R}^{2d} )$, and the Lipschitz constant is strictly less than 1 in the interval $[0, T_0]$ for some $T=T_0$ sufficiently small. As the same arguments can be repeated in the $[T_0, 2T_0]$, $[2T_0, 3T_0]$ and so on.
 We obtain that system~(\ref{fixed-1}) has  a unique solution $ \bm{z}_{t}$ in $[0,T]$   for $t< \tau(R)\wedge T $.  As follows from
 Lemma~\ref{lem1} and  Lemma~\ref{l2vcon}, $\tau(R) \rightarrow  \infty $ as $R \rightarrow \infty$. The proof is complete.
\end{proof}

Combining Lemma \ref{lem1} with Theorem \ref{thm1} and assumptions \ref{a1}, \ref{a4}, Lemma 2 in \cite{shamgv:smklim} is true and we restate it for our purposes.
\begin{lemma}{\bf{(\cite{shamgv:smklim}, Lemma~2)}}\label{pconver}
    Suppose that assumptions \ref{a1}-\ref{a4} hold, then for any $p\geq 1$, $\epsilon \bm{v}_{t}^{\epsilon}\rightarrow0$ as $\epsilon\rightarrow0$ in $L^{p}$ with respect to $C_{\mathbb{R}^{d}}[0,T]$, and hence also in probability with respect to  $C_{\mathbb{R}^{d}}[0,T]$, i.e.,
    \begin{eqnarray}\label{lpconv}
        \lim_{\epsilon\rightarrow0}\mathbb {E}\Big[\Big(\sup_{0\leq t\leq T}\mid\epsilon \bm{v}_{t}^{\epsilon}\mid\Big)^{p}\Big] = 0,
    \end{eqnarray}
    and for all $\epsilon>0$,
    \begin{eqnarray}\label{proconv}
        \lim_{\epsilon\rightarrow0}\mathbb{P}\Big(\sup_{0\leq t\leq T}\mid\epsilon \bm{v}_{t}^{\epsilon}\mid>0\Big) = 0.
    \end{eqnarray}
Moreover, if $p=4$, then
        \begin{eqnarray}\label{fourconv}
        \mathbb {E}\Big[\Big(\sup_{0\leq t\leq T}\mid\epsilon \bm{v}_{t}^{\epsilon}\mid\Big)^{4}\Big] \leq C\epsilon.
    \end{eqnarray}
\end{lemma}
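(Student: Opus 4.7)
The plan is to exploit the Duhamel representation obtained in the proof of Lemma \ref{lem1},
\begin{align*}
\epsilon\bm{v}_t^{\epsilon} = \int_0^t\phi(t)\phi^{-1}(s)F(\bm{x}_s^{\epsilon})\,ds + \int_0^t\phi(t)\phi^{-1}(s)\bm{\sigma}(\bm{x}_s^{\epsilon})\,d\bm{W}_s + \epsilon\phi(t)\bm{v}_0,
\end{align*}
in combination with the pointwise exponential decay $\|\phi(t)\phi^{-1}(s)\|\le e^{-C_{\lambda_{\bm\gamma}}(t-s)/\epsilon}$ furnished by assumption \ref{a4}. The boundary term is dominated pathwise by $\epsilon|\bm{v}_0|e^{-C_{\lambda_{\bm\gamma}}t/\epsilon}$, and the drift term by $(\epsilon/C_{\lambda_{\bm\gamma}})\sup_{s\le T}|F(\bm{x}_s^{\epsilon})|$ as in \eqref{I1pre}. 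The linear growth from assumption \ref{a2}, together with the higher-moment extension of Lemma \ref{lem1} (obtained by rerunning its Duhamel/BDG scheme at the $p$-th power, which is routine because the distribution-dependence of $\bm\gamma$ only enters through $C_{\lambda_{\bm\gamma}}$), then produces $L^p$ bounds of order $\epsilon^p$ for both of these contributions.

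The stochastic integral is the delicate piece, since the factor $\phi(t)\phi^{-1}(s)$ ties the integrand to the upper limit $t$ and obstructs a direct BDG estimate. I would remove this coupling by setting
\begin{align*}
Z_t:=\int_0^t\phi(t)\phi^{-1}(s)\bm{\sigma}(\bm{x}_s^{\epsilon})\,d\bm{W}_s=\phi(t)\int_0^t\phi^{-1}(s)\bm{\sigma}(\bm{x}_s^{\epsilon})\,d\bm{W}_s,
\end{align*}
and differentiating through $\phi(t)$ to verify that $Z_t$ solves the linear fast-damped It\^o SDE
\begin{align*}
dZ_t=-\frac{1}{\epsilon}\bm{\gamma}(\bm{x}_t^{\epsilon},\mathscr{L}_{\bm{x}_t^{\epsilon}})Z_t\,dt+\bm{\sigma}(\bm{x}_t^{\epsilon})\,d\bm{W}_t,\qquad Z_0=0.
\end{align*}
Applying It\^o to $|Z_t|^{2q}$, assumption \ref{a4} supplies a damping $-2qC_{\lambda_{\bm\gamma}}\epsilon^{-1}|Z_t|^{2q}$ against which Young's inequality absorbs the $\|\bm{\sigma}(\bm{x}_t^{\epsilon})\|^2$-driven It\^o correction, leaving a source term of order $\epsilon^{q-1}$; a Gronwall step then gives $\mathbb{E}|Z_t|^{2q}\le C\epsilon^q$, and a standard BDG upgrade on the remaining martingale part raises this to $\mathbb{E}[\sup_{t\le T}|Z_t|^{2q}]\le C\epsilon^q$. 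Collecting the three pieces via Minkowski yields \eqref{lpconv} for every $p\ge 1$; \eqref{proconv} follows from Markov's inequality, and the specialization $q=2$ gives $\mathbb{E}[\sup_{t}|Z_t|^4]\le C\epsilon^2\le C\epsilon$, which dominates the deterministic $O(\epsilon^4)$ contributions and establishes \eqref{fourconv}.

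The main obstacle is the absorption bookkeeping for $|Z_t|^{2q}$: one must arrange the Young split so that the gain of $\epsilon^q$ from the fast damping survives the $\|\bm\sigma\|^2$ correction and, in the BDG step, the bracket $\langle Z,\bm\sigma\cdot\bm W\rangle$. Once this is done at a given $q$, the estimate feeds into itself inductively to handle the higher moment needed inside the BDG bound. The mean-field aspect of equation \eqref{maineq} is transparent here, because assumption \ref{a4} holds uniformly in $\mu\in\mathscr{P}_2$; this is why the argument of \cite{shamgv:smklim} carries over essentially verbatim once the representation is reformulated as the SDE for $Z_t$.
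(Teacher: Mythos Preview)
Your approach is correct and takes a genuinely different route from the paper's. The paper does not recompute anything: it observes that the hypotheses of Lemma~2 in \cite{shamgv:smklim} are met (via Lemma~\ref{lem1}, Theorem~\ref{thm1}, and assumptions~\ref{a1}, \ref{a4}), invokes that lemma to obtain an exponential tail bound of the form $\mathbb{P}\bigl(\sup_{t\le T}|\epsilon\bm{v}_t^\epsilon|^2>\kappa\bigr)\le C\kappa^{-1}e^{-c\kappa/\epsilon}$, and for \eqref{fourconv} simply integrates this tail via $\mathbb{E}[X^2]=\int_0^\infty 2x\,\mathbb{P}(X>x)\,dx$. Your route---Duhamel decomposition, recognition of the stochastic piece $Z_t$ as the solution of a fast-damped linear SDE, It\^o on $|Z_t|^{2q}$ with a Young/Gronwall absorption, and a BDG upgrade---is more self-contained and avoids importing the exponential-martingale machinery of \cite{shamgv:smklim}. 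It also makes transparent why the distribution dependence of $\bm\gamma$ is irrelevant here: only the uniform spectral lower bound of assumption~\ref{a4} enters.

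One small correction: the ``standard BDG upgrade'' you describe will \emph{not} preserve the exponent $\epsilon^q$ for the supremum. Once you drop the negative drift and bound $\mathbb{E}\sup_t|M_t|$ via $\langle M\rangle_T^{1/2}$, the bracket involves $\int_0^T|Z_s|^{4q-2}\,ds$; after the usual split $|Z_s|^{4q-2}\le\sup_t|Z_t|^{2q}\cdot|Z_s|^{2q-2}$ and Young's inequality, the residual term is $\int_0^T\mathbb{E}|Z_s|^{2q-2}\,ds\le CT\epsilon^{q-1}$ from the \emph{pointwise} bound, so you only obtain $\mathbb{E}\sup_t|Z_t|^{2q}\le C\epsilon^{q-1}$, not $C\epsilon^q$. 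Fortunately this is harmless for the lemma: for \eqref{lpconv} you may pass to a higher exponent and use H\"older, and for \eqref{fourconv} the case $q=2$ already gives exactly $C\epsilon$, matching the claim. Getting the sharper $\epsilon^q$ for the supremum genuinely requires the exponential-tail argument of \cite{shamgv:smklim} (or a stochastic-convolution maximal inequality), which is precisely what the paper's citation buys.
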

\begin{proof}
     Since conditions of Lemma 2 in \cite{shamgv:smklim} are satisfied from Lemma \ref{lem1}, Theorem \ref{thm1} and assumptions \ref{a1} and \ref{a4}, (\ref{lpconv}) and (\ref{proconv}) is immediately derived.
     Now, we turn to (\ref{fourconv}). Taking $q=2$ in Lemma 2 in \cite{shamgv:smklim}, we get
     \begin{eqnarray}
        \mathbb {E}\Big[\Big(\sup_{0\leq t\leq T}\mid\epsilon \bm{v}_{t}^{\epsilon}\mid^{2}\Big)^{2}\Big] &=& \int_{0}^{\infty} 2x \mathbb{P}\Big(\sup_{0\leq t\leq T}\mid\epsilon \bm{v}_{t}^{\epsilon}\mid^{2}>x\Big) {d}x
        \nonumber\\
        &\leq& \int_{0}^{\infty} 4\bm{\gamma} x \frac{2}{x} e^{-\frac{3C_{\lambda_{\bm{\gamma}}}}{32\epsilon\bm{\gamma}}} {d}x
        \nonumber\\
        &\leq& C\epsilon,
        \nonumber
    \end{eqnarray}
    where $\bm{\gamma} = C_{T}^{2}d$.
\end{proof}

\section{The rate of convergence}
  \setcounter{equation}{0}
  \renewcommand{\theequation}
{4.\arabic{equation}}
The following Theorem implies that the component $\bm{x}_t^{\epsilon}$ strongly converges to the solution $\bm{x}_{t}$ with convergence order $\sqrt{\epsilon}$.
\begin{theorem}\label{mainthm}
    Assume that assumptions \ref{a1}-\ref{a4} hold.  $\bm{x}_t^{\epsilon}$ is the solution of the equation~(\ref{maineq}),   for any $T>0$, and $\bm{x}_0,\bm{v}_0\in \mathbb{R}^{d}$,
    \begin{eqnarray}
        \mathbb{E}(\sup_{0\leq t\leq T}\mid \bm{x}_t^{\epsilon}-\bm{x}_{t}\mid^2)\leq C\sqrt{\epsilon},
    \end{eqnarray}
    where $C$ is a constant depending $T$ , $\bm{x}_0$ and $\bm{v}_0$.
     Here, $\bm{x}_{t}$ is the solution of the limiting equation~(\ref{maineq-limit})
\end{theorem}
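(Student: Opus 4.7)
The strategy is to rewrite $\bm{x}_t^{\epsilon}-\bm{x}_0=\int_0^t\bm{v}_s^{\epsilon}\,ds$ so that the drift and diffusion of the limiting equation~\eqref{maineq-limit} appear explicitly, with remainders estimable at order $\sqrt{\epsilon}$, and then close by a Gronwall comparison. From the velocity equation, solving formally for $\bm{v}_s^{\epsilon}\,ds$ via $\bm{\gamma}(\bm{x}_s^{\epsilon},\mathscr{L}_{\bm{x}_s^{\epsilon}})\bm{v}_s^{\epsilon}\,ds=F(\bm{x}_s^{\epsilon})\,ds+\bm{\sigma}(\bm{x}_s^{\epsilon})\,d\bm{W}_s-\epsilon\,d\bm{v}_s^{\epsilon}$ and using $d\bm{x}_s^{\epsilon}=\bm{v}_s^{\epsilon}\,ds$, one obtains
\begin{align*}
\bm{x}_t^{\epsilon}-\bm{x}_0=\int_0^t\bm{\gamma}^{-1}F\,ds+\int_0^t\bm{\gamma}^{-1}\bm{\sigma}\,d\bm{W}_s-\epsilon\int_0^t\bm{\gamma}^{-1}(\bm{x}_s^{\epsilon},\mathscr{L}_{\bm{x}_s^{\epsilon}})\,d\bm{v}_s^{\epsilon}.
\end{align*}

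\textbf{Integration by parts and the Lions--It\^o formula.} The last term is the engine that produces the correctors $S$ and $\widetilde{S}$. Since $\bm{x}_s^{\epsilon}$ is absolutely continuous, $\bm{\gamma}^{-1}(\bm{x}_s^{\epsilon},\mathscr{L}_{\bm{x}_s^{\epsilon}})$ has no martingale part and the distribution-dependent It\^o formula reduces to a chain rule involving $\partial_x\bm{\gamma}^{-1}$ and $\partial_\mu\bm{\gamma}^{-1}$. Integration by parts (without an It\^o correction, since the quadratic covariation vanishes) gives
\begin{align*}
-\epsilon\int_0^t\bm{\gamma}^{-1}_{ij}\,d\bm{v}^{\epsilon}_{j,s}=-\epsilon\bigl[\bm{\gamma}^{-1}_{ij}\bm{v}^{\epsilon}_{j,s}\bigr]_0^t+\int_0^t\partial_{x_l}\bm{\gamma}^{-1}_{ij}(\epsilon\bm{v}^{\epsilon}_{j,s}\bm{v}^{\epsilon}_{l,s})\,ds+\int_0^t\widetilde{\mathbb{E}}\bigl[(\partial_\mu\bm{\gamma}^{-1}_{ij})_l(\widetilde{\bm{x}_s^{\epsilon}})\,\epsilon\bm{v}^{\epsilon}_{j,s}\widetilde{\bm{v}^{\epsilon}_{l,s}}\bigr]\,ds.
\end{align*}
The boundary term satisfies $\mathbb{E}|\epsilon\bm{v}^{\epsilon}_s|^2\le C\sqrt{\epsilon}$ uniformly in $s\in[0,T]$ by the fourth-moment bound~\eqref{fourconv} of Lemma~\ref{pconver} together with Cauchy--Schwarz.

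\textbf{Identification of $S$ and $\widetilde{S}$ via the Lyapunov and Sylvester equations.} The next step is to replace the quadratic forms $\epsilon\bm{v}^{\epsilon}_j\bm{v}^{\epsilon}_l$ and $\epsilon\bm{v}^{\epsilon}_j\widetilde{\bm{v}^{\epsilon}_l}$ by $J_{jl}(\bm{x}_s,\mathscr{L}_{\bm{x}_s})$ and $\widetilde{J}_{jl}(\bm{x}_s,\widetilde{\bm{x}_s},\mathscr{L}_{\bm{x}_s})$. I apply It\^o's formula to the products $\bm{v}^{\epsilon}_j\bm{v}^{\epsilon}_l$ on $\Omega$ and to $\bm{v}^{\epsilon}_j\widetilde{\bm{v}^{\epsilon}_l}$ on the product space $\Omega\times\widetilde{\Omega}$ (where the driving noises are independent). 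Multiplying the resulting identity by $\epsilon^2$ and rearranging, the quadratic-variation contributions $(\bm{\sigma}\bm{\sigma}^*)_{jl}$ and $(\bm{\sigma}\widetilde{\bm{\sigma}}^*)_{jl}$ survive, while every remaining term has the form covered by Lemma~\ref{gl2conall} and therefore vanishes at rate $\sqrt{\epsilon}$ in $L^2$. In the integrated sense this yields
\begin{align*}
\bm{\gamma}(\epsilon\bm{v}\bm{v}^*)+(\epsilon\bm{v}\bm{v}^*)\bm{\gamma}^*=\bm{\sigma}\bm{\sigma}^*+O(\sqrt{\epsilon}),\qquad\bm{\gamma}(\epsilon\bm{v}\widetilde{\bm{v}}^*)+(\epsilon\bm{v}\widetilde{\bm{v}}^*)\widetilde{\bm{\gamma}}^*=\bm{\sigma}\widetilde{\bm{\sigma}}^*+O(\sqrt{\epsilon}).
\end{align*}
By Assumption~\ref{a4} the corresponding Lyapunov and Sylvester equations admit unique solutions $J,\widetilde{J}$, so contracting with $\partial_{x_l}\bm{\gamma}^{-1}_{ij}$ and $(\partial_\mu\bm{\gamma}^{-1}_{ij})_l$ shows that the two integrals from the previous step converge in $L^2$ to $\int_0^t S_i(\bm{x}_s,\mathscr{L}_{\bm{x}_s})\,ds$ and $\int_0^t\widetilde{S}_i(\bm{x}_s,\mathscr{L}_{\bm{x}_s})\,ds$ with error $O(\sqrt{\epsilon})$.

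\textbf{Gronwall closure and main obstacle.} Combining the three steps and subtracting~\eqref{maineq-limit}, the Lipschitz bounds from Assumptions~\ref{a1} and~\ref{a3} applied to $\bm{\gamma}^{-1},F,\bm{\sigma},S,\widetilde{S}$, together with Doob's inequality and the It\^o isometry for the diffusion increment, yield
\begin{align*}
\mathbb{E}\sup_{0\le s\le t}|\bm{x}_s^{\epsilon}-\bm{x}_s|^2\le C\int_0^t\mathbb{E}\sup_{0\le r\le s}|\bm{x}_r^{\epsilon}-\bm{x}_r|^2\,ds+C\sqrt{\epsilon},
\end{align*}
and Gronwall's inequality delivers the claimed rate. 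The main obstacle I expect is the quantitative identification of $\epsilon\bm{v}^{\epsilon}_j\widetilde{\bm{v}^{\epsilon}_l}$ with $\widetilde{J}_{jl}$: the processes $\bm{v}^{\epsilon}$ and $\widetilde{\bm{v}^{\epsilon}}$ live on independent copies of $\Omega$ but share the measure argument $\mathscr{L}_{\bm{x}^{\epsilon}}$, so the product-space It\^o calculus has to carefully track the (vanishing) cross-quadratic variation, and controlling the remainder of the Sylvester identity at the $\sqrt{\epsilon}$-rate requires sufficient regularity of $\widetilde{J}$, which ultimately rests on the spectral gap in Assumption~\ref{a4} combined with the $C^{1,(1,1)}_b$ smoothness in Assumption~\ref{a3}.
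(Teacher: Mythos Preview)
Your outline follows the paper's proof essentially step for step: rewrite $d\bm{x}^{\epsilon}$ via the velocity equation, integrate by parts, apply the Lions--It\^o chain rule to $\bm{\gamma}^{-1}(\bm{x}^{\epsilon}_s,\mathscr{L}_{\bm{x}^{\epsilon}_s})$, use the It\^o product rule on $(\epsilon\bm{v}^{\epsilon})(\epsilon\bm{v}^{\epsilon})^*$ and $(\epsilon\bm{v}^{\epsilon})(\epsilon\widetilde{\bm{v}^{\epsilon}})^*$ to expose the Lyapunov/Sylvester structure, and close with Gronwall.

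The one place where you are too quick is the passage from the differential identity
\[
\bm{\gamma}\,(\epsilon\bm{v}\bm{v}^*)\,dt+(\epsilon\bm{v}\bm{v}^*)\,\bm{\gamma}^*\,dt=\bm{\sigma}\bm{\sigma}^*\,dt+\text{remainders}
\]
to the conclusion $\int_0^t g\,(\epsilon\bm{v}\bm{v}^*)\,ds\approx\int_0^t gJ\,ds$. The remainders here are not all of the type covered by Lemma~\ref{gl2conall}: besides the $\epsilon\bm{v}F^*\,ds$ and $\epsilon\bm{v}(\bm{\sigma}\,d\bm{W})^*$ pieces, there is the full differential $d\bigl[(\epsilon\bm{v})(\epsilon\bm{v})^*\bigr]$ (and $d\bigl[(\epsilon\bm{v})(\epsilon\widetilde{\bm{v}})^*\bigr]$ in the Sylvester case), which requires Lemma~\ref{pconver} instead. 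The paper does not argue by ``unique solvability implies $\epsilon\bm{v}\bm{v}^*\approx J$''; rather it inverts the Lyapunov/Sylvester operator \emph{pointwise} via the explicit formula $Y=-\int_0^{\infty}e^{-\bm{\gamma}y}Ce^{-\bm{\gamma}^*y}\,dy$ (valid by Assumption~\ref{a4}), which expresses $\epsilon\bm{v}(\widetilde{\bm{v}})^*\,dt$ exactly as $\widetilde{J}\,dt$ plus integrals of the bounded kernel $\int_0^{\infty}(e^{-\bm{\gamma}y})_{jk_1}(e^{-\bm{\gamma}^*y})_{k_2l}\,dy$ against those remainder differentials. After contracting with $\partial_x\bm{\gamma}^{-1}$ or $\partial_\mu\bm{\gamma}^{-1}$, these become the paper's terms $I_6$ and $I_7$, estimated at order $\epsilon$ and $\sqrt{\epsilon}$ respectively. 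You should incorporate this explicit integral representation (or, equivalently, a quantitative bound on the inverse Lyapunov operator) to make the identification step rigorous.
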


\begin{proof}
Firstly, we show that
\begin{eqnarray}
    \bm{x}_t^{\epsilon} = \bm{x}_t^{\epsilon,1}+\bm{x}_t^{\epsilon,2},
\end{eqnarray}
where
\begin{eqnarray}\label{xe1}
    (\bm{x}_t^{\epsilon,1})_{i} &=& (\bm{x}_{0})_{i}+\int_{0}^{t}\bm{\gamma}_{ij}^{-1}( \bm{x}_{s}^{\epsilon},\mathscr{L}_{\bm{x}_{s}^{\epsilon}})F_{j}(\bm{x}_{s}^{\epsilon}) {d}s+\int_{0}^{t}\bm{\gamma}_{ij}^{-1}( \bm{x}_{s}^{\epsilon},\mathscr{L}_{\bm{x}_{s}^{\epsilon}})\bm{\sigma}_{jk}(\bm{x}_{s}^{\epsilon}) {d}(\bm{W}_{s})_{k}
    \nonumber\\
    &&- \bm{\gamma}_{ij}^{-1}( \bm{x}_t^{\epsilon},\mathscr{L}_{\bm{x}_t^{\epsilon}})\epsilon (\bm{v}_{t}^{\epsilon})_{j}+\bm{\gamma}_{ij}^{-1}( \bm{x}_{0},\mathscr{L}_{\bm{x}_{0}})\epsilon (\bm{v}_{0})_{j}
    \nonumber\\
    &&+(U_{t}^{\epsilon})_{i}+\int_{0}^{t} [\partial_{x_{l}}\bm{\gamma}_{ij}^{-1}( \bm{x}_{s}^{\epsilon},\mathscr{L}_{\bm{x}_{s}^{\epsilon}})]J_{jl}(\bm{x}_{s}^{\epsilon},\mathscr{L}_{\bm{x}_{s}^{\epsilon}}) {d}s+\int_{0}^{t} [\partial_{x_{l}}\bm{\gamma}_{ij}^{-1}( \bm{x}_{s}^{\epsilon},\mathscr{L}_{\bm{x}_{s}^{\epsilon}})]\times
    \nonumber\\
    &&~~~~~~~~\Big[-\int_{0}^{\infty}(e^{-\bm{\gamma}( \bm{x}_{s}^{\epsilon},\mathscr{L}_{\bm{x}_{s}^{\epsilon}})y})_{jk_{1}}(e^{-\bm{\gamma}^{*}( \bm{x}_{s}^{\epsilon},\mathscr{L}_{\bm{x}_{s}^{\epsilon}})y})_{k_{2}l} {d}y\Big] {d}[(\epsilon \bm{v}_{s}^{\epsilon})_{k_{1}}(\epsilon \bm{v}_{s}^{\epsilon})_{k_{2}}]
    \nonumber\\
    &&+\int_{0}^{t}\widetilde{\mathbb {E}}\Big[ (\partial_{\mu}\bm{\gamma}_{ij}^{-1}( \bm{x}_{s}^{\epsilon},\mathscr{L}_{\bm{x}_{s}^{\epsilon}})(\widetilde{\bm{x}_{s}^{\epsilon}}))_{l}\widetilde{J}_{jl}
    (\bm{x}_{s}^{\epsilon},\widetilde{\bm{x}_{s}^{\epsilon}},\mathscr{L}_{\bm{x}_{s}^{\epsilon}})\Big] {d}s,
\end{eqnarray}
and
\begin{eqnarray}\label{xe2}
    (\bm{x}_t^{\epsilon,2})_{i} &=&  \widetilde{\mathbb {E}}\Big[\int_{0}^{t}(\partial_{\mu}\bm{\gamma}_{ij}^{-1}( \bm{x}_{s}^{\epsilon},\mathscr{L}_{\bm{x}_{s}^{\epsilon}})(\widetilde{\bm{x}_{s}^{\epsilon}}))_{l}\times
    \nonumber\\
    &&~~~~~~~~\Big(-\int_{0}^{\infty} (e^{-\bm{\gamma}( \bm{x}_t^{\epsilon},\mathscr{L}_{\bm{x}_t^{\epsilon}})y})_{jk_{1}}(e^{-\bm{\gamma}^{*}( \widetilde{\bm{x}_t^{\epsilon}},\mathscr{L}_{\bm{x}_t^{\epsilon}})y})_{k_{2}l} {d}y {d}[(\epsilon \bm{v}_{t}^{\epsilon})_{k_{1}}(\epsilon \widetilde{\bm{v}_{t}^{\epsilon}})^{*}_{k_{2}}]\Big)\Big]
    \nonumber\\
    &&+ \widetilde{\mathbb {E}}\Big[\int_{0}^{t}(\partial_{\mu}\bm{\gamma}_{ij}^{-1}( \bm{x}_{s}^{\epsilon},\mathscr{L}_{\bm{x}_{s}^{\epsilon}})(\widetilde{\bm{x}_{s}^{\epsilon}}))_{l}\times\Big(-\int_{0}^{\infty} (e^{-\bm{\gamma}( \bm{x}_t^{\epsilon},\mathscr{L}_{\bm{x}_t^{\epsilon}})y})_{jk_{1}} (e^{-\bm{\gamma}^{*}( \widetilde{\bm{x}_t^{\epsilon}},\mathscr{L}_{\bm{x}_t^{\epsilon}})y})_{k_{2}l} {d}y
    \nonumber\\
    &&~~~~~~~~~~~~~~~~\times[(\epsilon \bm{v}_{t}^{\epsilon}F^{*}(\widetilde{\bm{x}_t^{\epsilon}}))_{k_{1}k_{2}} {d}t+(\epsilon \bm{v}_{t}^{\epsilon}(\bm{\sigma}(\widetilde{\bm{x}_t^{\epsilon}}) {d}\widetilde{\bm{W}_{t}})^{*})_{k_{1}k_{2}}
    \nonumber\\
    &&~~~~~~~~~~~~~~~~~~~~~+ (F(\bm{x}_t^{\epsilon})(\epsilon \widetilde{\bm{v}_{t}^{\epsilon}})^{*})_{k_{1}k_{2}} {d}t+(\bm{\sigma}(\bm{x}_t^{\epsilon}) {d}\bm{W}_{t}(\epsilon \widetilde{\bm{v}_{t}^{\epsilon}})^{*})_{k_{1}k_{2}}]\Big)\Big].
\end{eqnarray}
In fact, to determine the limit of  equation(\ref{maineq}) as $\epsilon\rightarrow0$, we rewrite the equation for $\bm{v}_{t}^{\epsilon}$ as
    $$\bm{\gamma}( \bm{x}_t^{\epsilon},\mathscr{L}_{\bm{x}_t^{\epsilon}}) \bm{v}_{t}^{\epsilon} {d}t = F(\bm{x}_t^{\epsilon}) {d}t+\bm{\sigma}(\bm{x}_t^{\epsilon}) {d}\bm{W}_{t}-\epsilon  {d}\bm{v}_{t}^{\epsilon},$$
and hence rewrite the equation for $\bm{x}_t^{\epsilon}$ as
 \begin{eqnarray}\label{xinteq}
        \bm{x}_t^{\epsilon} &=& \bm{x}_{0}+\int_{0}^{t}\bm{\gamma}^{-1}( \bm{x}_{s}^{\epsilon},\mathscr{L}_{\bm{x}_{s}^{\epsilon}})F(\bm{x}_{s}^{\epsilon}) {d}s+\int_{0}^{t}\bm{\gamma}^{-1}( \bm{x}_{s}^{\epsilon},\mathscr{L}_{\bm{x}_{s}^{\epsilon}})\bm{\sigma}(\bm{x}_{s}^{\epsilon}) {d}\bm{W}_{s}
        \nonumber\\
        &-&\epsilon \int_{0}^{t}\bm{\gamma}^{-1}( \bm{x}_{s}^{\epsilon},\mathscr{L}_{\bm{x}_{s}^{\epsilon}}) {d}\bm{v}_{s}^{\epsilon}.
    \end{eqnarray}
To determine the limit of the expression (\ref{xinteq}) as $\epsilon\rightarrow0$, we consider its ith component. Integrating by parts the last term on the right-hand side of equation (\ref{xinteq}), we obtain
\begin{eqnarray}\label{intpareq}
    \epsilon \int_{0}^{t}\bm{\gamma}_{ij}^{-1}( \bm{x}_{s}^{\epsilon},\mathscr{L}_{\bm{x}_{s}^{\epsilon}}) {d}(\bm{v}_{s}^{\epsilon})_{j}
    &=& \bm{\gamma}_{ij}^{-1}( \bm{x}_t^{\epsilon},\mathscr{L}_{\bm{x}_t^{\epsilon}})\epsilon (\bm{v}_{t}^{\epsilon})_{j}-\bm{\gamma}_{ij}^{-1}( \bm{x}_{0},\mathscr{L}_{\bm{x}_{0}})\epsilon (\bm{v}_{0})_{j}
    \nonumber\\
    &-&\int_{0}^{t} \epsilon(\bm{v}_{s}^{\epsilon})_{j} {d}\bm{\gamma}_{ij}^{-1}( \bm{x}_{s}^{\epsilon},\mathscr{L}_{\bm{x}_{s}^{\epsilon}}).
\end{eqnarray}
Using It\^{o} formula for functions of measures (see, \cite{BLPR:mestdi}, \cite{HvS:mcsdme}), the last term can be rewritten as
\begin{eqnarray}\label{itomeaeq}
    \int_{0}^{t} \epsilon(\bm{v}_{s}^{\epsilon})_{j} {d}\bm{\gamma}_{ij}^{-1}( \bm{x}_{s}^{\epsilon},\mathscr{L}_{\bm{x}_{s}^{\epsilon}}) &=& \int_{0}^{t} \epsilon(\bm{v}_{s}^{\epsilon})_{j}(\bm{v}_{s}^{\epsilon})_{l}\partial_{x_{l}}\bm{\gamma}_{ij}^{-1}( \bm{x}_{s}^{\epsilon},\mathscr{L}_{\bm{x}_{s}^{\epsilon}}) {d}s
    \nonumber\\
    &+&\int_{0}^{t} \epsilon(\bm{v}_{s}^{\epsilon})_{j}\widetilde{\mathbb {E}}[(\widetilde{\bm{v}_{s}^{\epsilon}})_{l}(\partial_{\mu}\bm{\gamma}_{ij}^{-1}( \bm{x}_{s}^{\epsilon},\mathscr{L}_{\bm{x}_{s}^{\epsilon}})(\widetilde{\bm{x}_{s}^{\epsilon}}))_{l}] {d}s
    \nonumber\\
    &\triangleq& I_{1}+I_{2}.
\end{eqnarray}
Note that $\bm{x}_t^{\epsilon}$ has bounded variation, hence the It\^{o}   term in the above equation is zero.

 For the first term $I_{1}$, we use the result in \cite{shamgv:smklim} to get the following equality immediately
\begin{eqnarray}\label{i1laeq}
    I_{1} &=& (U_{t}^{\epsilon})_{i}+\int_{0}^{t} [\partial_{x_{l}}\bm{\gamma}_{ij}^{-1}( \bm{x}_{s}^{\epsilon},\mathscr{L}_{\bm{x}_{s}^{\epsilon}})]J_{jl}(\bm{x}_{s}^{\epsilon},\mathscr{L}_{\bm{x}_{s}^{\epsilon}}) {d}s+\int_{0}^{t} [\partial_{x_{l}}\bm{\gamma}_{ij}^{-1}( \bm{x}_{s}^{\epsilon},\mathscr{L}_{\bm{x}_{s}^{\epsilon}})]\times
    \nonumber\\
    &&\Big[-\int_{0}^{\infty}(e^{-\bm{\gamma}( \bm{x}_{s}^{\epsilon},\mathscr{L}_{\bm{x}_{s}^{\epsilon}})y})_{jk_{1}}(e^{-\bm{\gamma}^{*}( \bm{x}_{s}^{\epsilon},\mathscr{L}_{\bm{x}_{s}^{\epsilon}})y})_{k_{2}l} {d}y\Big] {d}[(\epsilon \bm{v}_{s}^{\epsilon})_{k_{1}}(\epsilon \bm{v}_{s}^{\epsilon})_{k_{2}}],
\end{eqnarray}
where
\begin{eqnarray}
    (U_{t}^{\epsilon})_{i} &=& 
    \int_{0}^{t} [\partial_{x_{l}}\bm{\gamma}_{ij}^{-1}( \bm{x}_{s}^{\epsilon},\mathscr{L}_{\bm{x}_{s}^{\epsilon}})]\times \Big[\int_{0}^{\infty}(e^{-\bm{\gamma}( \bm{x}_{s}^{\epsilon},\mathscr{L}_{\bm{x}_{s}^{\epsilon}})y})_{jk_{1}}(e^{-\bm{\gamma}^{*}( \bm{x}_{s}^{\epsilon},\mathscr{L}_{\bm{x}_{s}^{\epsilon}})y})_{k_{2}l} {d}y \times
    \nonumber\\
    &&\Big((\epsilon \bm{v}_{s}^{\epsilon}F^{*}(\bm{x}_{s}^{\epsilon}))_{k_{1}k_{2}} {d}s+(\epsilon \bm{v}_{s}^{\epsilon}(\bm{\sigma}(\bm{x}_{s}^{\epsilon}) {d}\bm{W}_{s})^{*})_{k_{1}k_{2}}+
    \nonumber\\
    &&(F(\bm{x}_{s}^{\epsilon})(\epsilon \bm{v}_{s}^{\epsilon})^{*})_{k_{1}k_{2}}+(\bm{\sigma}(\bm{x}_{s}^{\epsilon}) {d}\bm{W}_{s}(\epsilon \bm{v}_{s}^{\epsilon})^{*})_{k_{1}k_{2}}\Big)\Big],
    \nonumber
  \end{eqnarray}
  and
 \begin{eqnarray}\label{thm-new-1}
     J(\bm{x}_t^{\epsilon},\mathscr{L}_{\bm{x}_t^{\epsilon}}) &=& \int_{0}^{\infty} e^{-\bm{\gamma}( \bm{x}_t^{\epsilon},\mathscr{L}_{\bm{x}_t^{\epsilon}})y}(\bm{\sigma}(\bm{x}_t^{\epsilon})\bm{\sigma}^{*}(\bm{x}_t^{\epsilon}))e^{-\bm{\gamma}^{*}( \bm{x}_t^{\epsilon},\mathscr{L}_{\bm{x}_t^{\epsilon}})y} {d}y,
\end{eqnarray}
$J(\bm{x}_t^{\epsilon},\mathscr{L}_{\bm{x}_t^{\epsilon}}):\mathbb{R}^{d}\times \mathscr{P}_{2}\rightarrow\mathbb{R}^{d}$ is the solution to the Lyapunov equation
\begin{eqnarray}
\bm{\gamma}(\bm{x}_t^{\epsilon},\mathscr{L}_{\bm{x}_t^{\epsilon}})J(\bm{x}_t^{\epsilon},\mathscr{L}_{\bm{x}_t^{\epsilon}})+J(\bm{x}_t^{\epsilon},\mathscr{L}_{\bm{x}_t^{\epsilon}})\bm{\gamma}^{*}(\bm{x}_t^{\epsilon},\mathscr{L}_{\bm{x}_t^{\epsilon}}) = \bm{\sigma}(\bm{x}_t^{\epsilon})\bm{\sigma}^{*}(\bm{x}_t^{\epsilon}).
\end{eqnarray}
For the second term $I_{2}$, we have
\begin{eqnarray}\label{i2eq}
    I_{2} &=& \int_{0}^{t} \epsilon(\bm{v}_{s}^{\epsilon})_{j}\widetilde{\mathbb {E}}[(\widetilde{\bm{v}_{s}^{\epsilon}})_{l}(\partial_{\mu}\bm{\gamma}_{ij}^{-1}( \bm{x}_{s}^{\epsilon},\mathscr{L}_{\bm{x}_{s}^{\epsilon}})(\widetilde{\bm{x}_{s}^{\epsilon}}))_{l}] {d}s
\nonumber\\
    &=& \widetilde{\mathbb {E}}[\int_{0}^{t} \epsilon(\bm{v}_{s}^{\epsilon})_{j}(\widetilde{\bm{v}_{s}^{\epsilon}})_{l}(\partial_{\mu}\bm{\gamma}_{ij}^{-1}( \bm{x}_{s}^{\epsilon},\mathscr{L}_{\bm{x}_{s}^{\epsilon}})(\widetilde{\bm{x}_{s}^{\epsilon}}))_{l} {d}s],
    \nonumber
\end{eqnarray}
where the product $\epsilon(\bm{v}_{s}^{\epsilon})_{j}(\widetilde{\bm{v}_{s}^{\epsilon}})_{l}$ in the above integral is the $(j,l)$-entry of the (outer product) matrix $\epsilon \bm{v}_{s}^{\epsilon}(\widetilde{\bm{v}_{s}^{\epsilon}})^{*}$.
Using the method in \cite{shamgv:smklim}, we express this
matrix as a solution to a Sylevster equation.
Using the It\^{o} product formula, we obatin
\begin{eqnarray}\label{vveq}
     {d}[\epsilon \bm{v}_{t}^{\epsilon}(\epsilon \widetilde{\bm{v}_{t}^{\epsilon}})^{*}] &=&  {d}(\epsilon \bm{v}_{t}^{\epsilon})(\epsilon \widetilde{\bm{v}_{t}^{\epsilon}})^{*}+\epsilon \bm{v}_{t}^{\epsilon} {d}(\epsilon \widetilde{\bm{v}_{t}^{\epsilon}})^{*}+ {d}(\epsilon \bm{v}_{t}^{\epsilon}) {d}(\epsilon\widetilde{\bm{v}_{t}^{\epsilon}})^{*}
    \nonumber\\
    &=& \Big(F(\bm{x}_t^{\epsilon}) {d}t-\bm{\gamma}( \bm{x}_t^{\epsilon},\mathscr{L}_{\bm{x}_t^{\epsilon}}) \bm{v}_{t}^{\epsilon} {d}t+\bm{\sigma}(\bm{x}_t^{\epsilon}) {d}\bm{W}_{t}\Big)(\epsilon \widetilde{\bm{v}_{t}^{\epsilon}})^{*}
    \nonumber\\
    &+&\epsilon \bm{v}_{t}^{\epsilon}\Big(F^{*}(\widetilde{\bm{x}_t^{\epsilon}}) {d}t-(\widetilde{\bm{v}_{t}^{\epsilon}})^{*}\bm{\gamma}^{*}( \widetilde{\bm{x}_t^{\epsilon}},\mathscr{L}_{\bm{x}_t^{\epsilon}}) \mathrm{d}t+(\bm{\sigma}(\widetilde{\bm{x}_t^{\epsilon}})\mathrm{d}\widetilde{\bm{W}_{t}})^{*}\Big)
   \nonumber\\
    &+&  { \bm{\sigma}(\bm{x}_t^{\epsilon})\bm{\sigma}^{*}(\widetilde{\bm{x}_t^{\epsilon}}) {d}[W,\widetilde{W}](t)}.
\end{eqnarray}
Defining
\begin{eqnarray}\label{u1eq}
    \mathrm{d}\widehat{U_{t}^{\epsilon}} = \epsilon \bm{v}_{t}^{\epsilon}F^{*}(\widetilde{\bm{x}_t^{\epsilon}}) {d}t+\epsilon \bm{v}_{t}^{\epsilon}(\bm{\sigma}(\widetilde{\bm{x}_t^{\epsilon}}) {d}\widetilde{\bm{W}_{t}})^{*},
\end{eqnarray}
and
 \begin{eqnarray}\label{u2eq}
      {d}\widehat{\widehat{U_{t}^{\epsilon}}} = F(\bm{x}_t^{\epsilon})(\epsilon \widetilde{\bm{v}_{t}^{\epsilon}})^{*} {d}t+\bm{\sigma}(\bm{x}_t^{\epsilon}) {d}\bm{W}_{t}(\epsilon \widetilde{\bm{v}_{t}^{\epsilon}})^{*},
 \end{eqnarray}
we can rewrite equation (\ref{vveq}) as
\begin{eqnarray}\label{matrixeq1}
    &&(-\bm{\gamma}( \bm{x}_t^{\epsilon},\mathscr{L}_{\bm{x}_t^{\epsilon}})) \epsilon \bm{v}_{t}^{\epsilon}(\widetilde{\bm{v}_{t}^{\epsilon}})^{*} {d}t-\epsilon \bm{v}_{t}^{\epsilon}(\widetilde{\bm{v}_{t}^{\epsilon}})^{*}\bm{\gamma}^{*}( \widetilde{\bm{x}_t^{\epsilon}},\mathscr{L}_{\bm{x}_t^{\epsilon}})  {d}t
    \nonumber\\
    &=&  {d}[\epsilon \bm{v}_{t}^{\epsilon}(\epsilon \widetilde{\bm{v}_{t}^{\epsilon}})^{*}] - \bm{\sigma}(\bm{x}_t^{\epsilon})\bm{\sigma}^{*}(\widetilde{\bm{x}_t^{\epsilon}}) {d}t- {d}\widehat{U_{t}^{\epsilon}}- {d}\widehat{\widehat{U_{t}^{\epsilon}}}.
\end{eqnarray}
Denoting $\epsilon \bm{v}_{t}^{\epsilon}(\widetilde{\bm{v}_{t}^{\epsilon}})^{*} {d}t$ by $Y$, $-\bm{\gamma}( \bm{x}_t^{\epsilon},\mathscr{L}_{\bm{x}_t^{\epsilon}})$ by $A$, $\bm{\gamma}^{*}( \widetilde{\bm{x}_t^{\epsilon}},\mathscr{L}_{\bm{x}_t^{\epsilon}})$ by $B$ and the right hand side of equation (\ref{matrixeq1}) by $C$, we obtain
\begin{eqnarray}\label{matrixeq2}
    AY-YB = C,
\end{eqnarray}
which is a Sylvester equation \cite{DDS}. By assumption \ref{a4}, the spectrum of $A$ and spectrum of $B$ are contained in the open left half plane and the open right half plane, respectively, therefore, by \cite{DDS} the Sylvester equation (\ref{matrixeq2}) has a unique solution and the solution is
\begin{eqnarray}
    \epsilon \bm{v}_{t}^{\epsilon}(\widetilde{\bm{v}_{t}^{\epsilon}})^{*} {d}t &=& -\int_{0}^{\infty} e^{Ay}Ce^{-By} {d}y,
    \nonumber\\
    &=& -\int_{0}^{\infty} e^{-\bm{\gamma}( \bm{x}_t^{\epsilon},\mathscr{L}_{\bm{x}_t^{\epsilon}})y}\Big( {d}[\epsilon \bm{v}_{t}^{\epsilon}(\epsilon \widetilde{\bm{v}_{t}^{\epsilon}})^{*}] - \bm{\sigma}(\bm{x}_t^{\epsilon})\bm{\sigma}^{*}(\widetilde{\bm{x}_t^{\epsilon}}) {d}t- {d}\widehat{U_{t}^{\epsilon}}- {d}\widehat{\widehat{U_{t}^{\epsilon}}}\Big)e^{-\bm{\gamma}^{*}( \widetilde{\bm{x}_t^{\epsilon}},\mathscr{L}_{\bm{x}_t^{\epsilon}})y} {d}y
    \nonumber\\
    &=& -\int_{0}^{\infty} e^{-\bm{\gamma}( \bm{x}_t^{\epsilon},\mathscr{L}_{\bm{x}_t^{\epsilon}})y} {d}[\epsilon \bm{v}_{t}^{\epsilon}(\epsilon \widetilde{\bm{v}_{t}^{\epsilon}})^{*}]e^{-\bm{\gamma}^{*}( \widetilde{\bm{x}_t^{\epsilon}},\mathscr{L}_{\bm{x}_t^{\epsilon}})y} {d}y
    \nonumber\\
    &&+  \int_{0}^{\infty} e^{-\bm{\gamma}( \bm{x}_t^{\epsilon},\mathscr{L}_{\bm{x}_t^{\epsilon}})y}(\bm{\sigma}(\bm{x}_t^{\epsilon})\bm{\sigma}^{*}(\widetilde{\bm{x}_t^{\epsilon}}) {d}t)e^{-\bm{\gamma}^{*}( \widetilde{\bm{x}_t^{\epsilon}},\mathscr{L}_{\bm{x}_t^{\epsilon}})y} {d}y
    \nonumber\\
    &&+ \int_{0}^{\infty} e^{-\bm{\gamma}( \bm{x}_t^{\epsilon},\mathscr{L}_{\bm{x}_t^{\epsilon}})y}\Big( {d}\widehat{U_{t}^{\epsilon}}+ {d}\widehat{\widehat{U_{t}^{\epsilon}}}\Big)e^{-\bm{\gamma}^{*}( \widetilde{\bm{x}_t^{\epsilon}},\mathscr{L}_{\bm{x}_t^{\epsilon}})y} {d}y
    \nonumber\\
    &\triangleq&  {d}\widetilde{C_{t}^{1}}+ {d}\widetilde{C_{t}^{2}}+ {d}\widetilde{C_{t}^{3}}.
\end{eqnarray}
For the first term 
\begin{eqnarray}
     {d}(\widetilde{C_{t}^{1}})_{ij} = -\int_{0}^{\infty} (e^{-\bm{\gamma}( \bm{x}_t^{\epsilon},\mathscr{L}_{\bm{x}_t^{\epsilon}})y})_{ik_{1}}(e^{-\bm{\gamma}^{*}( \widetilde{\bm{x}_t^{\epsilon}},\mathscr{L}_{\bm{x}_t^{\epsilon}})y})_{k_{2}j} {d}y {d}[(\epsilon \bm{v}_{t}^{\epsilon})_{k_{1}}(\epsilon \widetilde{\bm{v}_{t}^{\epsilon}})^{*}_{k_{2}}],
\end{eqnarray}
where the integral exists and is finite for all $t \in [0, T]$.
For the second term $ {d}\widetilde{C_{t}^{2}}$, $ {d}\widetilde{C_{t}^{2}} = \widetilde{J}(\bm{x}_t^{\epsilon},\widetilde{\bm{x}_t^{\epsilon}},\mathscr{L}_{\bm{x}_t^{\epsilon}}) {d}t$, where $\widetilde{J}:\mathbb{R}^{d}\times\mathbb{R}^{d}\times\mathscr{P}_{2}\rightarrow\mathbb{R}^{d\times d}$ is the solution to the following Sylvester equation
\begin{eqnarray}
    -\bm{\gamma}( \bm{x}_t^{\epsilon},\mathscr{L}_{\bm{x}_t^{\epsilon}})\widetilde{J}(\bm{x}_t^{\epsilon},\widetilde{\bm{x}_t^{\epsilon}},\mathscr{L}_{\bm{x}_t^{\epsilon}})-\widetilde{J}(\bm{x}_t^{\epsilon},\widetilde{\bm{x}_t^{\epsilon}},\mathscr{L}_{\bm{x}_t^{\epsilon}})\bm{\gamma}^{*}( \widetilde{\bm{x}_t^{\epsilon}},\mathscr{L}_{\bm{x}_t^{\epsilon}}) = -\bm{\sigma}(\bm{x}_t^{\epsilon})\bm{\sigma}^{*}(\widetilde{\bm{x}_t^{\epsilon}}).
\end{eqnarray}
For the third term $ {d}\widetilde{C_{t}^{3}}$, using the equation (\ref{u1eq}) and (\ref{u2eq}) for $\widehat{U_{t}^{\epsilon}}$, $\widehat{\widehat{U_{t}^{\epsilon}}}$, the entries of $ {d}\widetilde{C_{t}^{3}}$ can be written as
\begin{eqnarray}
     {d}(\widetilde{C_{t}^{3}})_{ij} &=& -\int_{0}^{\infty} (e^{-\bm{\gamma}( \bm{x}_t^{\epsilon},\mathscr{L}_{\bm{x}_t^{\epsilon}})y})_{ik_{1}} (e^{-\bm{\gamma}^{*}( \widetilde{\bm{x}_t^{\epsilon}},\mathscr{L}_{\bm{x}_t^{\epsilon}})y})_{k_{2}j} {d}y
    \nonumber\\
    &&~~~~~~\times\Big((\epsilon \bm{v}_{t}^{\epsilon}F^{*}(\widetilde{\bm{x}_t^{\epsilon}}))_{k_{1}k_{2}} {d}t+(\epsilon \bm{v}_{t}^{\epsilon}(\bm{\sigma}(\widetilde{\bm{x}_t^{\epsilon}}) {d}\widetilde{\bm{W}_{t}})^{*})_{k_{1}k_{2}}
    \nonumber\\
    &&~~~~~~~~~~~~+ (F(\bm{x}_t^{\epsilon})(\epsilon \widetilde{\bm{v}_{t}^{\epsilon}})^{*})_{k_{1}k_{2}} {d}t+(\bm{\sigma}(\bm{x}_t^{\epsilon}) {d}\bm{W}_{t}(\epsilon \widetilde{\bm{v}_{t}^{\epsilon}})^{*})_{k_{1}k_{2}}\Big).
\end{eqnarray}
We substitute the expression for  $\epsilon \bm{v}_{t}^{\epsilon}(\widetilde{\bm{v}_{t}^{\epsilon}})^{*}$ back into equation (\ref{i2eq}) and obtain
\begin{eqnarray}\label{i2laeq}
    I_{2} &=& \widetilde{\mathbb {E}}\Big[\int_{0}^{t} (\partial_{\mu}\bm{\gamma}_{ij}^{-1}( \bm{x}_{s}^{\epsilon},\mathscr{L}_{\bm{x}_{s}^{\epsilon}})(\widetilde{\bm{x}_{s}^{\epsilon}}))_{l}\times
    \nonumber\\
    &&~~~~~~~~\Big(-\int_{0}^{\infty} (e^{-\bm{\gamma}( \bm{x}_t^{\epsilon},\mathscr{L}_{\bm{x}_t^{\epsilon}})y})_{jk_{1}}(e^{-\bm{\gamma}^{*}( \widetilde{\bm{x}_t^{\epsilon}},\mathscr{L}_{\bm{x}_t^{\epsilon}})y})_{k_{2}l} {d}y {d}[(\epsilon \bm{v}_{t}^{\epsilon})_{k_{1}}(\epsilon \widetilde{\bm{v}_{t}^{\epsilon}})^{*}_{k_{2}}]
    \nonumber\\
    &&~~~~~~~~~~+\widetilde{J}_{jl}(\bm{x}_t^{\epsilon},\widetilde{\bm{x}_t^{\epsilon}},\mathscr{L}_{\bm{x}_t^{\epsilon}}) {d}t
    \nonumber\\
    &&~~~~~~~~~~-\int_{0}^{\infty} (e^{-\bm{\gamma}( \bm{x}_t^{\epsilon},\mathscr{L}_{\bm{x}_t^{\epsilon}})y})_{jk_{1}} (e^{-\bm{\gamma}^{*}( \widetilde{\bm{x}_t^{\epsilon}},\mathscr{L}_{\bm{x}_t^{\epsilon}})y})_{k_{2}l} {d}y
    \nonumber\\
    &&~~~~~~~~~~~~~~~~\times[(\epsilon \bm{v}_{t}^{\epsilon}F^{*}(\widetilde{\bm{x}_t^{\epsilon}}))_{k_{1}k_{2}} {d}t+(\epsilon \bm{v}_{t}^{\epsilon}(\bm{\sigma}(\widetilde{\bm{x}_t^{\epsilon}}) {d}\widetilde{\bm{W}_{t}})^{*})_{k_{1}k_{2}}
    \nonumber\\
    &&~~~~~~~~~~~~~~~~~~~~~+ (F(\bm{x}_t^{\epsilon})(\epsilon \widetilde{\bm{v}_{t}^{\epsilon}})^{*})_{k_{1}k_{2}} {d}t+(\bm{\sigma}(\bm{x}_t^{\epsilon}) {d}\bm{W}_{t}(\epsilon \widetilde{\bm{v}_{t}^{\epsilon}})^{*})_{k_{1}k_{2}}]\Big)\Big].
\end{eqnarray}
Combining (\ref{xinteq}), (\ref{intpareq}), (\ref{itomeaeq}), (\ref{i1laeq}) and (\ref{i2laeq}), we have
\begin{eqnarray}
    \bm{x}_t^{\epsilon} = \bm{x}_t^{\epsilon,1}+\bm{x}_t^{\epsilon,2},
\end{eqnarray}
where $\bm{x}_t^{\epsilon,1}$ and $\bm{x}_t^{\epsilon,2}$ is in (\ref{xe1}) and (\ref{xe2}).

Secondly, we show that
$$    \mathbb{E}(\sup_{0\leq t \leq T}\mid \bm{x}_t^{\epsilon}-\bm{x}_{t}\mid^2) \leq C\sqrt{\epsilon}.$$
By equation~(\ref{maineq}) and (\ref{maineq-limit}),
\begin{eqnarray}
    (\bm{x}_t^{\epsilon})_{i}-(\bm{x}_{t})_{i} &=&  \int_{0}^{t}[\bm{\gamma}_{ij}^{-1}( \bm{x}_{s}^{\epsilon},\mathscr{L}_{\bm{x}_{s}^{\epsilon}})F_{j}(\bm{x}_{s}^{\epsilon})-\bm{\gamma}_{ij}^{-1}( \bm{x}_{s},\mathscr{L}_{\bm{x}_{s}})F_{j}(\bm{x}_{s})] {d}s
    \\
    &&+ \int_{0}^{t}[\bm{\gamma}_{ij}^{-1}( \bm{x}_{s}^{\epsilon},\mathscr{L}_{\bm{x}_{s}^{\epsilon}})\bm{\sigma}_{jk}(\bm{x}_{s}^{\epsilon})-\bm{\gamma}_{ij}^{-1}( \bm{x}_{s},\mathscr{L}_{\bm{x}_{s}})\bm{\sigma}_{jk}(\bm{x}_{s})] {d}(\bm{W}_{s})_{k}
    \nonumber\\
    &&+ \int_{0}^{t} \Big([\partial_{x_{l}}\bm{\gamma}_{ij}^{-1}( \bm{x}_{s}^{\epsilon},\mathscr{L}_{\bm{x}_{s}^{\epsilon}})]J_{jl}(\bm{x}_{s}^{\epsilon},\mathscr{L}_{\bm{x}_{s}^{\epsilon}})-[\partial_{x_{l}}\bm{\gamma}_{ij}^{-1}( \bm{x}_{s},\mathscr{L}_{\bm{x}_{s}})]J_{jl}(\bm{x}_{s},\mathscr{L}_{\bm{x}_{s}})\Big) {d}s
    \nonumber\\
    &&+\int_{0}^{t}\Big(\widetilde{\mathbb {E}}\Big[ (\partial_{\mu}\bm{\gamma}_{ij}^{-1}( \bm{x}_{s}^{\epsilon},\mathscr{L}_{\bm{x}_{s}^{\epsilon}})(\widetilde{\bm{x}_{s}^{\epsilon}}))_{l}\widetilde{J}_{jl}(\bm{x}_{s}^{\epsilon},
    \widetilde{\bm{x}_{s}^{\epsilon}},\mathscr{L}_{\bm{x}_{s}^{\epsilon}})\Big]
    \nonumber\\
        &&~~~~~~-\widetilde{\mathbb {E}}\Big[ (\partial_{\mu}\bm{\gamma}_{ij}^{-1}( \bm{x}_{s},\mathscr{L}_{\bm{x}_{s}})(\widetilde{\bm{x}_{s}}))_{l}\widetilde{J}_{jl}(\bm{x}_{s},\widetilde{\bm{x}_{s}},\mathscr{L}_{\bm{x}_{s}})\Big]\Big) {d}s
    \nonumber
 \end{eqnarray}
 \begin{eqnarray}
    &&+(-\bm{\gamma}_{ij}^{-1}( \bm{x}_t^{\epsilon},\mathscr{L}_{\bm{x}_t^{\epsilon}})\epsilon (\bm{v}_{t}^{\epsilon})_{j})+\bm{\gamma}_{ij}^{-1}( \bm{x}_{0},\mathscr{L}_{\bm{x}_{0}})\epsilon (\bm{v}_{0})_{j}
    \nonumber\\
    &&+(U_{t}^{\epsilon})_{i}+\widetilde{\mathbb{E}}\Big[\int_{0}^{t}(\partial_{\mu}\bm{\gamma}_{ij}^{-1}( \bm{x}_{s}^{\epsilon},\mathscr{L}_{\bm{x}_{s}^{\epsilon}})(\widetilde{\bm{x}_{s}^{\epsilon}}))_{l}\times
    \nonumber\\
    &&~~~~~~~~\Big(-\int_{0}^{\infty} (e^{-\bm{\gamma}( \bm{x}_t^{\epsilon},\mathscr{L}_{\bm{x}_t^{\epsilon}})y})_{jk_{1}} (e^{-\bm{\gamma}^{*}( \widetilde{\bm{x}_t^{\epsilon}},\mathscr{L}_{\bm{x}_t^{\epsilon}})y})_{k_{2}l} {d}y
    \nonumber\\
    &&~~~~~~~~~~~~~~~~\times[(\epsilon \bm{v}_{t}^{\epsilon}F^{*}(\widetilde{\bm{x}_t^{\epsilon}}))_{k_{1}k_{2}} {d}t+(\epsilon \bm{v}_{t}^{\epsilon}(\bm{\sigma}(\widetilde{\bm{x}_t^{\epsilon}}) {d}\widetilde{\bm{W}_{t}})^{*})_{k_{1}k_{2}}
    \nonumber\\
    &&~~~~~~~~~~~~~~~~~~~~~+ (F(\bm{x}_t^{\epsilon})(\epsilon \widetilde{\bm{v}_{t}^{\epsilon}})^{*})_{k_{1}k_{2}} {d}t+(\bm{\sigma}(\bm{x}_t^{\epsilon})\mathrm{d}\bm{W}_{t}(\epsilon \widetilde{\bm{v}_{t}^{\epsilon}})^{*})_{k_{1}k_{2}}]\Big)\Big]
    \nonumber\\
    &&+\int_{0}^{t} [\partial_{x_{l}}\bm{\gamma}_{ij}^{-1}( \bm{x}_{s}^{\epsilon},\mathscr{L}_{\bm{x}_{s}^{\epsilon}})]\times
    \nonumber\\
    &&~~~~~~~~\Big[-\int_{0}^{\infty}(e^{-\bm{\gamma}( \bm{x}_{s}^{\epsilon},\mathscr{L}_{\bm{x}_{s}^{\epsilon}})y})_{jk_{1}}(e^{-\bm{\gamma}^{*}( \bm{x}_{s}^{\epsilon},\mathscr{L}_{\bm{x}_{s}^{\epsilon}})y})_{k_{2}l} {d}y\Big] {d}[(\epsilon \bm{v}_{s}^{\epsilon})_{k_{1}}(\epsilon \bm{v}_{s}^{\epsilon})_{k_{2}}]
    \nonumber\\
    &&~~~~+\widetilde{\mathbb {E}}\Big[\int_{0}^{t}(\partial_{\mu}\bm{\gamma}_{ij}^{-1}( \bm{x}_{s}^{\epsilon},\mathscr{L}_{\bm{x}_{s}^{\epsilon}})(\widetilde{\bm{x}_{s}^{\epsilon}}))_{l}\times
    \nonumber\\
    &&~~~~~~~~\Big(-\int_{0}^{\infty} (e^{-\bm{\gamma}( \bm{x}_t^{\epsilon},\mathscr{L}_{\bm{x}_t^{\epsilon}})y})_{jk_{1}}(e^{-\bm{\gamma}^{*}( \widetilde{\bm{x}_t^{\epsilon}},\mathscr{L}_{\bm{x}_t^{\epsilon}})y})_{k_{2}l} {d}y {d}[(\epsilon \bm{v}_{t}^{\epsilon})_{k_{1}}(\epsilon \widetilde{\bm{v}_{t}^{\epsilon}})^{*}_{k_{2}}]\Big)\Big]
    \nonumber\\
    &:=& \sum_{i=1}^{7} y_{i}.
    \nonumber
\end{eqnarray}
By Cauchy-schwarz's inequality, then
\begin{eqnarray}
   \mathbb {E}\mid (\bm{x}_t^{\epsilon})_{i}-(\bm{x}_{t})_{i}\mid^{2}\leq C\sum_{i=1}^{7}\mathbb {E}\mid y_{i}\mid^{2} := \sum_{i=1}^{7} I_{i}(t).
\end{eqnarray}
For $I_{1}$, since by assumption \ref{a4},  $\parallel \bm{\gamma}(\bm{x},\mu)\parallel\geq C_{\lambda_{\bm{\gamma}}}$, then
\begin{eqnarray}\label{11}
    \parallel \bm{\gamma}^{-1}(\bm{x},\mu)\parallel\leq \frac{1}{C_{\lambda_{\bm{\gamma}}}},
\end{eqnarray}
by assumption \ref{a3}, we have
\begin{eqnarray}\label{12}
\parallel \bm{\gamma}(\bm{x}_{1},\mu_{1})-\bm{\gamma}(\bm{x}_{2},\mu_{2})\parallel\leq C_{T}\Big[\mid \bm{x}_{1}-\bm{x}_{2}\mid+\mathbb{W}(\mu_{1},\mu_{2})\Big],
\end{eqnarray}
hence, combining (\ref{11}) and (\ref{12}) with H\"{o}lder inequality, Cauchy-schwarz's inequality and assumptions \ref{a1}-\ref{a4}, we have for any $t\in[0,T]$
\begin{eqnarray}
    I_{1}&=& \mathbb {E}\Big| \int_{0}^{t}[\bm{\gamma}_{ij}^{-1}( \bm{x}_{s}^{\epsilon},\mathscr{L}_{\bm{x}_{s}^{\epsilon}})F_{j}(\bm{x}_{s}^{\epsilon})-\bm{\gamma}_{ij}^{-1}( \bm{x}_{s},\mathscr{L}_{\bm{x}_{s}})F_{j}(\bm{x}_{s})] {d}s\Big|^{2}
    \nonumber\\
    &\leq& T\mathbb {E}\int_{0}^{t}\Big|[\bm{\gamma}_{ij}^{-1}( \bm{x}_{s}^{\epsilon},\mathscr{L}_{\bm{x}_{s}^{\epsilon}})F_{j}(\bm{x}_{s}^{\epsilon})-\bm{\gamma}_{ij}^{-1}( \bm{x}_{s},\mathscr{L}_{\bm{x}_{s}})F_{j}(\bm{x}_{s})]\Big|^{2} {d}s
    \nonumber\\
         &\leq& T\mathbb {E}\int_{0}^{t}\Big|[\bm{\gamma}_{ij}^{-1}( \bm{x}_{s}^{\epsilon},\mathscr{L}_{\bm{x}_{s}^{\epsilon}})-\bm{\gamma}_{ij}^{-1}( \bm{x}_{s},\mathscr{L}_{\bm{x}_{s}})]F_{j}(\bm{x}_{s}^{\epsilon})\Big|^{2} {d}s
    \nonumber\\
   &&+  T\mathbb {E}\int_{0}^{t}\Big|\bm{\gamma}_{ij}^{-1}( \bm{x}_{s},\mathscr{L}_{\bm{x}_{s}})[F_{j}(\bm{x}_{s}^{\epsilon})-F_{j}(\bm{x}_{s})]\Big|^{2} {d}s
    \nonumber\\
   &\leq&  \frac{C_{T}}{C_{\lambda_{\bm{\gamma}}}}\mathbb {E}\int_{0}^{t}\Big(|\bm{x}_{s}^{\epsilon}-\bm{x}_{s}|^{2}+W^{2}(\mathscr{L}_{\bm{x}_{s}}^{\epsilon},\mathscr{L}_{\bm{x}_{s}})\Big) {d}s+\frac{T}{C_{\lambda_{\bm{\gamma}}}}\mathbb {E}\int_{0}^{t}|\bm{x}_{s}^{\epsilon}-\bm{x}_{s}|^{2} {d}s
    \nonumber\\
    &\leq& \frac{C_{T}}{C_{\lambda_{\bm{\gamma}}}} \int_{0}^{t}\mathbb {E}|\bm{x}_{s}^{\epsilon}-\bm{x}_{s}|^{2} {d}s.
\end{eqnarray}
For $I_{2}$, by Burkh\"{o}lder inequality and assumptions \ref{a1} and \ref{a2}, we have for any $t\in[0,T]$
\begin{eqnarray}
    I_{2}&=& \mathbb {E}\Big| \int_{0}^{t}[\bm{\gamma}_{ij}^{-1}( \bm{x}_{s}^{\epsilon},\mathscr{L}_{\bm{x}_{s}^{\epsilon}})\bm{\sigma}_{jk}(\bm{x}_{s}^{\epsilon})-\bm{\gamma}_{ij}^{-1}( \bm{x}_{s},\mathscr{L}_{\bm{x}_{s}})\bm{\sigma}_{jk}(\bm{x}_{s})] {d}(\bm{W}_{s})_{k}\Big|^{2}
    \nonumber\\
    &\leq& T\mathbb {E}\int_{0}^{t}\Big|[\bm{\gamma}_{ij}^{-1}( \bm{x}_{s}^{\epsilon},\mathscr{L}_{\bm{x}_{s}^{\epsilon}})\bm{\sigma}_{jk}(\bm{x}_{s}^{\epsilon})-\bm{\gamma}_{ij}^{-1}( \bm{x}_{s},\mathscr{L}_{\bm{x}_{s}})\bm{\sigma}_{jk}(\bm{x}_{s})]\Big|^{2} {d}s
    \nonumber\\
     &\leq& T\mathbb {E}\int_{0}^{t}\Big|[\bm{\gamma}_{ij}^{-1}( \bm{x}_{s}^{\epsilon},\mathscr{L}_{\bm{x}_{s}^{\epsilon}})-\bm{\gamma}_{ij}^{-1}( \bm{x}_{s},\mathscr{L}_{\bm{x}_{s}})]\bm{\sigma}_{jk}(\bm{x}_{s}^{\epsilon})\Big|^{2} {d}s
    \nonumber\\
     &&+ T\mathbb {E}\int_{0}^{t}\Big|\bm{\gamma}_{ij}^{-1}( \bm{x}_{s},\mathscr{L}_{\bm{x}_{s}})[\bm{\sigma}_{jk}(\bm{x}_{s}^{\epsilon})-\bm{\sigma}_{jk}(\bm{x}_{s})]\Big|^{2} {d}s
    \nonumber\\
    &\leq& \frac{C_{T}}{C_{\lambda_{\bm{\gamma}}}} \mathbb {E}\int_{0}^{t}\Big(|\bm{x}_{s}^{\epsilon}-\bm{x}_{s}|^{2}+W^{2}(\mathscr{L}_{\bm{x}_{s}}^{\epsilon},\mathscr{L}_{\bm{x}_{s}})\Big) {d}s
    \nonumber\\
    &\leq& \frac{C_{T}}{C_{\lambda_{\bm{\gamma}}}} \int_{0}^{t}\mathbb {E}|\bm{x}_{s}^{\epsilon}-\bm{x}_{s}|^{2} {d}s.
\end{eqnarray}
For $I_{3}$, note that
$$\partial_{x}\bm{\gamma}_{ij}^{-1}( \bm{x}_{s}^{\epsilon},\mathscr{L}_{\bm{x}_{s}^{\epsilon}}) = - \bm{\gamma}_{ij}^{-1}( \bm{x}_{s}^{\epsilon},\mathscr{L}_{\bm{x}_{s}^{\epsilon}})\Big(\partial_{x}\bm{\gamma}_{ij}( \bm{x}_{s}^{\epsilon},\mathscr{L}_{\bm{x}_{s}^{\epsilon}})\Big)\bm{\gamma}_{ij}^{-1}( \bm{x}_{s}^{\epsilon},\mathscr{L}_{\bm{x}_{s}^{\epsilon}}),$$
and by assumption \ref{a3} and \ref{a4}, we have 
\begin{eqnarray}\label{gaminvcon}
   \mid[\partial_{x_{l}}\bm{\gamma}_{ij}^{-1}( \bm{x}_{s}^{\epsilon},\mathscr{L}_{\bm{x}_{s}^{\epsilon}})]-[\partial_{x_{l}}\bm{\gamma}_{ij}^{-1}( \bm{x}_{s},\mathscr{L}_{\bm{x}_{s}})]\mid \leq C_{T}(\mid \bm{x}_{s}^{\epsilon}-\bm{x}_{s}\mid+W(\mathscr{L}_{\bm{x}_{s}^{\epsilon}},\mathscr{L}_{\bm{x}_{s}})).
\end{eqnarray}
For $J(\bm{x}_t^{\epsilon},\mathscr{L}_{\bm{x}_t^{\epsilon}})$, by assumptions \ref{a2}, \ref{a4}  and Lemma 4.4.2 in \cite{HP:ordieq}, we have
\begin{eqnarray}
    \parallel J(\bm{x}_t^{\epsilon},\mathscr{L}_{\bm{x}_t^{\epsilon}})\parallel^{2} &=& \parallel\int_{0}^{\infty} e^{-\bm{\gamma}( \bm{x}_t^{\epsilon},\mathscr{L}_{\bm{x}_t^{\epsilon}})y}(\bm{\sigma}(\bm{x}_t^{\epsilon})\bm{\sigma}^{*}(\bm{x}_t^{\epsilon}))e^{-\bm{\gamma}^{*}( \bm{x}_t^{\epsilon},\mathscr{L}_{\bm{x}_t^{\epsilon}})y} {d}y\parallel^{2}
    \nonumber\\
    &\leq& \int_{0}^{\infty} \parallel e^{-\bm{\gamma}( \bm{x}_t^{\epsilon},\mathscr{L}_{\bm{x}_t^{\epsilon}})y}\parallel^{2}\parallel(\bm{\sigma}(\bm{x}_t^{\epsilon})\bm{\sigma}^{*}(\bm{x}_t^{\epsilon}))\parallel^{2}\parallel e^{-\bm{\gamma}^{*}( \bm{x}_t^{\epsilon},\mathscr{L}_{\bm{x}_t^{\epsilon}})y}\parallel^{2} {d}y
    \nonumber\\
    &\leq& 2C_{T}\int_{0}^{\infty} e^{-C_{\lambda_{\bm{\gamma}}}y}\mathrm{d}y,
    \nonumber\\
    &\leq& \frac{C_{T}}{C_{\lambda_{\bm{\gamma}}}},
\end{eqnarray}
similarly,
\begin{eqnarray}\label{J-1}
    \parallel \widetilde{J}(\bm{x}_t^{\epsilon},\widetilde{x}_{t}^{\epsilon},\mathscr{L}_{\bm{x}_t^{\epsilon}})\parallel^{2}\leq \frac{C_{T}}{C_{\lambda_{\bm{\gamma}}}}.
\end{eqnarray}

Therefore, by H\"{o}lder inequality, Cauchy-schwarz's inequality, boundness of $\partial_{x}\bm{\gamma}^{-1}( \bm{x}_{s}^{\epsilon},\mathscr{L}_{\bm{x}_{s}^{\epsilon}})$, $J(\bm{x}_t^{\epsilon},\mathscr{L}_{\bm{x}_t^{\epsilon}})$, and (\ref{gaminvcon}), we have
\begin{eqnarray}
    I_{3} &=& \mathbb {E}\Big|\int_{0}^{t} \Big([\partial_{x_{l}}\bm{\gamma}_{ij}^{-1}( \bm{x}_{s}^{\epsilon},\mathscr{L}_{\bm{x}_{s}^{\epsilon}})]J_{jl}(\bm{x}_{s}^{\epsilon},\mathscr{L}_{\bm{x}_{s}^{\epsilon}})-[\partial_{x_{l}}\bm{\gamma}_{ij}^{-1}( \bm{x}_{s},\mathscr{L}_{\bm{x}_{s}})]J_{jl}(\bm{x}_{s},\mathscr{L}_{\bm{x}_{s}})\Big) {d}s\Big|^{2}
    \nonumber\\
    &\leq& T\mathbb {E}\int_{0}^{t} \Big|\Big([\partial_{x_{l}}\bm{\gamma}_{ij}^{-1}( \bm{x}_{s}^{\epsilon},\mathscr{L}_{\bm{x}_{s}^{\epsilon}})]-[\partial_{x_{l}}\bm{\gamma}_{ij}^{-1}( \bm{x}_{s},\mathscr{L}_{\bm{x}_{s}})]\Big)J_{jl}(\bm{x}_{s}^{\epsilon},\mathscr{L}_{\bm{x}_{s}^{\epsilon}})\Big|^{2} {d}s
    \nonumber\\
    &&+  {\mathbb {E}\int_{0}^{t} \Big|[\partial_{x_{l}}\bm{\gamma}_{ij}^{-1}( \bm{x}_{s},\mathscr{L}_{\bm{x}_{s}})]\Big(J_{jl}(\bm{x}_{s}^{\epsilon},\mathscr{L}_{\bm{x}_{s}^{\epsilon}})-J_{jl}(\bm{x}_{s},\mathscr{L}_{\bm{x}_{s}})\Big)\Big|^{2} {d}s}
    \nonumber\\
    &\leq& \frac{C_{T}}{C_{\lambda_{\bm{\gamma}}}} \mathbb {E}\int_{0}^{t}\Big(\mid \bm{x}_{s}^{\epsilon}-\bm{x}_{s}\mid^{2}+W^{2}(\mathscr{L}_{\bm{x}_{s}^{\epsilon}},\mathscr{L}_{\bm{x}_{s}})\Big) {d}s
    \nonumber\\
    &&+
     {C_T}{\mathbb {E}\int_{0}^{t}\Big|
     J_{jl}(\bm{x}_{s}^{\epsilon},\mathscr{L}_{\bm{x}_{s}^{\epsilon}})-J_{jl}(\bm{x}_{s},\mathscr{L}_{\bm{x}_{s}})\Big|^{2} {d}s}.
\end{eqnarray}
Using equation~(\ref{thm-new-1}),  then
\begin{eqnarray}
&&\Big| J(\bm{x}_t^{\epsilon},\mathscr{L}_{\bm{x}_t^{\epsilon}})- J(\bm{x}_t ,\mathscr{L}_{\bm{x}_t})\Big|^2  \nonumber\\
&=& \Big| \int_{0}^{\infty} e^{-\bm{\gamma}( \bm{x}_t^{\epsilon},\mathscr{L}_{\bm{x}_t^{\epsilon}})y}(\bm{\sigma}(\bm{x}_t^{\epsilon})\bm{\sigma}^{*}(\bm{x}_t^{\epsilon}))e^{-\bm{\gamma}^{*}( \bm{x}_t^{\epsilon},\mathscr{L}_{\bm{x}_t^{\epsilon}})y}-  e^{-\bm{\gamma}( \bm{x}_t ,\mathscr{L}_{\bm{x}_t })y}(\bm{\sigma}(\bm{x}_t )\bm{\sigma}^{*}(\bm{x}_t ))e^{-\bm{\gamma}^{*}( \bm{x}_t ,\mathscr{L}_{\bm{x}_t })y} {d}y \Big|^2 \nonumber\\
&\leq& 2\Big| \int_{0}^{\infty} e^{-\bm{\gamma}( \bm{x}_t^{\epsilon},\mathscr{L}_{\bm{x}_t^{\epsilon}})y}(\bm{\sigma}(\bm{x}_t^{\epsilon})\bm{\sigma}^{*}(\bm{x}_t^{\epsilon})-\bm{\sigma}( \bm{x}_t )\bm{\sigma}^{*}(\bm{x}_t )   )e^{-\bm{\gamma}^{*}( \bm{x}_t^{\epsilon},\mathscr{L}_{\bm{x}_t^{\epsilon}})y} dy \Big|^2 \nonumber\\
&&+  2 \Big| \int_{0}^{\infty} e^{-\bm{\gamma}( \bm{x}_t^{\epsilon},\mathscr{L}_{\bm{x}_t^{\epsilon}})y}(\bm{\sigma}(\bm{x}_t )\bm{\sigma}^{*}(\bm{x}_t ))e^{-\bm{\gamma}^{*}( \bm{x}_t^{\epsilon},\mathscr{L}_{\bm{x}_t^{\epsilon}})y}-  e^{-\bm{\gamma}( \bm{x}_t ,\mathscr{L}_{\bm{x}_t })y}(\bm{\sigma}(\bm{x}_t )\bm{\sigma}^{*}(\bm{x}_t ))e^{-\bm{\gamma}^{*}( \bm{x}_t ,\mathscr{L}_{\bm{x}_t })y} {d}y \Big|^2\nonumber\\
&\leq& \frac{2C_{T}}{C_{\lambda_{\bm{\gamma}}}} \Big|\bm{\sigma}(\bm{x}_t^{\epsilon})\bm{\sigma}^{*}(\bm{x}_t^{\epsilon})-\bm{\sigma}( \bm{x}_t )\bm{\sigma}^{*}(\bm{x}_t ) \Big|^2  \nonumber\\
&&+  2 C_T \Big| \int_{0}^{\infty} e^{-\bm{\gamma}( \bm{x}_t^{\epsilon},\mathscr{L}_{\bm{x}_t^{\epsilon}})y}e^{-\bm{\gamma}^{*}( \bm{x}_t^{\epsilon},\mathscr{L}_{\bm{x}_t^{\epsilon}})y}-  e^{-\bm{\gamma}( \bm{x}_t ,\mathscr{L}_{\bm{x}_t })y} e^{-\bm{\gamma}^{*}( \bm{x}_t ,\mathscr{L}_{\bm{x}_t })y} {d}y \Big|^2,
\end{eqnarray}
by assumption \ref{a2} and Lemma \ref{lem1}, we have
\begin{eqnarray}
&&\mathbb {E}\int_{0}^{t} \Big\|\bm{\sigma}(\bm{x}_s^{\epsilon})\bm{\sigma}^{*}(\bm{x}_s^{\epsilon})-\bm{\sigma}( \bm{x}_s )\bm{\sigma}^{*}(\bm{x}_s ) \Big\|^2 d s \nonumber\\
&\leq& 2 \mathbb {E}\int_{0}^{t} \Big\|(\bm{\sigma}(\bm{x}_s^{\epsilon})-\bm{\sigma}( \bm{x}_s ))\bm{\sigma}^{*}(\bm{x}_s^{\epsilon}) \Big\|^2 d s
+ 2 \mathbb {E}\int_{0}^{t} \Big\|\bm{\sigma}(\bm{x}_s^{\epsilon})(\bm{\sigma}^{*}(\bm{x}_s^{\epsilon})- \bm{\sigma}^{*}(\bm{x}_s )) \Big\|^2  d s\nonumber\\
&\leq&  C_T\mathbb {E}\int_{0}^{t} \Big|\bm{x}_s^{\epsilon}-\bm{x}_s  \Big|^2d s,
\end{eqnarray}
and
\begin{eqnarray}
&&\Big| \int_{0}^{\infty} (e^{-\bm{\gamma}( \bm{x}_t^{\epsilon},\mathscr{L}_{\bm{x}_t^{\epsilon}})y})_{ik_1} (e^{-\bm{\gamma}^{*}( \bm{x}_t^{\epsilon},\mathscr{L}_{\bm{x}_t^{\epsilon}})y})_{k_2j}- ( e^{-\bm{\gamma}( \bm{x}_t ,\mathscr{L}_{\bm{x}_t })y})_{ik_1}( e^{-\bm{\gamma}^{*}( \bm{x}_t ,\mathscr{L}_{\bm{x}_t })y} )_{k_2j}  {d}y \Big|^2   \nonumber\\
&\leq &  2 \Big| \int_{0}^{\infty} \left((e^{-\bm{\gamma}( \bm{x}_t^{\epsilon},\mathscr{L}_{\bm{x}_t^{\epsilon}})y})_{ik_1}-(e^{-\bm{\gamma}( \bm{x}_t,\mathscr{L}_{\bm{x}_t })y})_{ik_1} \right) (e^{-\bm{\gamma}^{*}( \bm{x}_t^{\epsilon},\mathscr{L}_{\bm{x}_t^{\epsilon}})y})_{k_2j} {d}y \Big|^2   \nonumber\\
&&+  2 \Big| \int_{0}^{\infty}( e^{-\bm{\gamma}( \bm{x}_t ,\mathscr{L}_{\bm{x}_t})y})_{ik_1}  \left( (e^{-\bm{\gamma}^{*}( \bm{x}_t^{\epsilon},\mathscr{L}_{\bm{x}_t^{\epsilon}})y} )_{k_2j} - (e^{-\bm{\gamma}^{*}( \bm{x}_t ,\mathscr{L}_{\bm{x}_t })y})_{k_2j}\right) {d}y \Big|^2   \nonumber\\
&\leq &  2 \Big| \int_{0}^{\infty} \left((e^{-\bm{\gamma}( \bm{x}_t^{\epsilon},\mathscr{L}_{\bm{x}_t^{\epsilon}})y})_{ik_1}-(e^{-\bm{\gamma}( \bm{x}_t,\mathscr{L}_{\bm{x}_t })y})_{ik_1} \right) {d}y \Big|^2  \nonumber\\
&&+  2 \Big| \int_{0}^{\infty}  \left( (e^{-\bm{\gamma}^{*}( \bm{x}_t^{\epsilon},\mathscr{L}_{\bm{x}_t^{\epsilon}})y} )_{k_2j} - (e^{-\bm{\gamma}^{*}( \bm{x}_t ,\mathscr{L}_{\bm{x}_t })y})_{k_2j}\right)  {d}y \Big|^2   \nonumber\\
&\leq & 2C^{-4}_{\lambda_\gamma}|\bm{\gamma}_{ik_1}( \bm{x}_t,\mathscr{L}_{\bm{x}_t })- \bm{\gamma}_{ik_1} ( \bm{x}_t^{\epsilon},\mathscr{L}_{\bm{x}_t^{\epsilon}})|^2
+2 C^{-4}_{\lambda_\gamma}|\bm{\gamma}_{k_2j}^{*}( \bm{x}_t,\mathscr{L}_{\bm{x}_t })_{ik_1}- \bm{\gamma}^{*}_{k_2j} ( \bm{x}_t^{\epsilon},\mathscr{L}_{\bm{x}_t^{\epsilon}})|^2 \nonumber\\
&\leq & 2C_TC^{-4}_{\lambda_\gamma} \left[|\bm{x}_t^{\epsilon}-\bm{x}_t|^2  + W^{2}(\mathscr{L}_{\bm{x}_t^{\epsilon}},\mathscr{L}_{\bm{x}_t})\right] \nonumber\\
&\leq & 4 C_TC^{-4}_{\lambda_\gamma}|\bm{x}_t^{\epsilon}-\bm{x}_t|^2 .
\end{eqnarray}
Hence,
\begin{eqnarray}
I_{3} \leq C_T \mathbb {E}\int_{0}^{t}\Big(\mid \bm{x}_{s}^{\epsilon}-\bm{x}_{s}\mid^{2}+W^{2}(\mathscr{L}_{\bm{x}_{s}^{\epsilon}},\mathscr{L}_{\bm{x}_{s}})\Big) {d}s
 \leq C_T \mathbb {E}\int_{0}^{t} \mid \bm{x}_{s}^{\epsilon}-\bm{x}_{s}\mid^{2}d s .
 \end{eqnarray}
By H\"{o}lder inequality and equation(\ref{J-1}), then using the same estimation method as $I_{3}$,
\begin{eqnarray}
    I_{4} &=& \mathbb {E}\Big|\widetilde{\mathbb {E}}\int_{0}^{t} \Big(\Big[ (\partial_{\mu}\bm{\gamma}_{ij}^{-1}( \bm{x}_{s}^{\epsilon},\mathscr{L}_{\bm{x}_{s}^{\epsilon}})(\widetilde{\bm{x}_{s}^{\epsilon}}))_{l}\widetilde{J}_{jl}(\bm{x}_{s}^{\epsilon},\widetilde{\bm{x}_{s}^{\epsilon}},\mathscr{L}_{\bm{x}_{s}^{\epsilon}})\Big]
    \nonumber\\
    &&~~~~~~~~~~~~~~~
    -\Big[ (\partial_{\mu}\bm{\gamma}_{ij}^{-1}( \bm{x}_{s},\mathscr{L}_{\bm{x}_{s}})(\widetilde{\bm{x}_{s}}))_{l}\widetilde{J}_{jl}(\bm{x}_{s},\widetilde{\bm{x}_{s}},\mathscr{L}_{\bm{x}_{s}})\Big]\Big)
     {d}s\Big|^{2}
    \nonumber\\
    &\leq& T \mathbb {E}(\widetilde{\mathbb {E}}\int_{0}^{t} \Big|\Big[ (\partial_{\mu}\bm{\gamma}_{ij}^{-1}( \bm{x}_{s}^{\epsilon},\mathscr{L}_{\bm{x}_{s}^{\epsilon}})(\widetilde{\bm{x}_{s}^{\epsilon}}))_{l}\widetilde{J}_{jl}
    (\bm{x}_{s}^{\epsilon},\widetilde{\bm{x}_{s}^{\epsilon}},\mathscr{L}_{\bm{x}_{s}^{\epsilon}})\Big]
    \nonumber\\
    &&~~~~~~~~~~~~~~~
    -\Big[ (\partial_{\mu}\bm{\gamma}_{ij}^{-1}( \bm{x}_{s},\mathscr{L}_{\bm{x}_{s}})(\widetilde{\bm{x}_{s}}))_{l}\widetilde{J}_{jl}(\bm{x}_{s},\widetilde{\bm{x}_{s}},\mathscr{L}_{\bm{x}_{s}})\Big]\Big|^{2} {d}s)
    \nonumber\\
    &\leq&  T \mathbb {E}(\widetilde{\mathbb {E}}\int_{0}^{t} \Big|\Big[ (\partial_{\mu}\bm{\gamma}_{ij}^{-1}( \bm{x}_{s}^{\epsilon},\mathscr{L}_{\bm{x}_{s}^{\epsilon}})(\widetilde{\bm{x}_{s}^{\epsilon}}))_{l}(\widetilde{J}_{jl}
    (\bm{x}_{s}^{\epsilon},\widetilde{\bm{x}_{s}^{\epsilon}},\mathscr{L}_{\bm{x}_{s}^{\epsilon}})- \widetilde{J}_{jl}
    (\bm{x}_{s},\widetilde{\bm{x}_{s} },\mathscr{L}_{\bm{x}_{s} })  )\Big]  \nonumber\\
    &&~~~~~~~~~~~~~~~
    +\Big[ ( (\partial_{\mu}\bm{\gamma}_{ij}^{-1}( \bm{x}_{s}^{\epsilon},\mathscr{L}_{\bm{x}_{s}^{\epsilon}})(\widetilde{\bm{x}_{s}^{\epsilon}}))_{l} - (\partial_{\mu}\bm{\gamma}_{ij}^{-1}( \bm{x}_{s},\mathscr{L}_{\bm{x}_{s}})(\widetilde{\bm{x}_{s}}))_{l} )\widetilde{J}_{jl}(\bm{x}_{s},\widetilde{\bm{x}_{s}},\mathscr{L}_{\bm{x}_{s}})\Big]\Big|^{2} {d}s)
    \nonumber
    \end{eqnarray}
    \begin{eqnarray}
    &\leq& C^2T \mathbb {E}(\widetilde{\mathbb {E}}\int_{0}^{t} \Big|(\widetilde{J}_{jl}
    (\bm{x}_{s}^{\epsilon},\widetilde{\bm{x}_{s}^{\epsilon}},\mathscr{L}_{\bm{x}_{s}^{\epsilon}})- \widetilde{J}_{jl}
    (\bm{x}_{s},\widetilde{\bm{x}_{s} },\mathscr{L}_{\bm{x}_{s} }) )\Big|^2 d s ) \nonumber\\
    &&~~~~~~~~~~~~~~~
    + \frac{T C_{T}}{C_{\lambda_{\bm{\gamma}}}}\mathbb {E}\int_{0}^{t}\Big(\mid \bm{x}_{s}^{\epsilon}-\bm{x}_{s}\mid^{2}+W^{2}(\mathscr{L}_{\bm{x}_{s}^{\epsilon}},\mathscr{L}_{\bm{x}_{s}})\Big) {d}s \nonumber\\
    &\leq&  C_T\mathbb {E}\int_{0}^{t} \Big|\bm{x}_s^{\epsilon}-\bm{x}_s  \Big|^2d s.
\end{eqnarray}
Since $\mid\bm{\gamma}_{ij}^{-1}(x,\mu)\mid\leq \frac{1}{C_{\lambda_{\bm{\gamma}}}}$, by H\"{o}lder inequality and Lemma \ref{pconver},
\begin{eqnarray}
    I_{5} &=& \mathbb {E}\Big(\sup_{0\leq t\leq T}\mid-\bm{\gamma}_{ij}^{-1}( \bm{x}_t^{\epsilon},\mathscr{L}_{\bm{x}_t^{\epsilon}})\epsilon (\bm{v}_{t}^{\epsilon})_{j}+\bm{\gamma}_{ij}^{-1}( \bm{x}_{0},\mathscr{L}_{\bm{x}_{0}})\epsilon (\bm{v}_{0})_{j}\mid\Big)^{2}
    \nonumber\\
    &\leq & 2\mathbb {E}\Big(\sup_{0\leq t\leq T}\mid-\bm{\gamma}_{ij}^{-1}( \bm{x}_t^{\epsilon},\mathscr{L}_{\bm{x}_t^{\epsilon}})\epsilon (\bm{v}_{t}^{\epsilon})_{j}\mid\Big)^{2}+2\mathbb {E}\Big(\sup_{0\leq t\leq T}\mid\bm{\gamma}_{ij}^{-1}( \bm{x}_{0},\mathscr{L}_{\bm{x}_{0}})\epsilon (\bm{v}_{0})_{j}\mid\Big)^{2}
    \nonumber\\
    &\leq &C_{T} \mathbb {E}\Big(\sup_{0\leq t\leq T}\mid\epsilon (\bm{v}_{t}^{\epsilon})_{j}\mid^{2}\Big)+ \mathbb {E}\Big(\sup_{0\leq t\leq T}\mid\epsilon (\bm{v}_{0})_{j}\mid^{2}\Big)
    \nonumber\\
        &\leq &C_{T}\Big[\mathbb {E}\Big(\sup_{0\leq t\leq T}\mid\epsilon (\bm{v}_{t}^{\epsilon})_{j}\mid^{2}\Big)^{2}\Big]^{1/2}+\Big[\mathbb {E}\Big(\sup_{0\leq t\leq T}\mid\epsilon (\bm{v}_{0})_{j}\mid^{2}\Big)^{2}\Big]^{1/2}
    \nonumber\\
&\leq &C_{T}\Big[\mathbb {E}\Big(\sup_{0\leq t\leq T}\mid\epsilon (\bm{v}_{t}^{\epsilon})\mid\Big)^{4}\Big]^{1/2}+\Big[\mathbb {E}\Big(\sup_{0\leq t\leq T}\mid\epsilon (\bm{v}_{0})\mid\Big)^{4}\Big]^{1/2}
    \nonumber\\
    &\leq& C_{T}\sqrt{\epsilon}.
\end{eqnarray}
By Doob's maximal inequality, Burkh\"{o}lder inequality,
\begin{eqnarray}
    \mathbb {E}\sup_{0\leq t\leq T}\Big|\int_{0}^{t}(\bm{\sigma}(\bm{x}_{s}^{\epsilon}) {d}\bm{W}_{s}(\epsilon \bm{v}_{s}^{\epsilon})^{*})_{k_{1}k_{2}}\Big|^{2}&\leq&  4\mathbb {E}\Big|\int_{0}^{T}(\bm{\sigma}(\bm{x}_{s}^{\epsilon}) {d}\bm{W}_{s}(\epsilon \bm{v}_{s}^{\epsilon})^{*})_{k_{1}k_{2}}\Big|^{2}
    \nonumber\\
    &\leq&4C \mathbb {E}\int_{0}^{T}\parallel\bm{\sigma}(\bm{x}_{s}^{\epsilon})\parallel^{2}|\epsilon \bm{v}_{s}^{\epsilon}|^{2} {d}s,
    \nonumber
\end{eqnarray}
and by assumption \ref{a2} and Lemma \ref{lem1}, we have
\begin{eqnarray}
 \parallel\bm{\sigma}(\bm{x}_{s}^{\epsilon})\parallel^{2}\leq C_{T},
\end{eqnarray}
and
\begin{eqnarray}
    &&\mathbb {E}|\epsilon \bm{v}_{s}^{\epsilon}|^{2} \leq C_{T}\epsilon.
\end{eqnarray}
Therefore,
\begin{eqnarray}\label{itocon1}
    \mathbb {E}\sup_{0\leq t\leq T}\Big|\int_{0}^{t}(\bm{\sigma}(\bm{x}_{s}^{\epsilon}) {d}\bm{W}_{s}(\epsilon \bm{v}_{s}^{\epsilon})^{*})_{k_{1}k_{2}}\Big|^{2}&\leq&  C_{T}\epsilon,
\end{eqnarray}
similarly,
\begin{eqnarray}\label{itocon2}
\mathbb {E}\sup_{0\leq t\leq T}\Big|\int_{0}^{t}(\epsilon \bm{v}_{s}^{\epsilon}(\bm{\sigma}(\bm{x}_{s}^{\epsilon}) {d}\bm{W}_{s})^{*})_{k_{1}k_{2}}\Big|^{2}&\leq&  C_{T}\epsilon.
\end{eqnarray}
Since $\mid\partial_{x_{l}}\bm{\gamma}_{ij}^{-1}( \bm{x}_{s}^{\epsilon},\mathscr{L}_{\bm{x}_{s}^{\epsilon}})\mid\leq C$ and by Lemma 4.4.2 in \cite{HP:ordieq}
\begin{eqnarray}\label{expbound}
    \mid\int_{0}^{\infty}(e^{-\bm{\gamma}( \bm{x}_{s}^{\epsilon},\mathscr{L}_{\bm{x}_{s}^{\epsilon}})y})_{jk_{1}}(e^{-\bm{\gamma}^{*}( \bm{x}_{s}^{\epsilon},\mathscr{L}_{\bm{x}_{s}^{\epsilon}})y})_{k_{2}l} {d}y\mid
    &\leq&\int_{0}^{\infty}\parallel e^{-\bm{\gamma}( \bm{x}_{s}^{\epsilon},\mathscr{L}_{\bm{x}_{s}^{\epsilon}})y}\parallel\parallel e^{-\bm{\gamma}^{*}( \bm{x}_{s}^{\epsilon},\mathscr{L}_{\bm{x}_{s}^{\epsilon}})y}\parallel {d}y
    \nonumber\\
    &\leq& 2\int_{0}^{\infty} e^{-C_{\lambda_{\bm{\gamma}}}y} {d}y
    \nonumber\\
    &\leq& \frac{2}{C_{\lambda_{\bm{\gamma}}}}  ,
\end{eqnarray}
by Cauchy-schwarz's inequlity, H\"{o}lder inequality and (\ref{itocon1})-(\ref{itocon2}) we have
\begin{eqnarray}\label{i61bound}
   I_{61}  &=&  \mathbb {E}\Big(\sup_{0\leq t\leq T}\Big|(U_{t}^{\epsilon})_{i}\Big|\Big)^{2}
   \nonumber\\
   &=& \mathbb {E}\Big(\sup_{0\leq t\leq T}\Big|
    \int_{0}^{t} [\partial_{x_{l}}\bm{\gamma}_{ij}^{-1}( \bm{x}_{s}^{\epsilon},\mathscr{L}_{\bm{x}_{s}^{\epsilon}})]\times \Big[\int_{0}^{\infty}(e^{-\bm{\gamma}( \bm{x}_{s}^{\epsilon},\mathscr{L}_{\bm{x}_{s}^{\epsilon}})y})_{jk_{1}}(e^{-\bm{\gamma}^{*}( \bm{x}_{s}^{\epsilon},\mathscr{L}_{\bm{x}_{s}^{\epsilon}})y})_{k_{2}l} {d}y \times
    \nonumber\\
    &&\Big((\epsilon \bm{v}_{s}^{\epsilon}F^{*}(\bm{x}_{s}^{\epsilon}))_{k_{1}k_{2}} {d}s+(\epsilon \bm{v}_{s}^{\epsilon}(\bm{\sigma}(\bm{x}_{s}^{\epsilon}) {d}\bm{W}_{s})^{*})_{k_{1}k_{2}}+
    \nonumber\\
    &&(F(\bm{x}_{s}^{\epsilon})(\epsilon \bm{v}_{s}^{\epsilon})^{*})_{k_{1}k_{2}} {d}s+(\bm{\sigma}(\bm{x}_{s}^{\epsilon}) {d}\bm{W}_{s}(\epsilon \bm{v}_{s}^{\epsilon})^{*})_{k_{1}k_{2}}\Big)\Big]\Big|\Big)^{2}
    \nonumber\\
    &\leq& \frac{C}{C_{\lambda_{\bm{\gamma}}}}\Big(\mathbb {E}\sup_{0\leq t\leq T}\Big|
    \int_{0}^{t} (\epsilon \bm{v}_{s}^{\epsilon}F^{*}(\bm{x}_{s}^{\epsilon}))_{k_{1}k_{2}} {d}s\Big|^{2}+\mathbb {E}\sup_{0\leq t\leq T}\Big|\int_{0}^{t}(\epsilon \bm{v}_{s}^{\epsilon}(\bm{\sigma}(\bm{x}_{s}^{\epsilon}) {d}\bm{W}_{s})^{*})_{k_{1}k_{2}}\Big|^{2}+
    \nonumber\\
    &&~~~~~~~~~~~\mathbb {E}\sup_{0\leq t\leq T}\Big|\int_{0}^{t}(F(\bm{x}_{s}^{\epsilon})(\epsilon \bm{v}_{s}^{\epsilon})^{*})_{k_{1}k_{2}}\Big|^{2} {d}s+ \mathbb {E}\sup_{0\leq t\leq T}\Big|\int_{0}^{t}(\bm{\sigma}(\bm{x}_{s}^{\epsilon}) {d}\bm{W}_{s}(\epsilon \bm{v}_{s}^{\epsilon})^{*})_{k_{1}k_{2}}\Big|^{2}\Big)
    \nonumber\\
    &\leq& \frac{C_{T}}{C_{\lambda_{\bm{\gamma}}}}\mathbb {E}
    \int_{0}^{T} \Big|(\epsilon \bm{v}_{s}^{\epsilon}F^{*}(\bm{x}_{s}^{\epsilon}))_{k_{1}k_{2}}\Big|^{2} {d}s+  \mathbb {E}\int_{0}^{T}\Big|(F(\bm{x}_{s}^{\epsilon})(\epsilon \bm{v}_{s}^{\epsilon})^{*})_{k_{1}k_{2}}\Big|^{2} {d}s+C_{T}\epsilon
    \nonumber\\
    &\leq&C_{T}\epsilon.
\end{eqnarray}
By H\"{o}lder inequality, Cauchy-schwarz's inequlity, (\ref{expbound}) and $\mid \partial_{\mu}\bm{\gamma}_{ij}^{-1}( \bm{x}_{s}^{\epsilon},\mathscr{L}_{\bm{x}_{s}^{\epsilon}})(\widetilde{\bm{x}_{s}^{\epsilon}})\mid\leq C$,
\begin{eqnarray}
   I_{62} &=& \mathbb {E}\Big(\sup_{0\leq t\leq T}\Big|\widetilde{\mathbb {E}}\Big[\int_{0}^{t}(\partial_{\mu}\bm{\gamma}_{ij}^{-1}( \bm{x}_{s}^{\epsilon},\mathscr{L}_{\bm{x}_{s}^{\epsilon}})(\widetilde{\bm{x}_{s}^{\epsilon}}))_{l}\times
    \nonumber\\
    &&~~~~~~~~\Big(-\int_{0}^{\infty} (e^{-\bm{\gamma}( \bm{x}_t^{\epsilon},\mathscr{L}_{\bm{x}_t^{\epsilon}})y})_{jk_{1}} (e^{-\bm{\gamma}^{*}( \widetilde{\bm{x}_t^{\epsilon}},\mathscr{L}_{\bm{x}_t^{\epsilon}})y})_{k_{2}l} {d}y
    \nonumber\\
    &&~~~~~~~~~~~~~~~~\times[(\epsilon \bm{v}_{t}^{\epsilon}F^{*}(\widetilde{\bm{x}_t^{\epsilon}}))_{k_{1}k_{2}} {d}t+(\epsilon \bm{v}_{t}^{\epsilon}(\bm{\sigma}(\widetilde{\bm{x}_t^{\epsilon}}) {d}\widetilde{\bm{W}_{t}})^{*})_{k_{1}k_{2}}
    \nonumber\\
    &&~~~~~~~~~~~~~~~~~~~~~+ (F(\bm{x}_t^{\epsilon})(\epsilon \widetilde{\bm{v}_{t}^{\epsilon}})^{*})_{k_{1}k_{2}} {d}t+(\bm{\sigma}(\bm{x}_t^{\epsilon}) {d}\bm{W}_{t}(\epsilon \widetilde{\bm{v}_{t}^{\epsilon}})^{*})_{k_{1}k_{2}}]\Big)\Big]\Big|\Big)^{2}
    \nonumber\\
    &\leq& \frac{C}{C_{\lambda_{\bm{\gamma}}}}\mathbb {E}\Big(\sup_{0\leq t\leq T}\widetilde{\mathbb {E}}\Big|\int_{0}^{t}(\epsilon \bm{v}_{t}^{\epsilon}F^{*}(\widetilde{\bm{x}_t^{\epsilon}}))_{k_{1}k_{2}} {d}t\Big|^{2}+\sup_{0\leq t\leq T}\widetilde{E}\Big|\int_{0}^{t}(\epsilon \bm{v}_{t}^{\epsilon}(\bm{\sigma}(\widetilde{\bm{x}_t^{\epsilon}}) {d}\widetilde{\bm{W}_{t}})^{*})_{k_{1}k_{2}}\Big|^{2}
    \nonumber\\
    &&~~~~~~~~+ \sup_{0\leq t\leq T}\widetilde{\mathbb {E}}\Big|\int_{0}^{t}(F(\bm{x}_t^{\epsilon})(\epsilon \widetilde{\bm{v}_{t}^{\epsilon}})^{*})_{k_{1}k_{2}} {d}t\Big|^{2}+\sup_{0\leq t\leq T}\widetilde{E}\Big|\int_{0}^{t}(\bm{\sigma}(\bm{x}_t^{\epsilon}) {d}\bm{W}_{t}(\epsilon \widetilde{\bm{v}_{t}^{\epsilon}})^{*})_{k_{1}k_{2}}\Big|^{2}\Big)
    \nonumber\\
    &\leq& \frac{C}{C_{\lambda_{\bm{\gamma}}}}\mathbb {E}\Big(\widetilde{\mathbb {E}}\Big|\int_{0}^{T}(\epsilon \bm{v}_{t}^{\epsilon}F^{*}(\widetilde{\bm{x}_t^{\epsilon}}))_{k_{1}k_{2}} {d}t\Big|^{2}+\widetilde{\mathbb {E}}\sup_{0\leq t\leq T}\Big|\int_{0}^{t}(\epsilon \bm{v}_{t}^{\epsilon}(\bm{\sigma}(\widetilde{\bm{x}_t^{\epsilon}}) {d}\widetilde{\bm{W}_{t}})^{*})_{k_{1}k_{2}}\Big|^{2}
    \nonumber\\
    &&~~~~~~~~+ \widetilde{\mathbb {E}}\Big|\int_{0}^{T}(F(\bm{x}_t^{\epsilon})(\epsilon \widetilde{\bm{v}_{t}^{\epsilon}})^{*})_{k_{1}k_{2}} {d}t\Big|^{2}+\widetilde{\mathbb {E}}\sup_{0\leq t\leq T}\Big|\int_{0}^{t}(\bm{\sigma}(\bm{x}_t^{\epsilon}) {d}\bm{W}_{t}(\epsilon \widetilde{\bm{v}_{t}^{\epsilon}})^{*})_{k_{1}k_{2}}\Big|^{2}\Big),
    \nonumber
\end{eqnarray}
hence, by H\"{o}lder inequality, Lemma \ref{lem1}, assumption \ref{a2}, Doob's maximal inequality, Burkh\"{o}lder inequality, we have
\begin{eqnarray}\label{i62bound}
    I_{62} &\leq& \frac{C}{C_{\lambda_{\bm{\gamma}}}}\mathbb {E}\Big(\widetilde{\mathbb {E}}\int_{0}^{T}\Big|(\epsilon \bm{v}_{t}^{\epsilon}F^{*}(\widetilde{\bm{x}_t^{\epsilon}}))_{k_{1}k_{2}}\Big|^{2} {d}t+\widetilde{\mathbb {E}}\sup_{0\leq t\leq T}\Big|\int_{0}^{t}(\epsilon \bm{v}_{t}^{\epsilon}(\bm{\sigma}(\widetilde{\bm{x}_t^{\epsilon}}) {d}\widetilde{\bm{W}_{t}})^{*})_{k_{1}k_{2}}\Big|^{2}
    \nonumber\\
    &&+ \widetilde{\mathbb {E}}\int_{0}^{T}\Big|(F(\bm{x}_t^{\epsilon})(\epsilon \widetilde{\bm{v}_{t}^{\epsilon}})^{*})_{k_{1}k_{2}}\Big|^{2} {d}t+\widetilde{\mathbb {E}}\sup_{0\leq t\leq T}\Big|\int_{0}^{t}(\bm{\sigma}(\bm{x}_t^{\epsilon}) {d}\bm{W}_{t}(\epsilon \widetilde{\bm{v}_{t}^{\epsilon}})^{*})_{k_{1}k_{2}}\Big|^{2}\Big)
    \nonumber\\
    &\leq& \frac{C}{C_{\lambda_{\bm{\gamma}}}}\Big(\int_{0}^{T}\mathbb {E}|\epsilon \bm{v}_{t}^{\epsilon}|^{2}\widetilde{\mathbb {E}}|F^{*}(\widetilde{\bm{x}_t^{\epsilon}})|^{2} {d}t+\int_{0}^{T}E|F(\bm{x}_t^{\epsilon})|^{2}\widetilde{\mathbb {E}}\mid(\epsilon \widetilde{\bm{v}_{t}^{\epsilon}})^{*}\mid^{2} {d}t
    \nonumber\\
    &&+ 4E(\widetilde{\mathbb {E}}\Big|\int_{0}^{T}(\epsilon \bm{v}_{t}^{\epsilon}(\bm{\sigma}(\widetilde{\bm{x}_t^{\epsilon}}) {d}\widetilde{\bm{W}_{t}})^{*})_{k_{1}k_{2}}\Big|^{2})+\widetilde{\mathbb {E}}(\mathbb {E}\Big|\int_{0}^{T}(\bm{\sigma}(\bm{x}_t^{\epsilon}) {d}\bm{W}_{t}(\epsilon \widetilde{\bm{v}_{t}^{\epsilon}})^{*})_{k_{1}k_{2}}\Big|^{2})
    \nonumber\\
     &\leq& \frac{C}{C_{\lambda_{\bm{\gamma}}}}\Big(C_{T}\epsilon
    + 4\mathbb {E}(\widetilde{\mathbb {E}}\int_{0}^{T}|\epsilon \bm{v}_{t}^{\epsilon}|^{2}|(\bm{\sigma}(\widetilde{\bm{x}_t^{\epsilon}}))^{*}|^{2} {d}t)+\widetilde{\mathbb {E}}(\mathbb {E}\int_{0}^{T}|(\bm{\sigma}(\bm{x}_t^{\epsilon})|^{2}|(\epsilon \widetilde{\bm{v}_{t}^{\epsilon}})^{*}|^{2} {d}t)
    \nonumber\\
    &\leq& \frac{C_{T}}{C_{\lambda_{\bm{\gamma}}}}\epsilon.
\end{eqnarray}
Therefore, combining (\ref{i61bound}) and (\ref{i62bound}) we have
\begin{eqnarray}
    I_{6} &:=& I_{61}+I_{62}\leq  C_{T}\epsilon.
\end{eqnarray}
 {For $I_{7}$},
\begin{eqnarray}
   {I_{7}} &\leq&  2 \mathbb {E}\sup_{0\leq t\leq T} \Big|\int_{0}^{t} [\partial_{x_{l}}\bm{\gamma}_{ij}^{-1}( \bm{x}_{s}^{\epsilon},\mathscr{L}_{\bm{x}_{s}^{\epsilon}})]\times
    \nonumber\\
    &&~~~~~~~~\Big[-\int_{0}^{\infty}(e^{-\bm{\gamma}( \bm{x}_{s}^{\epsilon},\mathscr{L}_{\bm{x}_{s}^{\epsilon}})y})_{jk_{1}}(e^{-\bm{\gamma}^{*}( \bm{x}_{s}^{\epsilon},\mathscr{L}_{\bm{x}_{s}^{\epsilon}})y})_{k_{2}l} {d}y\Big] {d}[(\epsilon \bm{v}_{s}^{\epsilon})_{k_{1}}(\epsilon \bm{v}_{s}^{\epsilon})_{k_{2}}]\Big|^2
    \nonumber\\
    &&~~~~+ 2 \mathbb {E}\sup_{0\leq t\leq T}  \Big[ \widetilde{\mathbb {E}}\int_{0}^{t}(\partial_{\mu}\bm{\gamma}_{ij}^{-1}( \bm{x}_{s}^{\epsilon},\mathscr{L}_{\bm{x}_{s}^{\epsilon}})(\widetilde{\bm{x}_{s}^{\epsilon}}))_{l}\times
    \nonumber\\
    &&~~~~~~~~\Big(-\int_{0}^{\infty} (e^{-\bm{\gamma}( \bm{x}_t^{\epsilon},\mathscr{L}_{\bm{x}_t^{\epsilon}})y})_{jk_{1}}(e^{-\bm{\gamma}^{*}( \widetilde{\bm{x}_t^{\epsilon}},\mathscr{L}_{\bm{x}_t^{\epsilon}})y})_{k_{2}l} {d}y {d}[(\epsilon \bm{v}_{t}^{\epsilon})_{k_{1}}(\epsilon \widetilde{\bm{v}_{t}^{\epsilon}})^{*}_{k_{2}}]\Big)\Big]^{2}\nonumber\\
     &:=& I_{71}+I_{72}.
\end{eqnarray}
For $I_{71}$, using (\ref{expbound}), assumption \ref{a3} and Lemma \ref{pconver},
$$I_{71} \leq \frac{8C^2}{C^2_{\lambda_{\bm{\gamma}}}} \mathbb {E}\sup_{0\leq t\leq T}  \Big|\int_{0}^{t} {d}[(\epsilon \bm{v}_{s}^{\epsilon}) (\epsilon \bm{v}_{s}^{\epsilon}) ]\Big|^2
\leq \frac{8C^2}{C^2_{\lambda_{\bm{\gamma}}}} \mathbb {E}\sup_{0\leq t\leq T} [\epsilon^2 |\bm{v}_{t}^{\epsilon}|^2- \epsilon^2|v_0|^2] \leq C_T \sqrt{\epsilon }.$$
For $I_{72}$, defining
\begin{eqnarray}
     &&H^\epsilon_t := \widetilde{\mathbb {E}}\Big[\int_{0}^{t}(\partial_{\mu}\bm{\gamma}_{ij}^{-1}( \bm{x}_{s}^{\epsilon},\mathscr{L}_{\bm{x}_{s}^{\epsilon}})(\widetilde{\bm{x}_{s}^{\epsilon}}))_{l}\times
    \nonumber\\
    \Big(&-&\int_{0}^{\infty} (e^{-\bm{\gamma}( \bm{x}_{s}^{\epsilon},\mathscr{L}_{\bm{x}_{s}^{\epsilon}})y})_{jk_{1}}(e^{-\bm{\gamma}^{*}( \widetilde{\bm{x}_{s}^{\epsilon}},\mathscr{L}_{\bm{x}_{s}^{\epsilon}})y})_{k_{2}l} {d}y {d}[(\epsilon \bm{v}_{s}^{\epsilon})_{k_{1}}(\epsilon \widetilde{\bm{v}_{s}^{\epsilon}})^{*}_{k_{2}}]\Big)\Big].
    \nonumber
\end{eqnarray}
    \nonumber
Considering the product space $(\Omega\otimes\Omega,\mathscr{F}\otimes\mathscr{F},P\otimes P)$ and
defining
$$\widetilde{\bm{x}_{s}^{\epsilon}}(\omega,\widetilde{\omega}):=\bm{x}_{s}^{\epsilon}(\omega),~~\widetilde{\widetilde{\bm{x}_{s}^{\epsilon}}}(\omega,\widetilde{\omega}):=\widetilde{\bm{x}_{s}^{\epsilon}}(\widetilde{\omega}),$$
we still use $\bm{x}_{s}^{\epsilon}$ and $\widetilde{\bm{x}_{s}^{\epsilon}}$ to denote $\widetilde{\bm{x}_{s}^{\epsilon}}(\omega,\widetilde{\omega})$ and $\widetilde{\widetilde{\bm{x}_{s}^{\epsilon}}}(\omega,\widetilde{\omega})$ for convenience. Furthermore, defining
\begin{eqnarray}
    f(\bm{x}_{s}^{\epsilon},\widetilde{\bm{x}_{s}^{\epsilon}},\mathscr{L}_{\bm{x}_{s}^{\epsilon}}) &=& (\partial_{\mu}\bm{\gamma}_{ij}^{-1}( \bm{x}_{s}^{\epsilon},\mathscr{L}_{\bm{x}_{s}^{\epsilon}})(\widetilde{\bm{x}_{s}^{\epsilon}}))_{l}\times\Big(-\int_{0}^{\infty} (e^{-\bm{\gamma}( \bm{x}_{s}^{\epsilon},\mathscr{L}_{\bm{x}_{s}^{\epsilon}})y})_{jk_{1}}(e^{-\bm{\gamma}^{*}( \widetilde{\bm{x}_{s}^{\epsilon}},\mathscr{L}_{\bm{x}_{s}^{\epsilon}})y})_{k_{2}l} {d}y,
    \nonumber
\end{eqnarray}
using assumption \ref{a3} and (\ref{expbound}), we get
$$    |f(\bm{x}_{s}^{\epsilon},\widetilde{\bm{x}_{s}^{\epsilon}},\mathscr{L}_{\bm{x}_{s}^{\epsilon}})|^2 \leq C  .$$
Then
\begin{eqnarray}
 \mathbb {E}\sup_{0\leq t\leq T} H^\epsilon_t \leq  C\mathbb {E}\sup_{0\leq t\leq T}  \widetilde{\mathbb {E}}  \int_{0}^{t} {d}[(\epsilon \bm{v}_{s}^{\epsilon})_{k_{1}}(\epsilon \widetilde{\bm{v}_{s}^{\epsilon}})^{*}_{k_{2}}].
\end{eqnarray}
Note that
\begin{eqnarray}
 (\mathbb {E}\times \widetilde{\mathbb {E}})\Big(\sup_{0\leq t\leq T} \mid \epsilon \bm{v}_{t}^{\epsilon}\epsilon\widetilde{\bm{v}_{t}^{\epsilon}} \mid\Big)^{2}&\leq&\frac{1}{2}\Big((\mathbb {E}\times \widetilde{\mathbb {E}})\Big(\sup_{0\leq t\leq T} \mid \epsilon \bm{v}_{t}^{\epsilon} \mid\Big)^{4}+(\mathbb {E}\times \widetilde{\mathbb {E}})\Big(\sup_{0\leq t\leq T} \mid \epsilon\widetilde{\bm{v}_{t}^{\epsilon}} \mid\Big)^{4}\Big)
    \nonumber\\
    &\leq&\frac{1}{2}\Big(\mathbb {E}\Big(\sup_{0\leq t\leq T} \mid \epsilon \bm{v}_{t}^{\epsilon} \mid\Big)^{4}+\widetilde{\mathbb {E}}\Big(\sup_{0\leq t\leq T} \mid \epsilon\widetilde{\bm{v}_{t}^{\epsilon}} \mid\Big)^{4}\Big)
    \nonumber\\
    &\leq& C\epsilon,
    \nonumber
\end{eqnarray}
where $(\mathbb {E}\times \widetilde{\mathbb {E}})[\cdot] = \int_{\Omega\times\widetilde{\Omega}}(\cdot) {d}(P\times\widetilde{P})$.
Then
$$I_{72}  \leq \epsilon C_T, $$
and
$$I_{7}  \leq \sqrt{\epsilon} C_T. $$
Therefore,
\begin{eqnarray}
   \mathbb {E}\sup_{0\leq t\leq T}\mid\bm{x}_t^{\epsilon} -\bm{x}_{t} \mid^{2} \leq   C \sqrt{\epsilon} + C_T \int_{0}^{T}\mathbb {E}\sup_{0\leq r\leq t}|\bm{x}_{r}^{\epsilon}-\bm{x}_{r}^{\epsilon}|^{2} {d}t,
\end{eqnarray}
using Gronwall inequality
$$    \mathbb {E}\sup_{0\leq t\leq T}\mid\bm{x}_t^{\epsilon} -\bm{x}_{t} \mid^{2} \leq C\sqrt{\epsilon} ,$$
that is
$$  \lim_{\epsilon \rightarrow 0 }  \mathbb {E}\sup_{0\leq t\leq T}\mid\bm{x}_t^{\epsilon} -\bm{x}_{t} \mid^{2}  = 0 .$$

\end{proof}

\section{Extension}
  \setcounter{equation}{0}
  \renewcommand{\theequation}
{5.\arabic{equation}}
We first remark a extension of the results in this paper by setting general nonlinear reaction and noise  by $ \bm{F}( \bm{x}_{s}^{\epsilon},\mathscr{L}_{\bm{x}_{s}^{\epsilon}}) $  and  $\bm{\sigma}(\bm{x}_t^{\epsilon}, \mathscr{L}_{\bm{x}_t^{\epsilon}}) $, in which $\bm{F}$ and $\bm{\sigma}$ are state-dependent and distribution-dependent.
\begin{eqnarray}\label{mainextension}
\left\{\begin{array}{ll}
  {d} \bm{x}_t^{\epsilon} = \bm{v}_{t}^{\epsilon} {d}t,\\
  {d}\bm{v}_{t}^{\epsilon} = \frac{1}{\epsilon}F(\bm{x}_t^{\epsilon}, \mathscr{L}_{\bm{x}_t^{\epsilon}}) {d}t-\frac{1}{\epsilon}\bm{\gamma}( \bm{x}_t^{\epsilon},\mathscr{L}_{\bm{x}_t^{\epsilon}}) \bm{v}_{t}^{\epsilon} {d}t+\frac{1}{\epsilon}\bm{\sigma}(\bm{x}_t^{\epsilon}, \mathscr{L}_{\bm{x}_t^{\epsilon}}) {d}\bm{W}_{t},
\end{array}
\right.
\end{eqnarray}
Suppose that assumptions \ref{a1}-\ref{a4} still hold,  and we rewrite assumptions \ref{a1}-\ref{a2} as follows,

\begin{assumption}\label{a1-1}
\begin{eqnarray}\label{e1-2}
\mid \bm{F}(\bm{x}_{1}, \mu_1)-\bm{F}(\bm{x}_{2}, \mu_2)\mid+\parallel \bm{\sigma}(\bm{x}_{1}, \mu_1)-\bm{\sigma}(\bm{x}_{2}, \mu_2)\parallel
\leq C_{T}\Big[\mid \bm{x}_{1}-\bm{x}_{2}\mid +\mathbb{W}(\mu_{1},\mu_{2})\Big] ;
\end{eqnarray}
\end{assumption}

\begin{assumption}\label{a2-2}
\begin{eqnarray}\label{e2-2}
\mid \bm{F}(\bm{x}, \mu)\mid^{2}+\parallel \bm{\sigma}(\bm{x}, \mu)\parallel^{2}
\leq C_{T}\Big[1+\mid \bm{x}\mid^{2} + \mu(\cdot)\Big];
\end{eqnarray}
\end{assumption}
Now, for the system~(\ref{mainextension}), we have the following result.

\begin{theorem}\label{thm-extend}
    Assume that assumptions \ref{a3}-\ref{a4}  and  assumptions \ref{a1-1}-\ref{a2-2}   hold. $\bm{x}_t^{\epsilon}$ is the solution of the equation~(\ref{mainextension}),  for any $T>0$, and $\bm{x}_0,\bm{v}_0\in \mathbb{R}^{d}$,
    \begin{eqnarray}
        \mathbb{E}(\sup_{0\leq t\leq T}\mid \bm{x}_t^{\epsilon}-\bm{x}_{t}\mid^2)\leq C\sqrt{\epsilon},
    \end{eqnarray}
    where $C$ is a constant depending on $T$ , $\bm{x}_0$ and $\bm{v}_0$.
     Here, $\bm{x}_{t}$ is the solution of the following limiting equation
$$
  {d}\bm{x}_{t} = [\bm{\gamma}^{-1}(\bm{x}_{t},\mathscr{L}_{\bm{x}_{t}})F(\bm{x}_{t},\mathscr{L}_{\bm{x}_{t}})+S( \bm{x}_{t},\mathscr{L}_{\bm{x}_{t}}) +\widetilde{S}(\bm{x}_{t},\mathscr{L}_{\bm{x}_{t}})] {d}t+\bm{\gamma}^{-1}( \bm{x}_{t},\mathscr{L}_{\bm{x}_{t}}) \bm{\sigma}(\bm{x}_{t}, \mathscr{L}_{\bm{x}_{t}}) {d}\bm{W}_{t}
$$
where $S_{i}(\bm{x},\mu) = \frac{\partial}{\partial x_{l}}[\bm{\gamma}^{-1}_{ij}(\bm{x},\mu)]J_{jl}(\bm{x},\mu)$ and $J$ solves the Lyapunov equation
$$
    J(\bm{x},\mu)\bm{\gamma}^{*}(\bm{x},\mu)+\bm{\gamma}(\bm{x},\mu)J(\bm{x},\mu) = \bm{\sigma}(\bm{x}, \mu)\bm{\sigma}^{*}(\bm{x}, \mu),
$$
and $\widetilde{S}_{i}(\bm{x},\mu) = \widetilde{E}[(\partial_{\mu}\bm{\gamma}^{-1}_{ij}(\bm{x},\mu)(\widetilde{\bm{x}}))_{l}\widetilde{J}_{jl}(\bm{x},\widetilde{\bm{x}},\mu)]$ and $\widetilde{J}$ solves the Sylevster equation
$$
    \widetilde{J}(\bm{x},\widetilde{\bm{x}},\mu)\bm{\gamma}^{*}(\widetilde{\bm{x}},\mu)+\bm{\gamma}(\bm{x},\mu)\widetilde{J}(\bm{x},\widetilde{\bm{x}},\mu) = \bm{\sigma}(\bm{x}, \mu)\bm{\sigma}^{*}(\widetilde{\bm{x}}, \mu).
$$
\end{theorem}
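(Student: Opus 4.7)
The plan is to follow the three-step architecture of the proof of Theorem~\ref{mainthm} and track where the additional measure dependence of $F$ and $\sigma$ enters. First, I would check that the a priori moment bounds and tightness assertions (Lemma~\ref{lem1}, Lemma~\ref{l2vcon}, Lemma~\ref{pconver}) and the well-posedness result (Theorem~\ref{thm1}) carry over unchanged. Their proofs depended on the Lipschitz bound~\eqref{e1} and the linear growth bound~\eqref{e2}; the upgraded Assumptions~\ref{a1-1}-\ref{a2-2} are strictly stronger, and combined with the already-established $\sup_{\epsilon, s \leq T} \mathbb{E}|\bm{x}_s^\epsilon|^2 \leq C_T$ they give $\mathbb{E}[|F(\bm{x}_s^\epsilon, \mathscr{L}_{\bm{x}_s^\epsilon})|^2 + \|\sigma(\bm{x}_s^\epsilon, \mathscr{L}_{\bm{x}_s^\epsilon})\|^2] \leq C_T(1 + \mathbb{E}|\bm{x}_s^\epsilon|^2)$, which is exactly the input used in the variation-of-constants estimates. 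In the fixed-point argument of Theorem~\ref{thm1}, the Lipschitz-in-measure estimate of the nonlinearity now acquires extra $F$- and $\sigma$-contributions, but by~\eqref{e1-2} these are again bounded by $W_1(\rho_1, \rho_2) + |z_1 - z_2|$, so the same contraction argument closes.

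Second, I would reproduce the decomposition $\bm{x}_t^\epsilon = \bm{x}_t^{\epsilon,1} + \bm{x}_t^{\epsilon,2}$ displayed in~\eqref{xe1}-\eqref{xe2}. The derivation via integration by parts on the $\bm{v}_t^\epsilon$-equation and the It\^o formula for functions of measures is entirely mechanical: $F(\bm{x}_s^\epsilon, \mathscr{L}_{\bm{x}_s^\epsilon})$ and $\sigma(\bm{x}_s^\epsilon, \mathscr{L}_{\bm{x}_s^\epsilon})$ appear verbatim, and the Lyapunov and Sylvester equations defining $J$ and $\widetilde J$ now have right-hand sides $\sigma(\bm{x},\mu)\sigma^*(\bm{x},\mu)$ and $\sigma(\bm{x},\mu)\sigma^*(\widetilde{\bm{x}},\mu)$. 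The exponential representations of $J$ and $\widetilde J$ and the uniform estimate $\|e^{-\bm{\gamma}(\cdot,\cdot)y}\| \leq e^{-C_{\lambda_{\bm{\gamma}}} y}$ are preserved because Assumption~\ref{a4} is unchanged.

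Third, I would estimate the seven terms $I_1, \dots, I_7$ arising in $\mathbb{E}|(\bm{x}_t^\epsilon)_i - (\bm{x}_t)_i|^2$ exactly as in the proof of Theorem~\ref{mainthm}. The only substantive change is in the splittings for $I_1, I_2, I_3, I_4$: the differences $F(\bm{x}_s^\epsilon, \mathscr{L}_{\bm{x}_s^\epsilon}) - F(\bm{x}_s, \mathscr{L}_{\bm{x}_s})$, $\sigma(\bm{x}_s^\epsilon, \mathscr{L}_{\bm{x}_s^\epsilon}) - \sigma(\bm{x}_s, \mathscr{L}_{\bm{x}_s})$, and correspondingly $J(\bm{x}_s^\epsilon, \mathscr{L}_{\bm{x}_s^\epsilon}) - J(\bm{x}_s, \mathscr{L}_{\bm{x}_s})$, $\widetilde J(\bm{x}_s^\epsilon, \widetilde{\bm{x}_s^\epsilon}, \mathscr{L}_{\bm{x}_s^\epsilon}) - \widetilde J(\bm{x}_s, \widetilde{\bm{x}_s}, \mathscr{L}_{\bm{x}_s})$, now pick up extra Wasserstein contributions. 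Using~\eqref{e1-2} and the corresponding Lipschitz-in-measure estimate for $\sigma\sigma^*$ inside the exponential kernels, each of these is controlled by $C_T[|\bm{x}_s^\epsilon - \bm{x}_s|^2 + W^2(\mathscr{L}_{\bm{x}_s^\epsilon}, \mathscr{L}_{\bm{x}_s})]$, and the trivial coupling $(\bm{x}_s^\epsilon, \bm{x}_s)$ yields $W^2(\mathscr{L}_{\bm{x}_s^\epsilon}, \mathscr{L}_{\bm{x}_s}) \leq \mathbb{E}|\bm{x}_s^\epsilon - \bm{x}_s|^2$, so all four contributions feed into the Gronwall integrand just as in the original proof. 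The small-mass residuals $I_5, I_6, I_7$ yield $C_T\sqrt{\epsilon}$ by the same Burkh\"older/Doob/Lemma~\ref{pconver} arguments, because only products involving $\epsilon \bm{v}_t^\epsilon$ (or its tilded counterpart) appear in them and the estimate $\mathbb{E}|\epsilon \bm{v}_t^\epsilon|^2 \leq C_T\epsilon$ is unchanged under the new assumptions.

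The main obstacle is not conceptual but bookkeeping: one must verify that every appearance of $\sigma(\bm{x})$ or $F(\bm{x})$ in the original proof, when replaced by $\sigma(\bm{x}, \mu)$ or $F(\bm{x}, \mu)$, generates only Wasserstein-Lipschitz remainders that are absorbed into $\int_0^t \mathbb{E}\sup_{r\leq s}|\bm{x}_r^\epsilon - \bm{x}_r|^2\,ds$, and in particular that the perturbation of $J,\widetilde J$ propagates harmlessly through the exponential kernels. Once this is confirmed term by term, Gronwall's inequality delivers $\mathbb{E}\sup_{0\leq t\leq T}|\bm{x}_t^\epsilon - \bm{x}_t|^2 \leq C\sqrt{\epsilon}$ with $C$ depending on $T$, $\bm{x}_0$ and $\bm{v}_0$, as claimed.
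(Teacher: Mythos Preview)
Your proposal is correct and follows precisely the approach the paper takes: the paper's own proof of Theorem~\ref{thm-extend} consists of the single sentence ``Using the same method as Theorem~\ref{mainthm}, it is easily to obtain this result,'' and your plan is exactly a careful unpacking of what that entails. In fact you supply considerably more detail than the paper does, correctly identifying that the only changes are the extra Wasserstein terms in $I_1$--$I_4$ (absorbed via $W^2(\mathscr{L}_{\bm{x}_s^\epsilon},\mathscr{L}_{\bm{x}_s})\leq \mathbb{E}|\bm{x}_s^\epsilon-\bm{x}_s|^2$) and the verbatim replacement of $F(\bm{x})$, $\sigma(\bm{x})$ by their measure-dependent counterparts in the Lyapunov/Sylvester right-hand sides.
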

\begin{proof}
Using the same method as Theorem \ref{mainthm}, it is easily to obtain this result.

\end{proof}

\end{document}